\newcommand{\OO}{\mathcal{O}}
\newcommand{\F}{\mathbb{F}}
\newcommand{\Q}{\mathbb{Q}}
\newcommand{\Z}{\mathbb{Z}}
\newcommand{\pp}{\mathfrak{p}}
\newcommand{\calO}{\mathcal{O}}
\DeclareMathOperator{\Aut}{Aut}
\DeclareMathOperator{\End}{End}
\DeclareMathOperator{\Hom}{Hom}
\DeclareMathOperator{\lcm}{lcm}
\DeclareMathOperator{\norm}{norm}
\DeclareMathOperator{\N}{N}
\DeclareMathOperator{\Spec}{Spec}
\DeclareMathOperator{\tr}{tr}
\DeclareMathOperator{\Trd}{Trd}
\DeclareMathOperator{\Nrd}{Nrd}
\newcommand{\QQ}{\mathbb{Q}}
\DeclareMathOperator{\Fqbar}{\overline{\F}_q}
\DeclareMathOperator{\Def}{\overset{{}_{\text{def}}}{=}}
\newcommand{\isom}{\simeq}
\newtheorem{theorem}{Theorem}[section]
\newtheorem{lemma}[theorem]{Lemma}
\newtheorem{corollary}[theorem]{Corollary}
\newtheorem{proposition}[theorem]{Proposition}
\theoremstyle{definition}
\newtheorem{definition}[theorem]{Definition}
\newtheorem{example}[theorem]{Example}
\newtheorem{remark}[theorem]{Remark}
\numberwithin{equation}{subsection}
\newcommand\DoubleLine[7][4pt]{%
	\path(#2)--(#3)coordinate[at start](h1)coordinate[at end](h2);
	\draw[#4]($(h1)!#1!90:(h2)$)-- node [auto=left] {#5} ($(h2)!#1!-90:(h1)$); 
	\draw[#6]($(h1)!#1!-90:(h2)$)-- node [auto=right] {#7} ($(h2)!#1!90:(h1)$);
}
\begin{document}
	\title[Cycles in the supersingular $\ell$-isogeny graph]{Cycles in the supersingular $\ell$-isogeny graph and
          corresponding endomorphisms}
	
	\author[Bank]{Efrat Bank}
	\address{Department of Mathematics, University of Michigan, Ann Arbor, MI, USA}
	\email{ebank@umich.edu}
	\urladdr{\url{http://www-personal.umich.edu/~ebank/}}
	
	\author[Camacho-Navarro]{Catalina Camacho-Navarro}
	\address{Department of Mathematics\\ Colorado State University\\Fort Collins, CO, 80523, USA}
	\email{camacho@math.colostate.edu}
	
	\author[Eisentr\"ager]{Kirsten Eisentr\"ager}
\address{Department of Mathematics \\ The Pennsylvania State
  University \\ University, Park, PA 16802, USA}
  \email{eisentra@math.psu.edu}
  \urladdr{http://www.personal.psu.edu/kxe8/}	
	
	\author[Morrison]{Travis Morrison}
\address{Institute for Quantum Computing \\ The University of Waterloo
  \\ Waterloo, ON }
  \email{travis.morrison@uwaterloo.ca}

	\author[Park]{Jennifer Park}
	\address{Department of Mathematics, University of Michigan, Ann Arbor, MI, USA}
	\email{jmypark@umich.edu}
	\urladdr{\url{http://www-personal.umich.edu/~jmypark/}}
	
\maketitle	
\begin{abstract}
  We study the problem of generating the endomorphism ring of a
  supersingular elliptic curve by two cycles in $\ell$-isogeny
  graphs. We prove a necessary and sufficient condition for the two
  endomorphisms corresponding to two cycles to be linearly
  independent, expanding on the work by Kohel in his thesis. We also
  give a criterion under which the ring generated by two cycles is
  not a maximal order. We give some examples in which we compute
  cycles which generate the full endomorphism ring. The most difficult
  part of these computations is the calculation of the trace of these
  cycles. We show that a generalization of Schoof's algorithm can
  accomplish this computation efficiently.
\end{abstract}

\section{Introduction}\label{sec: introduction}	
The currently used cryptosystems, such as RSA and systems based on
Elliptic Curve Cryptography (ECC), are known to be broken by quantum
computers. However, it is not known whether cryptosystems based on the
hardness of computing endomorphism rings or isogenies between
supersingular elliptic curves can be broken by quantum computers.
Because of this, these systems have been studied intensely over the
last few years, and the International Post-Quantum Cryptography
Competition sponsored by NIST~\cite{PQCS} has further increased
interest in studying the security of these systems. There is a
submission under consideration~\cite{SIKE} which is based on
supersingular isogenies.

Cryptographic applications based on the hardness of computing
isogenies between supersingular elliptic curves were first given
in~\cite{CLG2009}. In this paper, Charles, Goren, and Lauter constructed a
hash function from the $\ell$-isogeny graph of supersingular elliptic
curves, and finding preimages for the hash function is connected to
finding certain $\ell$-power isogenies (for a small prime $\ell$)
between supersingular elliptic curves. 

More recently, De Feo, Jao, and Pl\^ut~\cite{DFJLP14} proposed
post-quantum key-exchange and encryption schemes based on
computing isogenies of supersingular elliptic curves.  A signature scheme
based on supersingular isogenies is
given in~\cite{YAJJS}, and~\cite{GPS2016} gives a signature scheme in
which the 
secret key is a maximal order isomorphic to the endomorphism ring of a
supersingular elliptic curve.

There are currently no subexponential classical or quantum attacks for
these systems.  However, under some heuristic assumptions, the
quaternion analogue for the underlying hardness assumption of the hash
function in~\cite{CLG2009} was broken in~\cite{KLPT}, which suggests
that a careful study of the isogenies and endomorphism rings of
supersingular elliptic curves is necessary.

For a fixed $q$, \cite{Cer14, ML04} list all isomorphism classes of
supersingular elliptic curves over $\F_q$ along with their maximal
orders in a quaternion algebra (which was improved in \cite[\S
5.2]{CG14}). In~\cite{Mur14} McMurdy also computes explicit
endomorphism rings for some supersingular elliptic curves. The problem of computing isogenies between supersingular elliptic
curves over $p$ has been studied, both in the classical
setting~\cite[Section 4]{GD16} where the complexity of the algorithm
is $\tilde{O}(p^{1/2})$, and in the quantum setting~\cite{BJS14},
where the complexity is $\tilde{O}(p^{1/4})$. In fact, computing the
endomorphism ring of a supersingular elliptic curve is deeply
connected to computing isogenies between
supersingular elliptic curves, as shown by \cite{Kohel}. Heuristic
arguments show that these two problems are equivalent~\cite{GPS2016,KLPT,EHLMP}.

In this paper, we work over finite fields $\F_q$ of characteristic
$p$, and study the problem of generating the endomorphism ring of a
supersingular elliptic curve $E$ by two cycles in the {\em
  $\ell$-isogeny graph} (Definition \ref{deg: isogeny graph}) of
supersingular elliptic curves. Computing the endomorphism ring of a
supersingular elliptic curve via $\ell$-isogeny graphs was first
studied by Kohel~\cite[Theorem 75]{Kohel}, who gave an approach for
finding four linearly independent endomorphisms that generate a
finite-index suborder of $\End(E)$ by finding cycles in the
$\ell$-isogeny graph. 
The running time of the probabilistic algorithm is $O(p^{1 +
  \varepsilon})$. We demonstrate some obstructions to generating the
full endomorphism ring with two cycles $\alpha, \beta \in \End(E)$.

Expanding on \cite{Kohel}, we prove in Theorem~\ref{thm:independent
  endos} the necessary and sufficient conditions for $\alpha$ and $\beta$
to be linearly independent. We also prove sufficient conditions for when $\alpha$
and $\beta$ generate a proper suborder of $\End(E)$ in
Theorem~\ref{thm:nonmaximal order}, then compute some examples. In
order to do this, we need to detect when the
order generated by two cycles is isomorphic to another given order; in
\S\ref{sec: endo rings computation}, we give a criterion that reduces this problem to computing traces of
various endomorphisms. In the appendix we give a generalization of
Schoof's algorithm~\cite{Schoof} (using the improvements from \cite{SS15}) and show that the trace of an
arbitrary endomorphism of norm $\ell^e$ can be computed in time
polynomial in $e, \ell$ and $\log p$.

The paper is organized as follows. In \S\ref{sec: isogeny
  graphs} we review some definitions about elliptic curves and define
isogeny graphs. In \S\ref{sec: endo rings computation} we
discuss some background on quaternion algebras, the Deuring
correspondence, and we discuss how to compute the endomorphism ring of
a supersingular elliptic curve from cycles in the supersingular $\ell$-isogeny
graph. In \S\ref{sec: sufficient conditions} we give 
a necessary and sufficient condition for two endomorphisms to be linearly independent,
expanding on a result by Kohel~\cite{Kohel}. In \S\ref{sec:
  obstructions} we give conditions under which two cycles $\alpha,
\beta$ in the supersingular $\ell$-isogeny graph generate a proper suborder of the
endomorphism ring. In \S\ref{sec: examples} we compute some
examples, and in the Appendix we give the generalization of Schoof's algorithm.

\subsection*{Acknowledgments} We are deeply grateful to John Voight
for several helpful discussions, and for providing us with some code
that became a part of the code that generated the computational
examples in Section~\ref{sec: examples}. We also thank Sean Hallgren and
Rachel Pries for helpful discussions. We thank Andrew Sutherland for
many comments on an earlier version that led to a significant improvement in the
running time analysis of the generalization of Schoof's algorithm, and for outlining the proofs of Proposition~\ref{prop:endsdefined} and Lemma~\ref{lemma:ell-torsion}. Finally,
we thank the Women in Numbers 4 conference and BIRS, for enabling us
to start this project in a productive environment. K.E. and T.M. were
partially supported by National Science Foundation awards DMS-1056703
and CNS-1617802.


\section{Isogeny graphs}\label{sec: isogeny graphs}
\subsection{Definitions and properties}\label{sec: defs and notation}
In this section, we recall several definitions and notation that are
used throughout. We refer the reader to \cite{AEC} and \cite{Kohel} for a detailed overview on some of the below. Let $k$ be a field of characteristic $p>3$.

By an elliptic curve $E$ over a field $k$, we
mean the projective curve with an affine model $E:y^2=x^3+Ax+B$ for some $A,B\in k$. The
points of $E$ are the points $(x,y)$ satisfying the curve equation,
together with the point at infinity. These points form an abelian
group. The $j$-invariant of an elliptic curve given as above is
$j(E)=\frac{256 \cdot 27 \cdot A^3}{4A^3 + 27B^2}$. Two elliptic
curves $E,E'$ defined over a field $k$ have the same $j$-invariant if
and only if they are isomorphic over the algebraic closure of $k$.

Let $E_1$ and $E_2$ be elliptic curves defined over $k$. An {\em isogeny} $\varphi:E_1 \to E_2$
defined over $k$ is a non-constant rational map which
is also a group homomorphism from $E_1(k)$ to
$E_2(k)$~\cite[III.4]{AEC}. The \textit{degree} of an isogeny is its degree as
a rational map. When the degree $d$ of the isogeny $\varphi$ is
coprime to $p$, then $\varphi$ is separable and every separable
isogeny of degree $d>1$ can be factored into a
composition of isogenies of prime degrees such that the product of the
degrees equals $d$. If $\psi: E_1\to E_2$ is an isogeny of degree $d$,
the {\em dual isogeny} of $\psi$ is the unique isogeny $\widehat{\psi}: E_2\to E_1$ satisfying 
$\widehat{\psi}\psi=[d]$, where $[d]:E_1\to E_1$ is the
multiplication-by-$d$ map.

In this paper we will be interested in isogenies of $\ell$-power
degree, for $\ell$ a prime different from the characteristic of $k$.
We can describe a separable isogeny from an elliptic curve $E$ to some
other elliptic curve via its kernel. Given an elliptic curve $E$ and a
finite subgroup $H$ of $E$, there is a separable isogeny
$\varphi: E\to E'$ having kernel $H$ which is unique up to
isomorphism~(see \cite[III.4.12]{AEC}). In this paper we will identify
two isogenies if they have the same kernel.
We can compute equations for the isogeny from its kernel by using
V\'elu's formula~\cite{Vel71}, \S\ref{sec:velusFormula}.

An isogeny of an elliptic curve $E$ to itself is called an
endomorphism of $E$. If $E$ is defined over some finite field
$\mathbb{F}_q$, then the set of endomorphisms of $E$
defined over $\Fqbar$ together with the zero map form
a ring under the operations addition and composition. It is called the
\textit{endomorphism ring} of $E$, and is denoted by $\End(E)$. It is isomorphic either to an
order in a quadratic imaginary field or to an order in a quaternion
algebra. In the first case we call $E$ an {\em ordinary elliptic
  curve}. An elliptic curve whose endomorphism is isomorphic to an
order in a quaternion algebra is called a {\em supersingular elliptic
  curve}.  Every supersingular elliptic curve over a field of
characteristic $p$ has a model that is defined over $\mathbb{F}_{p^2}$
because the $j$-invariant of such a curve is in
$\mathbb{F}_{p^2}$. Given $j\in \Fqbar$ such that
$j\neq0,1728$, we write $E(j)$ for the curve defined by
the equation 
\begin{equation}\label{ellcurve}
y^2+xy=x^3-\frac{36}{j-1728}x-\frac{1}{j-1728}.
\end{equation}
Such a curve can be put into a short Weierstrass equation
$y^2=x^3+Ax+B$.  
We also
write $E(0)$ and $E(1728)$ for the curves with equations $y^2=x^3+1$
and $y^2=x^3+x$, respectively.

We borrow definitions from~\cite[2.3]{Sut13} to define the $\ell$-isogeny
graph of supersingular elliptic curves in characteristic $p$. 
Given a prime $\ell$, the {\em modular polynomial} $\Phi_{\ell}(X,Y)\in
\mathbb{Z}[X,Y]$ has the property that if $j,j'\in \mathbb{F}_q$ then
$\Phi_{\ell}(j,j')=0$ if and only if there are elliptic curves
$E,E'/\mathbb{F}_q$ with $j$-invariants $j$ and $j'$,
respectively, such that there is a separable isogeny $\phi: E\to E'$ of degree
$\ell$. When $j\not=0,1728$, $j,j'\in \F_q$, and $E$ is ordinary, we can choose $E'$ and
$\phi$ such that
$\phi$ is defined over $\mathbb{F}_q$~\cite[Proposition
6.1]{Schoof95}. We state and prove a similar result for supersingular elliptic
curves $E$ in Corollary~\ref{cor:isogenydefined}. 

\begin{definition}\label{deg: isogeny graph}
	Let $\ell$ be a prime different from $p$. The
        \textit{supersingular $\ell$-isogeny graph in characteristic $p$} is
        the multi-graph $G(p,\ell)$ whose vertex set is 
\[
V=V(G(p,\ell)) = \{ j \in \mathbb{F}_{p^2}: E(j) \text{ is
  supersingular}\},
\]
and the number of directed edges from $j$ to $j'$ is equal to
multiplicity of $j'$ as a root of $\Phi_{\ell}(j,Y)$. We can identify
an edge between $E(j)$ and $E(j')$ with a cyclic isogeny
$\phi: E(j)\to E(j')$ of degree $\ell$. Such an isogeny is unique up
to post-composing with an automorphism of $E(j')$. By the above
discussion, we can label the edges of $G(p,\ell)$ which start at a
given vertex $E(j)$ with the $\ell+1$ chosen isogenies whose kernels
are the nontrivial cyclic subgroups of
$E(j)[\ell]$.  
\end{definition}






Let $E,E'$ be two supersingular elliptic curves defined over
$\mathbb{F}_{p^2}$. For each prime $\ell \neq p$, $E$ and
$E'$ are connected by a chain of isogenies of degree
$\ell$~\cite{Mes86}. By~\cite[Theorem79]{Kohel}, $E$ and $E'$ can be
connected by $m$ isogenies of degree $\ell$ (and hence by a single isogeny of degree $\ell^m$), where $m=O(\log p)$. 
If $\ell$ is a fixed prime such that $\ell=O(\log p)$, then any $\ell$-isogeny in the
chain above can either be specified by rational maps or by giving the
kernel of the isogeny, and both of these representations have
size 
polynomial in $\log p$. 

The theorem below summarizes several properties of the supersingular $\ell$-isogeny
graph mentioned above.
\begin{theorem}\label{theorem: simple properties}\normalfont
	Let $\ell \neq p$ be a prime, and let $G(p,\ell)$ be the
        supersingular $\ell$-isogeny graph in characteristic $p$ as in Definition~\ref{deg: isogeny graph}.
	\begin{enumerate}
		\item $G$ is connected.
		\item $G$ is $(\ell+1)$-regular as a directed graph.
		\item $\# V = \left[\frac{p}{12}\right]+\varepsilon_p$. Here, 
		\begin{equation*}\label{eq:num of vertices}
			\varepsilon_p \Def \begin{cases}
				0, & p\equiv 1 \pmod{12}\\
				1, & p=3\\
				1, & p\equiv 5,7 \pmod{12}\\
				2, & p\equiv 11\pmod{12}
			\end{cases}
		\end{equation*}
	\end{enumerate}
\end{theorem}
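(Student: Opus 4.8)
All three assertions are classical, so the plan is to treat each in turn, citing the substantive input from the literature and filling in the short verifications. Part~(1) needs essentially nothing new: it is exactly the fact recalled just before the statement, that any two supersingular curves over $\F_{p^2}$ are linked by a chain of $\ell$-isogenies \cite{Mes86} (equivalently, \cite[Theorem 79]{Kohel}), so the underlying undirected graph is connected. If one wanted a self-contained argument I would instead deduce it from the Deuring correspondence together with the fact that every left ideal class of a fixed maximal order in the quaternion algebra over $\Q$ ramified precisely at $p$ and $\infty$ is represented by an ideal whose norm is a power of $\ell$ (which follows from strong approximation); but citing \cite{Mes86} is the economical route.

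For part~(2), the out-degree is handled by counting kernels: since $\ell\neq p$, the group $E(j)[\ell]\cong(\Z/\ell\Z)^2$ has exactly $\ell+1$ subgroups of order $\ell$, and by the discussion after Definition~\ref{deg: isogeny graph} these label the edges of $G(p,\ell)$ leaving $j$, so every vertex has out-degree $\ell+1$. For the in-degree I would pass to dual isogenies: with isogenies identified by their kernels, $\phi\mapsto\widehat\phi$ is an involution sending an $\ell$-isogeny $E(j)\to E(j')$ to one $E(j')\to E(j)$ (its kernel being a cyclic order-$\ell$ subgroup of $E(j')$), and it therefore matches the edges into any vertex with those leaving it, so the in-degree is again $\ell+1$. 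Alternatively one argues straight from the defining modular polynomial: $\Phi_\ell(X,Y)$ is symmetric of degree $\ell+1$ in each variable, and for supersingular $j$ every root of $\Phi_\ell(j,Y)$ is again a supersingular $j$-invariant (hence lies in $\F_{p^2}$), so summing root multiplicities gives out-degree $=$ in-degree $=\ell+1$.

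Part~(3) is the classical enumeration of supersingular $j$-invariants, and I would base it on the Eichler mass formula
\[
\sum_{E/\overline{\F}_p\ \text{supersingular}}\frac{1}{\#\Aut(E)}=\frac{p-1}{24}
\]
(see \cite{AEC}), combined with the familiar facts that, in characteristic $p>3$, $\#\Aut(E)=2$ unless $j(E)\in\{0,1728\}$, with $\#\Aut(E)=6$ when $j(E)=0$ and $\#\Aut(E)=4$ when $j(E)=1728$, that $j=0$ is supersingular iff $p\equiv2\pmod3$, and that $j=1728$ is supersingular iff $p\equiv3\pmod4$. Writing $S=\#V$ and $\delta_0,\delta_{1728}\in\{0,1\}$ for the indicators that $j=0$, resp.\ $j=1728$, is supersingular, the mass formula becomes $\frac{p-1}{24}=\frac{S-\delta_0-\delta_{1728}}{2}+\frac{\delta_0}{6}+\frac{\delta_{1728}}{4}$, i.e.\ $12S=p-1+8\delta_0+6\delta_{1728}$; substituting each of the four residues of $p$ modulo $12$ (and treating $p=3$ on its own, where the single supersingular curve has $\#\Aut=12$) yields $S=\left[\frac{p}{12}\right]+\varepsilon_p$ in every case.

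The only genuine input above is the mass formula, which I would quote rather than reprove, so I expect the bulk of the real work to be the elementary modular bookkeeping in part~(3) together with the careful recording of the automorphism-group facts; parts~(1) and~(2) are essentially immediate from material already present in the excerpt. If instead one insisted on a from-scratch proof of part~(1), that connectivity statement — requiring strong approximation, or a theta-series/Brandt-matrix argument — would be the hardest piece.
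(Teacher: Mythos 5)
The paper states this theorem only as a summary of classical facts (the sentence immediately before it reads ``The theorem below summarizes several properties \dots mentioned above''), and gives no proof, so there is no author argument to compare yours against; your write-up supplies the standard one and it is correct. The modular-polynomial argument is the cleaner of your two routes to part~(2): $\Phi_\ell$ is symmetric and has degree $\ell+1$ in each variable, and supersingularity is preserved under $\ell$-isogeny, so out-degree and in-degree are both $\ell+1$ at every vertex with the definition the paper actually uses (multiplicity of roots of $\Phi_\ell(j,Y)$). Your alternative dual-isogeny argument is fine for $j\notin\{0,1728\}$ but needs care at the extra-automorphism vertices, where the map from cyclic $\ell$-subgroups to edges is not injective --- this is exactly the caveat the paper records in the remark following the theorem, and is worth flagging since your phrasing ``matches the edges into any vertex with those leaving it'' glosses over it. Your mass-formula computation for part~(3) is correct; the algebra $12S=p-1+8\delta_0+6\delta_{1728}$ checks out and reduces to the stated $\varepsilon_p$ in each congruence class, with $p=3$ handled separately as you note (although the paper's running hypothesis $p>3$ makes the $p=3$ line of the theorem a minor inconsistency on the paper's side, not yours). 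Part~(1) by citation to Mestre/Kohel is exactly what the paper itself does.
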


\begin{remark}
  We have an exception to the $\ell+1$-regularity at the vertices and
  their neighbors corresponding to elliptic curves with $j = 0, 1728$,
  due to their extra automorphisms.
\end{remark}

As $G(p,\ell)$ is connected, for any two supersingular elliptic curves
$E,E'$ defined over $\F_{p^2}$ there is an isogeny $\phi:E \to E'$ of
degree $\ell^e$ for some $e$. In fact, we can take this isogeny to be
defined over an extension of $\F_{p^2}$ of degree at most 6. If
$E/\F_q$ is an elliptic curve and $n\geq 1$ is an integer, we denote
the ring of endomorphisms of $E$ which are defined over $\F_{q^n}$ by
$\End_{\F_{q^n}}(E)$. 

\begin{proposition}\label{prop:endsdefined}
Let $E/\F_{p^2}$ be a supersingular elliptic curve. Then
$\End_{\F_{p^{2d}}}(E)=\End(E)$, where $d=1$ if $j(E)\neq 0,1728$,
  $d=1$ or $d=3$ if $j(E)=0$, and $d=1$ or $d=2$ if $j(E)=1728$. 
\end{proposition}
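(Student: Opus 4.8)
The plan is to show that every endomorphism of $E$ is already defined over a small extension of $\F_{p^2}$. The key input is that for a supersingular elliptic curve $E/\F_{p^2}$, the Frobenius endomorphism $\pi_{p^2}$ acts as an integer: by a theorem of Deuring (or Waterhouse), a supersingular curve over $\F_{p^2}$ has $p^2$-Frobenius $\pi$ satisfying $\pi^2 = p^2$ (up to a twist), so after possibly replacing $E$ by a twist we may assume $\pi_{p^2} = [\pm p]$ as an element of $\End(E)$. Concretely, one should argue: there is a model of $E$ over $\F_{p^2}$ for which $\pi_{p^2} = [-p]$ (the "superspecial'' normalization), and every such model has $E[n] \subseteq E(\F_{p^{2}})$-rational structure controlled by $\pi$. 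I would first record this normalization, then reduce to bounding the field of definition of a single endomorphism $\phi \in \End(E)$.

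The main step is then a Galois-descent argument. Let $G = \Gal(\Fqbar/\F_{p^2})$, topologically generated by $\pi_{p^2}$. For $\phi \in \End(E) = \End_{\Fqbar}(E)$ and $\sigma \in G$, the conjugate $\phi^\sigma$ is again an endomorphism of $E^\sigma = E$, so $G$ acts on the finite-rank $\Z$-module $\End(E)$ through a finite quotient; let $d$ be the order of this quotient, so that $\End_{\F_{p^{2d}}}(E) = \End(E)$. It remains to bound $d$. Because conjugation by $\pi_{p^2}$ is an automorphism of the quaternion order $\End(E)$ fixing its center $\Z$, it is an automorphism of a maximal (or Eichler) order in a definite quaternion algebra; one shows such an automorphism induced by an element of the curve's own Galois action must have small order. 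When $j(E) \neq 0, 1728$ the curve has no extra automorphisms, $\pi_{p^2}$ is (a twist of) a central element, conjugation is trivial, and $d = 1$. When $j(E) = 1728$ (resp. $j(E)=0$) the automorphism group is cyclic of order $4$ (resp. $6$), and the Galois conjugation on $\End(E)$ factors through $\Aut(E)/\{\pm 1\}$, which has order $2$ (resp. $3$); hence $d \mid 2$ (resp. $d \mid 3$). I expect this is the outline Sutherland supplied, since the statement's case division exactly matches $\#(\Aut(E)/\{\pm1\})$.

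More precisely, for the special $j$-values I would argue as follows. Write $E = E(1728): y^2 = x^3 + x$ over $\F_{p^2}$, with automorphism $[i]: (x,y) \mapsto (-x, \sqrt{-1}\,y)$. If $p \equiv 1 \pmod 4$ then $\sqrt{-1} \in \F_{p^2}$ already (indeed in $\F_p$), so $[i]$ is defined over $\F_{p^2}$; combined with the fact that $\End(E)$ is generated by $[i]$ and the $p$-power Frobenius-type element, one gets $d = 1$. If $p \equiv 3 \pmod 4$ then $\sqrt{-1} \in \F_{p^2} \setminus \F_p$, the nontrivial element of $\Gal(\F_{p^2}/\F_p)$ sends $[i] \mapsto [-i] = [i]^{-1}$, and one checks this is the obstruction, forcing $\End_{\F_{p^2}}(E)$ to be a suborder and $\End_{\F_{p^4}}(E) = \End(E)$, i.e. $d = 2$. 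The argument for $j(E) = 0$ with $E(0): y^2 = x^3 + 1$ and its order-$6$ automorphism $(x,y)\mapsto(\zeta_3 x, -y)$ is identical, giving $d = 1$ when $\zeta_3 \in \F_{p^2}$ (i.e. always, since $\F_{p^2}^\times$ has order divisible by $3$ unless $p = 3$, hmm) — one must be a little careful about which of $\zeta_3, \zeta_6$ lies in which field, but in all cases the field of definition of the full automorphism group has degree $1$ or $3$ over $\F_{p^2}$, and $\End(E)$ is generated over $\End_{\F_{p^2}}(E)$ by automorphisms, yielding $d \in \{1,3\}$.

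The hard part will be justifying cleanly that $\End(E)$ is generated over $\Z[\pi_{p^2}]$ (or over $\End_{\F_{p^2}}(E)$) by automorphisms of $E$ — equivalently, that the Galois action on $\End(E)$ factors through $\Aut(E)/\{\pm 1\}$ acting by conjugation. This requires knowing the structure of the maximal orders at $j = 0, 1728$ explicitly (e.g. via the quaternion algebra $\left(\frac{-1,-p}{\Q}\right)$ or $\left(\frac{-3,-p}{\Q}\right)$) and identifying the Frobenius conjugation with an inner automorphism by an automorphism of the curve; the $j \neq 0,1728$ case is comparatively routine once one observes $\Aut(E) = \{\pm 1\}$ is central. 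I would cite the explicit maximal-order descriptions of the relevant supersingular curves (e.g. from the references in the introduction) to close this gap rather than re-derive them.
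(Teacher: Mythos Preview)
Your high-level strategy---bound the order of the Galois action of $\Gal(\Fpbar/\F_{p^2})$ on $\End(E)$, which is conjugation by the $p^2$-Frobenius $\pi$, and show it factors through $\Aut(E)/\{\pm1\}$---is correct and is essentially what the paper does. But you are missing the one-line observation that resolves your ``hard part'' without any explicit quaternion-order computations. Since $E$ is supersingular, $[p]:E\to E$ is purely inseparable of degree $p^2=\deg\pi$; by the factorization of isogenies through Frobenius (\cite[II.2.12]{AEC}) one has $[p]=\alpha\circ\pi$ for some $\alpha\in\Aut(E)$. Because $[p]$ and $\pi$ are $\F_{p^2}$-rational, so is $\alpha$, hence $\alpha$ commutes with $\pi$. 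Then $\pi^d=(\alpha^{-1}[p])^d=\alpha^{-d}[p^d]=[\pm p^d]$ as soon as $\alpha^d\in\{\pm1\}$, and one concludes via Waterhouse that all endomorphisms are $\F_{p^{2d}}$-rational. This is the paper's entire argument.

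Your explicit analysis for $j=0,1728$ contains a real error. For any odd $p$ one has $\sqrt{-1},\zeta_3\in\F_{p^2}$, so the automorphisms of $E(0)$ and $E(1728)$ are \emph{always} $\F_{p^2}$-rational; the case split on $p\bmod 4$ is not what governs $d$ (and you are conflating $\Gal(\F_{p^2}/\F_p)$ with $\Gal(\Fpbar/\F_{p^2})$ when you say Frobenius sends $[i]\mapsto[-i]$). What actually determines whether $d=1$ or $d>1$ is which $\F_{p^2}$-twist of $E(0)$ or $E(1728)$ one has: different sextic (resp.\ quartic) twists yield different $\alpha\in\Aut(E)$ in the factorization $[p]=\alpha\pi$, hence different orders of $\alpha$ in $\Aut(E)/\{\pm1\}$. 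For the same reason, your opening move of ``replacing $E$ by a twist so that $\pi_{p^2}=[\pm p]$'' gives away the game: that normalization forces $d=1$, and the nontrivial cases of the proposition are precisely the twists where it fails.
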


\begin{proof}
  First, we observe that for a supersingular elliptic curve
  $E/\F_{q}$, all endomorphisms of $E$ are defined over $\F_q$ if and
  only if the Frobenius endomorphism $\pi: E\to E$ is equal to $[p^k]$
  or $[-p^k]$ 
  for some $k$. This follows from Theorem 4.1 of~\cite{waterhouse},
  case (2). This is the case when $q$ is an even power of $p$ and the
  trace of Frobenius is $\pm 2\sqrt{q}$.

  Now assume $E/\F_{p^2}$ is a supersingular elliptic curve and let
  $\pi: E\to E$ be the Frobenius endomorphism of $E$. Consider the
  multiplication-by-$p$ map $[p]: E\to E$. By~\cite[II.2.12]{AEC}, the
  map $[p]$ factors as
\[
[p] = \alpha \circ \pi,
\]
where $\alpha$ is an automorphism of $E$. The automorphism is defined
over $\F_{p^2}$, since $[p]$ and $\pi$ are defined over
$\F_{p^2}$. Thus $\pi$ and $\alpha$ commute. If $j(E)\not=0,1728$,
then $\Aut(E)=\{[\pm 1]\}$ and thus $[p]=\pm \pi$. By the above observation,
we have $\End_{F_{p^2}}(E) = \End(E)$.

If $j(E)=0$, then $\alpha^3 = [\pm 1]$. We also have
\[
[p^3] = (\alpha\circ \pi)^3 = \alpha^3 \circ \pi^3,
\]
so $\pi^3 = [\pm p^3]$. In this case, the Frobenius of the base change
of $E$ to $\F_{p^6}$ is $\pi^3$, and thus
$\End_{\F_{p^6}}(E) = \End(E)$ again by the above observation. The proof of
the case $j(E)=1728$ is similar.
\end{proof}

\begin{corollary}\label{cor:isogenydefined}
  If $E_0,E_1/\F_{p^2}$ are supersingular and if $\ell\not= p$, then
  $\Hom_{\F_{p^{2d}}}(E_0, E_1) = \Hom(E_0,E_1)$, where 
  $d=1,2,3,$ or $6$. If $j(E_i)\not=0,1728$ for $i=0,1$, then we can
  take $d=2$. 
\end{corollary}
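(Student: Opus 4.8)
The plan is to reduce the statement about homomorphisms $E_0 \to E_1$ to the statement about endomorphisms already established in Proposition~\ref{prop:endsdefined}, by passing to a product or to a single curve via a connecting isogeny. First I would recall that $G(p,\ell)$ is connected (Theorem~\ref{theorem: simple properties}), so there is an isogeny $\phi\colon E_0 \to E_1$ of degree $\ell^e$ for some $e$; a priori $\phi$ is defined over $\Fpbar$, but since $E_0[\ell^e]$ has a model over a finite extension and the kernel subgroup of $\phi$ is cut out by finitely many algebraic conditions, $\phi$ is in fact defined over some $\F_{p^{2m}}$. More usefully, any $\psi \in \Hom(E_0,E_1)$ can be turned into the endomorphism $\widehat{\phi}\circ \psi \in \End(E_0)$, and conversely composition with $\phi$ and its dual lets one move between $\Hom(E_0,E_1)$ and a subgroup of $\End(E_0)$ of finite index. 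The key point is that if $\psi$ and $\phi$ are both defined over a field $L \supseteq \F_{p^2}$, then $\widehat{\phi}\circ\psi$ is defined over $L$, and conversely if $\widehat{\phi}\circ\psi$ and $\phi$ are defined over $L$ then so is $\psi$ (one recovers $\psi$ from $\widehat{\phi}\circ\psi$ by further composing with $\phi$ and dividing by $\deg\phi$, using that multiplication-by-$n$ maps and duals are defined over the base field).

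The cleanest route, which I would actually write up, is to work on the product $E_0 \times E_1$, or rather to invoke Proposition~\ref{prop:endsdefined} directly for $E_0$ and for $E_1$ separately. Fix a connecting isogeny $\phi\colon E_0 \to E_1$ of degree $\ell^e$. Let $d_0, d_1 \in \{1,2,3\}$ be the exponents from Proposition~\ref{prop:endsdefined} for $E_0$ and $E_1$ respectively, so that all endomorphisms of $E_i$ are defined over $\F_{p^{2d_i}}$; set $d = \operatorname{lcm}(d_0,d_1, m_0)$ where $\F_{p^{2m_0}}$ is a field over which $\phi$ itself is defined — one checks $m_0$ can be absorbed, since $\phi$ defined over $\Fpbar$ and the Galois conjugates of $\phi$ all have the same degree and, after composing with automorphisms, the relevant cyclic kernel is Galois-stable over a small extension; in any case $d \in \{1,2,3,6\}$ because $d_0,d_1 \in \{1,2,3\}$ and $\operatorname{lcm}$ of such numbers lies in $\{1,2,3,6\}$, and when $j(E_i)\neq 0,1728$ we have $d_0=d_1=1$ giving $d \mid 2$. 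Now for $\psi \in \Hom_{\Fpbar}(E_0,E_1)$, the composite $\widehat{\phi}\circ\psi \in \End_{\Fpbar}(E_0) = \End_{\F_{p^{2d_0}}}(E_0) \subseteq \End_{\F_{p^{2d}}}(E_0)$ is defined over $\F_{p^{2d}}$; since $\phi$ is also defined over $\F_{p^{2d}}$, the map $\phi \circ (\widehat{\phi}\circ\psi) = [\ell^e]\circ\psi = [\ell^e]\psi$ is defined over $\F_{p^{2d}}$, and since $[\ell^e]$ is defined over the prime field, $\psi$ is defined over $\F_{p^{2d}}$. (Here I use that if $[n]\circ\psi$ is defined over $L$ then $\psi$ is, because $\psi$ is determined by $[n]\psi$ as a rational map and one can recover it; alternatively, $\psi = \frac{1}{n}([n]\psi)$ in the divisible group $\Hom(E_0,E_1)\otimes\Q$, and being defined over $L$ is a closed condition on this finite-rank module.) This gives $\Hom_{\F_{p^{2d}}}(E_0,E_1) = \Hom(E_0,E_1)$.

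The main obstacle is the subtle point that a connecting isogeny $\phi$, which Definition~\ref{deg: isogeny graph} and Theorem~\ref{theorem: simple properties} produce abstractly, need not a priori be defined over $\F_{p^2}$ or even over a controlled extension — so I must argue that $\phi$, or a suitable modification of it by post-composition with automorphisms, is defined over $\F_{p^{2d}}$ with $d \in \{1,\dots,6\}$. The way to handle this is exactly Proposition~\ref{prop:endsdefined}'s observation about Frobenius: over $\F_{p^{2d}}$ with $d = \operatorname{lcm}(d_0,d_1)$, both $\pi_{E_0}^{d}$ and $\pi_{E_1}^{d}$ are $\pm[p^{\text{power}}]$, hence central and acting trivially on isogenies; so the Galois action of $\operatorname{Gal}(\Fpbar/\F_{p^{2d}})$ on $\Hom(E_0,E_1)$, which is generated by $\psi \mapsto \pi_{E_1}^{d}\circ\psi\circ\pi_{E_0}^{-d}$ up to the scalar $\pm p^{\ast}$, is trivial; therefore every element of $\Hom(E_0,E_1)$ — including our connecting $\phi$ and every $\psi$ — is already defined over $\F_{p^{2d}}$. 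This simultaneously proves the corollary without ever needing to separately track the field of definition of $\phi$, and it is the argument I would present; the verification that $\operatorname{lcm}(d_0,d_1)\in\{1,2,3,6\}$ and equals $1$ (so $d$ can be taken $\le 2$, in fact the statement allows $d=2$) when both $j$-invariants avoid $0,1728$ is then immediate from Proposition~\ref{prop:endsdefined}.
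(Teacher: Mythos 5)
There is a genuine gap in your argument, centered on the sign of Frobenius. You set $d=\operatorname{lcm}(d_0,d_1)$ (or $\operatorname{lcm}(d_0,d_1,m_0)$) and assert that since $\pi_{E_0}^{d}$ and $\pi_{E_1}^{d}$ are each $\pm[p^{d}]$, they are central and the Galois action on $\Hom(E_0,E_1)$ is trivial. But the Galois action is $\psi\mapsto \pi_{E_1}^{d}\circ\psi\circ(\pi_{E_0}^{d})^{-1}$, and this is trivial only when the two Frobenii are the \emph{same} scalar. If $\pi_{E_0}^{d}=[p^{d}]$ while $\pi_{E_1}^{d}=[-p^{d}]$, the action is $\psi\mapsto -\psi$, which is nontrivial, so nothing in $\Hom(E_0,E_1)$ is defined over $\F_{p^{2d}}$. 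Your hedge ``up to the scalar $\pm p^{\ast}$'' is precisely the point that needs an argument, and it is false in general.

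The gap is visible already in the generic case. Take $E_0,E_1/\F_{p^2}$ supersingular with $j(E_i)\neq 0,1728$ and with traces of $(p^2)$-Frobenius equal to $2p$ and $-2p$ respectively (both possibilities occur). Then $d_0=d_1=1$ by Proposition~\ref{prop:endsdefined}, so your $d=\operatorname{lcm}(d_0,d_1)=1$, and your argument would conclude $\Hom_{\F_{p^2}}(E_0,E_1)=\Hom(E_0,E_1)$. But $\#E_0(\F_{p^2})=(p-1)^2\neq (p+1)^2=\#E_1(\F_{p^2})$, so $E_0$ and $E_1$ are not $\F_{p^2}$-isogenous and $\Hom_{\F_{p^2}}(E_0,E_1)=0$. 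The correct value of $d$ here is $2$, which is exactly what the corollary asserts. Your writeup is internally inconsistent on this point: you compute $\operatorname{lcm}(d_0,d_1)=1$ but then quietly say ``so $d$ can be taken $\le 2$,'' without any reason for the jump from $1$ to $2$.

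The missing ingredient is the step the paper calls choosing $d'$: one must enlarge the base field until the two curves have the \emph{same} number of points, equivalently until both $p^{2d}$-Frobenii become $[+p^{d}]$ (not merely $\pm[p^{d}]$ with possibly opposite signs). The paper does this by noting the eigenvalues of the $p^2$-Frobenius lie in $\{\pm p,\; p(1\pm\sqrt{-3})/2\}$, so there is a $d'\in\{1,2,3,6\}$ with $\alpha^{d'}=\beta^{d'}=p^{d'}$ for both curves; then it takes $d=\operatorname{lcm}(d',d'')$ with $d''$ from Proposition~\ref{prop:endsdefined}. Your first sketch (reducing to $\End(E_0)$ via a connecting $\ell$-power isogeny $\phi$) would also work in principle, and your observation that $[n]\psi$ defined over $L$ forces $\psi$ defined over $L$ (torsion-freeness of $\Hom$) is correct, but you still have to bound the field of definition of $\phi$ itself, and ``one checks $m_0$ can be absorbed'' is not an argument; bounding $m_0$ runs into exactly the same sign issue.
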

\begin{proof}
  
  First, we claim that for
  some $d'\in\{1,2,3,6\}$, we have
  $\#E_0(\F_{p^{2d'}})=\#E_1(\F_{p^{2d'}})$. This follows from the
  fact that all supersingular curves $E/\F_{p^2}$ will have the same
  number of points over an extension of $\F_{p^2}$ of degree $1,2,3,$
  or $6$, which we now prove. The trace of Frobenius of
  a supersingular elliptic curve $E/\F_{p^2}$ is either $0, \pm p$ or
  $\pm 2p$, again by Proposition 4.1 of~\cite{waterhouse}. By
  inspection, we see that the roots $\alpha,\beta$ of the
  characteristic polynomial of the ($p^2$-power) Frobenius
  endomorphism of $E$, then $\alpha$ and $\beta$ are either $\pm p$,
  $\pm p(1\pm \sqrt{-3})/2$, $\alpha=\beta=p$, or
  $\alpha=\beta=-p$. Thus for some $d'\in\{1,2,3,6\}$,
  $\alpha^{d'}=\beta^{d'}=p^{d'}$. By~\cite[V.2.3.1(a)]{AEC},
  \[
\#E(\F_{p^{2d'}}) = p^{2d'}+1 - \alpha^{d'} - \beta^{d'} = p^{2d'}+1-2p^{d'}.
\]
Thus we can choose $d'=1,2,3,$ or $6$ such that the claim holds for
$E_0$ and $E_1$. 

  Now let
  $\F_{p^{2d''}}/\F_{p^2}$ be an extension such that
  $\End_{\F_{p^{2d''}}}(E_i)=\End(E_i)$ for $i=0,1$. Let
  $d=\lcm\{d',d''\}$.  By Proposition~\ref{prop:endsdefined}, $d\in
  \{1,2,3,6\}$. 


  If $d>1$, we base change $E_i$ to $\F_{p^{2d}}$ and denote these
  curves again by $E_i$. For each $i=0,1$, let $\pi_{E_i}$ be the
  $p^{2d}$-power Frobenius endomorphism for $E_i$; then by our choice
  of $d'$ which divides $d$, either $\pi_{E_i}=[p^d]: E_i\to E_i$ or
  $\pi_{E_i} = [-p^d]: E_i\to E_i$ for
  $i=0,1$. Consequently, for any $\phi\in \Hom(E_0,E_1)$ it follows
  that
\[
\phi \circ \pi_{E_0} = \pi_{E_1} \circ \phi.
\]
Thus $\phi$ is defined over $\F_{p^{2d}}$. 
\end{proof}

\begin{remark}
If $E_0,E_1/\F_{p^2}$ are supersingular and $j(E_i)\not=0,1728$ for
$i=0,1$, then $E_0$ and $E_1$ are connected by a chain of
$\ell$-isogenies defined over $\F_{p^4}$. 
\end{remark}


\section{Quaternion algebras, endomorphism rings, and
  cycles}\label{sec: endo rings computation}

\subsection{Quaternion algebras}\label{sec: theoretical aspects}

For $a, b \in \QQ^{\times}$, let $H(a,b)$ denote the quaternion
algebra over $\QQ$ with basis $1,i,j,ij$ such that $i^2=a$, $j^2=b$
and $ij=-ji$.  That is,
\[
H(a,b) = \QQ + \QQ \,i + \QQ \,j  + \QQ \,i j.
\]
Every 4-dimensional central simple algebra over $\mathbb{Q}$ is isomorphic to $H(a, b)$
for some $a, b \in \QQ$; for example, see~\cite[Proposition 7.6.1]{Voight}. 

There is a {\em canonical involution} on $H(a,b)$ which sends an
element $\alpha~=~a_1+a_2i+a_3j+a_4ij$ to $\overline{\alpha}:=a_1-a_2i-a_3j-a_4ij$.
Define the {\em reduced trace} of an element $\alpha$ as above to
be \[\Trd(\alpha) = \alpha + \overline{\alpha}= 2a_1,\] and the {\em reduced
norm} to be 
\[
\Nrd(\alpha) = \alpha \overline{\alpha}= a_1^2 - aa_2^2 -ba_3^2 + aba_4^2.
\]

\begin{definition}
Let $B$ be a quaternion algebra over $\QQ$, and let $p$ be a prime or
$\infty$. Let $\QQ_p$ be the $p$-adic rationals if $p$ is finite, and
let $\QQ_{\infty} =\mathbb{R}$. 
 We say that $B$ is {\em split at $p$}  if 
\[
B \otimes_{\QQ} \QQ_p \cong M_2(\QQ_p),
\]
where $M_2(K)$ is the algebra of $2 \times 2$ matrices with
coefficients in $K$. Otherwise $B$ is said to be {\em ramified at $p$}.
\end{definition}

Orders in quaternion algebras appear as endomorphism rings of some elliptic curves (\cite{Deuring}):
\begin{theorem}[Deuring Correspondence]\label{thm:deuringcorrespondence}
  Let $E$ be an elliptic curve over $\F_p$ and suppose that the
  $\Z$-rank of $\End(E)$ is $4$. Then $B:=\End(E) \otimes_\Z \Q$ is a
  quaternion algebra ramified exactly at $p$ and $\infty$, denoted $B_{p, \infty}$, and
  $\End(E)$ is isomorphic to a
  maximal order in $B_{p,\infty}$.
\end{theorem}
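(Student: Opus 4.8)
The plan is to prove the Deuring Correspondence statement in three stages: first show $B := \End(E)\otimes_\Z\Q$ is a quaternion algebra, then pin down exactly which places it ramifies at, and finally upgrade $\End(E)$ from an order to a \emph{maximal} order. For the first stage I would invoke the hypothesis that $\End(E)$ has $\Z$-rank $4$ together with the general structure theory of $\End(E)$: for any elliptic curve, $\End(E)$ is a free $\Z$-module, and $\End(E)\otimes_\Z\Q$ is a division algebra equipped with the canonical (Rosati-type) involution $\varphi\mapsto\widehat\varphi$ coming from dual isogenies, satisfying $\varphi\widehat\varphi = \deg\varphi \geq 0$. A finite-dimensional division $\Q$-algebra with a positive anti-involution and $\Q$-dimension $4$ is forced to be a quaternion algebra; one gets the center is $\Q$ (it cannot be an imaginary quadratic field, since that would make $\End(E)$ commutative of rank $2$, contradicting rank $4$) and then $B$ is central simple of dimension $4$ over $\Q$, hence a quaternion algebra.

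For the ramification, the key point is that $E$ must be supersingular (an ordinary curve over a finite field has $\End(E)$ of rank $2$), and $B$ is ramified at $p$ and $\infty$ and unramified everywhere else. Ramification at $\infty$ is immediate from the positive involution: $B\otimes_\Q\R$ carries a positive anti-involution, so it must be the Hamilton quaternions rather than $M_2(\R)$. To see $B$ is ramified at $p$ and split at all finite $\ell\neq p$, I would use the $\ell$-adic (resp.\ $p$-adic) representations: for $\ell\neq p$, $\End(E)\otimes_\Z\Z_\ell$ acts faithfully on the rank-$2$ free $\Z_\ell$-module $T_\ell E$, which embeds $B\otimes_\Q\Q_\ell$ into $M_2(\Q_\ell)$, forcing it to split there; at $p$, the action on the (formal group / Dieudonné module, or equivalently the fact that $E[p^\infty]$ is connected of height $2$) shows $B\otimes_\Q\Q_p$ is the unique nonsplit quaternion algebra over $\Q_p$. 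Since a quaternion algebra over $\Q$ is determined by its (finite, even-cardinality) set of ramified places, and we have shown this set is exactly $\{p,\infty\}$, we conclude $B\cong B_{p,\infty}$.

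For the final stage — maximality of $\End(E)$ — the cleanest route is again local. An order in a quaternion algebra is maximal if and only if it is maximal at every prime; at primes $\ell\neq p$ one checks that $\End(E)\otimes\Z_\ell\hookrightarrow\End(T_\ell E)\cong M_2(\Z_\ell)$ is in fact an isomorphism, because the Tate module functor on $\ell$-divisible groups is fully faithful (Tate's theorem for $\ell\neq p$ over finite fields, using that $E$ has ordinary-enough behavior away from $p$), so $\End(E)\otimes\Z_\ell\cong M_2(\Z_\ell)$ is maximal; at $p$ one uses the corresponding statement for Dieudonné modules / $p$-divisible groups over $\Fbar_p$ (Tate/Oort), identifying $\End(E)\otimes\Z_p$ with the maximal order in the division quaternion $\Q_p$-algebra. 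The main obstacle I expect is the $p$-adic local analysis: away from $p$ the Tate-module argument is standard and self-contained, but pinning down both the ramification \emph{and} maximality at $p$ requires the theory of the formal group / Dieudonné module of a supersingular elliptic curve, and one must be careful that the curve is (or may be base-changed so that) all endomorphisms are defined over the residue field — which is exactly the content already available from Proposition~\ref{prop:endsdefined}. In a paper of this type I would simply cite Deuring's original work and a modern reference such as Voight or Silverman for these local facts rather than reproving them.
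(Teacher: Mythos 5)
The paper does not prove this theorem; it is stated as a classical result and attributed to Deuring, with the reference to \cite{Deuring}, which is exactly the route you suggest at the end of your sketch. Your outline is the standard modern proof (Tate-module / Dieudonn\'e-module approach) and is essentially correct, so there is nothing in the paper to compare it against beyond the citation.

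One small gap worth flagging in your first stage: to conclude the center is $\Q$ you dismiss only the possibility that the center is imaginary quadratic, but for a $4$-dimensional $\Q$-division algebra $B$ with center $K$ one has $[B:K][K:\Q]=4$ with $[B:K]$ a perfect square, so the two cases are $[K:\Q]=1$ (quaternion algebra over $\Q$) and $[K:\Q]=4$ (a quartic field). Ruling out the quartic-field case needs an extra word: e.g.\ every $\varphi\in\End(E)$ satisfies a monic quadratic $x^2-\tr(\varphi)x+\deg(\varphi)$, so $B$ cannot contain a primitive element of degree $4$; or, equivalently, $B\otimes\Q_\ell$ embeds in $M_2(\Q_\ell)$ and a commutative subalgebra of $M_2(\Q_\ell)$ has dimension at most $2$. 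Either fix is immediate given the Tate-module input you already invoke. Also, the phrase ``$E$ has ordinary-enough behavior away from $p$'' is misleading --- $E$ is supersingular and Tate's isogeny theorem needs no such hypothesis; what you actually need is that Frobenius acts as a scalar on $T_\ell E$ after the base change supplied by Proposition~\ref{prop:endsdefined}, so that $\End(E)\otimes\Z_\ell\cong\End_{\Gal}(T_\ell E)=M_2(\Z_\ell)$. With those two points tightened, the sketch is a correct and complete outline of Deuring's theorem.
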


Under this isomorphism, taking the dual isogeny on $\End(E)$ corresponds to the canonical
involution in the quaternion algebra, and thus the degrees and traces of
endomorphisms correspond to reduced norms and reduced traces of elements in the
quaternion algebra. 


\begin{lemma}[\cite{Piz80}, Proposition 5.1]\label{lem:b_p_infty} $B_{p, \infty}$ can be explicitly given
  as 
	\begin{enumerate}
		\item[$(i)$]$B_{p,\infty}=\left(\frac{-1,-1}{\Q}\right)$ if $p=2$;
		\item[$(ii)$]$B_{p,\infty}=\left(\frac{-1,p}{\Q}\right)$ if $p\equiv 3 \pmod 4$;
		\item[$(iii)$]$B_{p,\infty}=\left(\frac{-2,-p}{\Q}\right)$ if $p\equiv 5 \pmod 8$ and 
		\item[$(iv)$]$B_{p,\infty}=\left(\frac{-p,-q}{\Q}\right)$
                  if $p\equiv 1 \mod 8$, where $q\equiv 3 \mod 4$ is a
                  prime such that $q$ is not a square modulo $p$.
	\end{enumerate}
\end{lemma}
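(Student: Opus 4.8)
The plan is to verify, for each of the four cases, that the quaternion algebra $\left(\frac{a,b}{\Q}\right)$ given in the statement is ramified exactly at $p$ and $\infty$, and then invoke the fact (a consequence of the classification of quaternion algebras over $\Q$, see e.g.~\cite{Voight}) that a quaternion algebra over $\Q$ is determined up to isomorphism by its (finite, even-cardinality) set of ramified places. Since $B_{p,\infty}$ is by definition the unique such algebra ramified exactly at $\{p,\infty\}$, it suffices to check this ramification set for each candidate. The main computational tool is the Hilbert symbol: for a place $v$ of $\Q$, the algebra $\left(\frac{a,b}{\Q}\right)$ is ramified at $v$ if and only if the Hilbert symbol $(a,b)_v = -1$, i.e. the conic $ax^2 + by^2 = z^2$ has no nontrivial $\Q_v$-point. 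One then uses the standard reciprocity relation $\prod_v (a,b)_v = 1$ to reduce the number of places that must be checked by hand.

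First I would handle case $(i)$, $p=2$: for $\left(\frac{-1,-1}{\Q}\right)$ one checks that $(-1,-1)_\infty = -1$ (the form $-x^2-y^2-z^2$ is anisotropic over $\R$) and $(-1,-1)_2 = -1$ (a short $2$-adic computation, or the observation that $-1$ is not a sum of two squares in $\Q_2$), while $(-1,-1)_q = 1$ for all odd primes $q$ since $-1$, being a unit with both $a,b$ units, gives a tame symbol that is trivial away from $2$. By Hilbert reciprocity the product over all places is $1$, consistent with ramification exactly at $\{2,\infty\}$. Next, case $(ii)$, $p \equiv 3 \pmod 4$: for $\left(\frac{-1,p}{\Q}\right)$ one has $(-1,p)_\infty = -1$ since $p>0$ makes $-x^2+py^2-z^2$... actually $-x^2 + p y^2 = z^2$ has real solutions, so $(-1,p)_\infty = 1$; wait — I should be careful: $(-1,p)_\infty = 1$ because $p>0$. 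Then $(-1,p)_2 = (-1)^{(p-1)/2}\cdot\text{(something)}$; for $p\equiv 3\pmod 4$ the symbol $(-1,p)_2 = -1$. Hmm, this shows $\infty$ is \emph{not} ramified, which contradicts the claim. Let me recompute: the standard fact is that $\left(\frac{-1,-p}{\Q}\right)$ is ramified at $\infty$; with $+p$ one must recheck. The cleanest route is: compute $(a,b)_q$ for every finite $q$ using the explicit odd-prime and dyadic Hilbert symbol formulas, show it is $-1$ only at $q=p$, and then conclude $(a,b)_\infty = -1$ by reciprocity — this avoids sign errors at $\infty$ entirely.

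So the uniform strategy across all four cases is: (1) for each odd prime $q \neq p$, use the formula $(a,b)_q = 1$ whenever $q \nmid a$ and $q\nmid b$, and the formula $(u, q^k w)_q = \left(\frac{u}{q}\right)^k$-type expression when exactly one of $a,b$ is divisible by $q$, to conclude the symbol is trivial there (here in cases $(ii)$–$(iv)$ the choice of the auxiliary prime $q$ with $\left(\frac{q}{p}\right) = -1$ and $q\equiv 3\pmod 4$ is exactly what is needed to kill the symbol at $q$ and at $2$ while forcing it to be $-1$ at $p$); (2) compute the $2$-adic symbol $(a,b)_2$ using the explicit formula in terms of $a,b \bmod 8$, where the congruence condition on $p$ ($p\equiv 3\pmod 4$, $p\equiv 5\pmod 8$, $p\equiv 1\pmod 8$) is precisely tuned to make $(a,b)_2 = 1$; (3) compute $(a,b)_p$ and check it equals $-1$, using that $-1,-2$ are or are not squares mod $p$ according to the congruence — e.g. in case $(iii)$, $p\equiv 5\pmod 8$ means $-2$ is a nonsquare mod $p$, so the conic $-2x^2 - p y^2 = z^2$ has no $\Q_p$-point; (4) invoke reciprocity to get $(a,b)_\infty = -1$ for free, or check it directly from signs. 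Then the ramification set is exactly $\{p,\infty\}$ and the isomorphism $B_{p,\infty} \cong \left(\frac{a,b}{\Q}\right)$ follows.

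The main obstacle I expect is bookkeeping with the dyadic Hilbert symbol $(a,b)_2$, since its explicit formula depends on $a$ and $b$ modulo $8$ (and is genuinely subtle when both entries are even, as in cases $(i)$ and $(iii)$), and getting the congruence conditions on $p$ to line up correctly with the triviality of this symbol is where errors creep in; the choice of auxiliary prime $q$ in case $(iv)$ must simultaneously satisfy $q\equiv 3\pmod 4$ (to control the $2$-adic and archimedean behavior) and $\left(\frac{q}{p}\right) = -1$ (to force ramification at $p$ via quadratic reciprocity), and verifying such a $q$ exists is a density argument one should mention. Everything else is a routine, if lengthy, symbol computation, and in fact the result is classical — one could alternatively just cite~\cite{Piz80} or~\cite{Voight} and sketch only the archimedean and $p$-adic checks.
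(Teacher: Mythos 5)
The paper offers no proof of this lemma at all --- it simply cites Pizer, Proposition 5.1 --- so your Hilbert-symbol computation is a genuine proof sketch where the paper gives a citation, and the overall strategy is exactly right: a quaternion algebra over $\Q$ is determined by its (finite, even) set of ramified places, so one verifies $(a,b)_v = -1$ precisely for $v\in\{p,\infty\}$, using the local Hilbert-symbol formulas and Hilbert reciprocity as a consistency check.

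The hesitation you hit in case~$(ii)$ is not a sign error on your part; it is a typo in the paper. With $\left(\frac{-1,\,p}{\Q}\right)$ and $p>0$ the archimedean symbol is $(-1,p)_\infty = +1$ (one entry positive suffices), so this algebra is \emph{not} ramified at $\infty$; indeed one computes $(-1,p)_2=-1$ and $(-1,p)_p=\left(\frac{-1}{p}\right)=-1$, so it is ramified at $\{2,p\}$, not $\{p,\infty\}$. Pizer's actual Proposition 5.1 (and the paper's own usage, e.g.\ Example~\ref{ex:p=31} where $j^2=-31$) has $\left(\frac{-1,\,-p}{\Q}\right)$. Your instinct to ``avoid $\infty$ by reciprocity'' is the wrong reaction here: $\infty$ is the \emph{easiest} place to compute, $(a,b)_\infty=-1$ iff both $a<0$ and $b<0$, and the direct check exposes the error rather than concealing it. With $-p$ in place of $p$, your outline goes through cleanly: $(-1,-p)_\infty=-1$; $(-1,-p)_2=+1$ because $p\equiv3\pmod4$ forces $-p\equiv1\pmod4$; $(-1,-p)_p=\left(\frac{-1}{p}\right)=-1$; all other places are trivially split.

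One small completeness point for case~$(iv)$: besides checking $(-p,-q)_p=-1$ and $(-p,-q)_2=+1$, you must also check $(-p,-q)_q=\left(\frac{-p}{q}\right)=+1$, which uses $q\equiv3\pmod4$ and quadratic reciprocity (and is where the seemingly redundant pair of conditions on $q$ both earn their keep); and as you note, the existence of such a $q$ is a Dirichlet density argument over residue classes mod $4p$. With these points and the $-p$ correction, your proof is sound and is essentially the standard derivation of Pizer's list.
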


The quaternion algebra
$B_{p,\infty}$ is an inner product space with respect to the bilinear
form
\[
\langle x,y \rangle =\frac{\Nrd(x+y)- \Nrd(x)-\Nrd(y)}{2}.
\]
The basis $\{1,i,j,ij\}$ is an orthogonal basis with respect to this
inner product.




\subsection{Computing endomorphism rings from cycles in the
  $\ell$-isogeny graphs}\label{sec: computational aspects}


Suppose we have an order in $\End(E)$ generated by two cycles in
$G(p,\ell)$, which we embed as $\mathcal{O}\subseteq
B_{p,\infty}$. Suppose we also have another order
$\mathcal{O}'\subseteq B_{p,\infty}$. We want to check whether
$\mathcal{O}\simeq\mathcal{O}'$ or not. 

One can check this via using the fact that two orders are isomorphic
if and only if they are conjugate; this follows from the
Skolem-Noether theorem, see~\cite[Lemma 17.7.2]{Voight}, for
example. One can do this by showing that as lattices in a quadratic
space $\End(E)\otimes \QQ \isom B_{p,\infty}$, the two lattices are
isometric under the quadratic form induced by $\Nrd$. Thus, we can
check whether $\mathcal{O}\simeq \mathcal{O}'$ by computing Gram
matrices for a basis of each, and checking whether the matrices are
conjugate by an orthogonal matrix. The following proposition, which is
Corollary~4.4 in~\cite{Neb98}, makes this remark explicit.

\begin{proposition}
 Two orders $\OO,\OO'\subseteq B_{p,\infty}$ are conjugate if and only
 if they are isometric as lattices with respect to the inner product
 induced by $\Nrd$. In particular, for
  $m,n\in \{1,\ldots,4\}$, let $x_m,y_n$ be elements in the
  quaternion algebra $B_{p,\infty}$ such that
  $\mathcal{O}_1= \langle x_1,x_2,x_3,x_4\rangle$ and
  $\mathcal{O}_2= \langle y_1,y_2,y_3,y_4\rangle$ are orders in
  $B_{p,\infty}$.
  If $\Trd(x_m \overline{x_n})=\Trd(y_m\overline{y_n})$
  for $m,n \in \{1,2,3,4\}$, then $\mathcal{O}_1 \cong \mathcal{O}_2$.
\end{proposition}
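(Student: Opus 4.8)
The plan is to prove the two halves of the biconditional, then observe that the ``in particular'' clause follows from the harder (if) direction. The key observation is that for any element $\alpha$ in a quaternion algebra we have $\alpha\overline{\alpha} = \Nrd(\alpha) \in \Q$, so the inner product $\langle x, y\rangle = \tfrac12(\Nrd(x+y) - \Nrd(x) - \Nrd(y))$ expands as $\tfrac12(x\overline{y} + y\overline{x}) = \tfrac12\Trd(x\overline{y})$. Hence the Gram matrix of a basis $x_1,\dots,x_4$ of an order with respect to the norm form is exactly $\bigl(\tfrac12\Trd(x_m\overline{x_n})\bigr)_{m,n}$, and two orders are isometric as quadratic lattices precisely when, for suitable choices of bases, these Gram matrices coincide.

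First I would establish the easy direction: if $\OO$ and $\OO'$ are conjugate, say $\OO' = \gamma\OO\gamma^{-1}$ for some $\gamma\in B_{p,\infty}^\times$, then conjugation by $\gamma$ is a ring automorphism of $B_{p,\infty}$ fixing the center $\Q$, hence commutes with the canonical involution and preserves $\Nrd$; therefore it is an isometry carrying the lattice $\OO$ onto $\OO'$. For the converse (the substantive direction), suppose $\OO$ and $\OO'$ are isometric as lattices with the norm form. The claim is that any isometry $\sigma: \OO \to \OO'$ extends $\Q$-linearly to an isometry of the quadratic space $(B_{p,\infty}, \Nrd)$, and that any norm-preserving $\Q$-linear bijection $B_{p,\infty} \to B_{p,\infty}$ which fixes $1$ (or can be arranged to do so) is either a $\Q$-algebra automorphism or an anti-automorphism; by Skolem--Noether, every $\Q$-algebra automorphism of $B_{p,\infty}$ is inner, and composing with the canonical involution handles the anti-automorphism case, so in all cases $\OO$ and $\OO'$ are conjugate. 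This is precisely the content cited from \cite[Lemma 17.7.2]{Voight} and \cite[Corollary 4.4]{Neb98}, so at this level of detail I would invoke those references rather than redo the argument; the honest statement is that the proposition \emph{is} a repackaging of those two results, and the new content is just the translation of ``isometric'' into the explicit trace condition.

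For the ``in particular'' statement, I would argue as follows. Suppose $\Trd(x_m\overline{x_n}) = \Trd(y_m\overline{y_n})$ for all $m,n\in\{1,2,3,4\}$. Define the $\Q$-linear map $\sigma: B_{p,\infty}\to B_{p,\infty}$ by $\sigma(x_m) = y_m$; since $x_1,\dots,x_4$ is a $\Q$-basis of $B_{p,\infty}$ (being a $\Z$-basis of the order $\OO_1$, which has rank $4$) and likewise for the $y_n$, this $\sigma$ is a $\Q$-linear isomorphism carrying $\OO_1$ onto $\OO_2$. By the expansion above, for $x = \sum a_m x_m$ we have $\Nrd(x) = \tfrac12\Trd(x\overline{x}) = \tfrac12\sum_{m,n} a_m a_n \Trd(x_m\overline{x_n})$, and the identical formula with the $y$'s computes $\Nrd(\sigma(x))$; since the Gram matrices agree, $\Nrd(\sigma(x)) = \Nrd(x)$ for all $x$, so $\sigma$ is an isometry. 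By the main equivalence, $\OO_1$ and $\OO_2$ are conjugate, hence isomorphic as rings by Skolem--Noether.

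The main obstacle is not any computation but rather deciding how much of the Skolem--Noether argument to reproduce versus cite: the genuinely delicate point is why a lattice isometry of the orders extends to a \emph{ring}-theoretic conjugation of the ambient algebra, which uses that a norm-preserving linear map of $B_{p,\infty}$ fixing $1$ must respect multiplication up to the canonical involution. I would handle this by stating it as a lemma attributed to \cite{Neb98} and \cite[Lemma 17.7.2]{Voight}, giving the one-line reason (Skolem--Noether makes algebra automorphisms inner, and the involution covers anti-automorphisms), and then spend the written proof on the elementary but useful Gram-matrix identity $\langle x,y\rangle = \tfrac12\Trd(x\overline{y})$, which is what makes the criterion checkable in practice.
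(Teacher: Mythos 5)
Your proposal is correct and takes essentially the same route as the paper: cite \cite[Corollary 4.4]{Neb98} for the equivalence between conjugacy and isometry, then observe that the trace condition $\Trd(x_m\overline{x_n})=\Trd(y_m\overline{y_n})$ makes the linear extension of $x_m\mapsto y_m$ an isometry of the two lattices. The paper leaves the identity $\langle x,y\rangle = \tfrac12\Trd(x\overline{y})$ implicit, whereas you spell it out, which is a helpful but not substantively different elaboration.
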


\begin{proof}
The first statement is~\cite[Corollary 4.4]{Neb98}. The second
statement follows then from the first:
the map $x_m\to y_n$ 
extends linearly to an isometry of lattices in
$B_{p,\infty}$. This implies that
$\mathcal{O}_1$ and $\mathcal{O}_2$ are conjugate in $B_{p,\infty}$ and hence isomorphic as orders. 
\end{proof}

Thus if we have two cycles in $G(p,\ell)$ passing through $E(j)$ which
correspond to endomorphisms $\alpha,\beta\in \End(E(j))$, we can generate
an order 
\[
\OO=\langle 1,\alpha,\beta,\alpha\beta\rangle=\langle
x_0,x_1,x_2,x_3 \rangle \subseteq \End(E) \otimes_{\Z} \Q.
\]
 Also, suppose we have an order $\OO'=\langle
y_0,y_1,y_2,y_3\rangle\subseteq B_{p,\infty}$. Then we can check whether
$\OO\simeq \OO'$ by comparing $\tr(x_m\widehat{x_n})$ and
$\Trd(y_m\overline{y_n})$. This idea is used in our examples in
\S\ref{sec: examples}. Additionally, we use this idea in
Theorem~\ref{thm:nonmaximal order} to produce a geometric
obstruction to generating $\End(E(j))$ by two cycles in $G(p,\ell)$. 

\begin{lemma}
  Let $\{a_1,\ldots,a_e\}$ be a cycle beginning and ending at a vertex
  $E(j)$. Then the endomorphism of $E(j)$ corresponding to this cycle
  has degree $\ell^e$.
\end{lemma}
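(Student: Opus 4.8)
The plan is to unwind the definition of a cycle in $G(p,\ell)$ and see that the associated endomorphism is literally a composition of $e$ degree-$\ell$ isogenies, then invoke multiplicativity of degree. First I would recall from Definition~\ref{deg: isogeny graph} that an edge $a_k$ of $G(p,\ell)$ from $E(j_{k-1})$ to $E(j_k)$ is identified with a cyclic isogeny $\phi_k\colon E(j_{k-1})\to E(j_k)$ of degree $\ell$ (unique up to post-composition with an automorphism of the target, which does not affect the degree). A cycle $\{a_1,\dots,a_e\}$ beginning and ending at $E(j)$ means a closed walk: there are vertices $E(j)=E(j_0),E(j_1),\dots,E(j_e)=E(j)$ with $a_k$ an edge from $E(j_{k-1})$ to $E(j_k)$. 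The endomorphism corresponding to this cycle is then by definition the composition
\[
\alpha=\phi_e\circ\phi_{e-1}\circ\cdots\circ\phi_1\colon E(j)\to E(j).
\]

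The key step is then simply that degree is multiplicative under composition of isogenies: $\deg(\psi\circ\varphi)=\deg(\psi)\cdot\deg(\varphi)$, which is standard (e.g. \cite[III.4]{AEC}, as degree as a rational map is multiplicative, or one can see it on kernels for the separable part together with the inseparable degree). Applying this $e-1$ times gives $\deg(\alpha)=\prod_{k=1}^{e}\deg(\phi_k)=\ell^e$. One small point worth addressing is that $\alpha$ is indeed nonzero, hence an honest endomorphism and not the zero map: each $\phi_k$ is an isogeny, so nonzero, and a composition of nonzero isogenies is nonzero (an isogeny is surjective on $\overline{\F}_p$-points), so the composition is again an isogeny and the degree formula applies. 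I should also note that the answer is independent of the ambiguity in the identification of edges with isogenies, since replacing $\phi_k$ by $\iota\circ\phi_k$ for an automorphism $\iota$ changes neither the degree of $\phi_k$ nor the fact that the walk composes.

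There is no real obstacle here; the only thing to be slightly careful about is bookkeeping the direction of edges and the order of composition so that the domains and codomains match up, and making sure the ``cycle'' is interpreted as a walk in the directed graph (so that the composition is defined) rather than merely a subset of edges. I expect the entire argument to be two or three lines once the notation for the intermediate vertices and isogenies is fixed.
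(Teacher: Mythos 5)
Your proof is correct and takes the same approach as the paper: identify each edge with a degree-$\ell$ isogeny and invoke multiplicativity of degree under composition. The extra remarks about nonvanishing of the composition and independence of the choice of edge representatives are fine but not needed beyond what the paper records.
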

\begin{proof}
  Each edge $a_k$ represents an $\ell$-isogeny, which has degree
  $\ell$. Composition of $N$ isogenies of degree $\ell$ results in an
  isogeny of degree $\ell^N$. 
\end{proof}

\begin{theorem}
  Let $C=\{a_1,\ldots,a_e\}$ be a cycle in $G(p,\ell)$ beginning and
  ending at a vertex $E(j)$ corresponding to an endomorphism of
  $E(j)$.  Then the (reduced) trace of $C$ interpreted as an element
  of $\End(E(j))$ can be computed in time polynomial in $\ell$,
  $\log p$, and $e$.
\end{theorem}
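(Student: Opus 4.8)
The plan is to reduce the computation of the reduced trace of the cycle $C \in \End(E(j))$, which is an endomorphism $\alpha$ of degree $\ell^e$, to the problem of computing the action of $\alpha$ on a suitable torsion subgroup of $E(j)$, in the spirit of Schoof's point-counting algorithm. Since $\alpha$ satisfies the characteristic equation $\alpha^2 - t\alpha + \ell^e = 0$ in $\End(E(j))$, where $t = \Trd(\alpha)$ is the integer we want, and since $|t| \le 2\sqrt{\ell^e} = 2\ell^{e/2}$ by the Hasse bound applied to the endomorphism (equivalently, by positivity of the norm form on $B_{p,\infty}$), it suffices to determine $t$ modulo enough small primes $r$ so that their product exceeds $4\ell^{e/2}$; by the prime number theorem one needs only $O(e \log \ell + \log\log p)$ such primes, each of size $O(e \log \ell)$, so this congruence-and-CRT bookkeeping is polynomial in $e$, $\log\ell$, and $\log p$.

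First I would make $\alpha$ explicit as a rational map: composing the $e$ edges of $C$ using V\'elu's formulas (\S\ref{sec:velusFormula}) produces equations for $\alpha$ of degree $\ell^e$, which have size polynomial in $\ell^e$ — here one must be a little careful, since the naive composition has description size exponential in $e$, so one instead keeps $\alpha$ only in the form needed to compute its action modulo the $r$-division polynomial $\psi_r(x)$, working in the ring $\F_{q}[x,y]/(\psi_r(x), y^2 - x^3 - Ax - B)$, whose elements have size polynomial in $r$ and $\log q$; composing $e$ isogenies in this quotient ring costs $\mathrm{poly}(e, r, \log q)$. By Proposition~\ref{prop:endsdefined} and Corollary~\ref{cor:isogenydefined}, $\alpha$ and all the intermediate isogenies are defined over $\F_{p^{2d}}$ with $d \in \{1,2,3,6\}$, so we may take $q = p^{2d}$ and all arithmetic stays over a field of size polynomial in $p$. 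For each small prime $r \ne p, \ell$, one then checks, by manipulating $x$- and $y$-coordinates in the above quotient ring, for which residue $\tau \in \{0,\dots,r-1\}$ the relation $\alpha^2(P) - [\tau]\alpha(P) + [\ell^e \bmod r]P = O$ holds identically for the generic $r$-torsion point $P$; that $\tau$ is $t \bmod r$. Finally I would assemble $t$ from the residues by the Chinese Remainder Theorem and the bound $|t| \le 2\ell^{e/2}$.

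The main obstacle is controlling the size of the representation of $\alpha$: a cycle of length $e$ in $G(p,\ell)$ gives an endomorphism of degree $\ell^e$, and writing that endomorphism as an honest pair of rational functions has degree $\ell^e$ and hence bit-size exponential in $e$, which would ruin the complexity claim. The resolution is that we never need $\alpha$ globally — we only need its restriction to $E[r]$ for the various small $r$ — and each $\ell$-isogeny in the chain can be reduced modulo $\psi_r$ before the next composition, so the working data stays of size $\mathrm{poly}(r,\log q)$ throughout the $e$-fold composition; tracking this reduction carefully, and verifying that the improvements of \cite{SS15} to Schoof's algorithm apply verbatim in this setting (so that each $r$ may be taken of size $O(\log(\ell^e)) = O(e\log\ell)$ and the per-prime cost is genuinely polynomial), is the technical heart of the argument. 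The remaining steps — V\'elu's formulas, the Hasse-type bound from the norm form on $B_{p,\infty}$, and the CRT reconstruction — are routine. I would defer the detailed complexity accounting to the Appendix, where the generalization of Schoof's algorithm is carried out in full.
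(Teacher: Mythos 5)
Your proposal takes essentially the same route as the paper's Appendix: a modified Schoof-type algorithm that determines $\tr(\alpha)$ modulo small primes $m$ by computing the induced action $\alpha_m$ on $E[m]$ via V\'elu's formulas, reducing modulo the $m$-division polynomial after each of the $e$ compositions (the key step that keeps the representation size polynomial in $e$ rather than $\ell^e$), testing the characteristic relation $\alpha_m^2 - [\tau]\alpha_m + [\ell^e \bmod m] = 0$, and reconstructing $\tr(\alpha)$ via CRT using the bound on the trace in terms of the norm. The only cosmetic difference is that you invoke the sharp Hasse-type bound $|\tr(\alpha)| \le 2\ell^{e/2}$ directly, whereas the paper's Lemma on the trace bound is stated in a slightly weaker form (though its proof establishes the sharp bound), and in both cases this yields $L = O(e\log\ell)$ for the size of the primes, so the resulting complexity analysis is the same.
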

\begin{proof}
This is proved in the appendix.
\end{proof}
In fact, some of the traces can be recognized immediately without resorting to the modification of Schoof's algorithm.

\begin{lemma}\label{lemma:traceofconjugate}
  The cycles corresponding to the multiplication-by-$\ell^n$ map ($n$ of
  the $\ell$-isogenies followed by their dual isogenies in reverse
  order) have trace $2\ell^n$. Suppose $\phi:E\to E'$ is an isogeny
  and $\rho\in \End(E')$. Then
  $\tr(\hat\phi \circ \rho \circ \phi) = \deg(\phi) \cdot \tr(\rho)$.
\end{lemma}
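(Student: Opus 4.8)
The plan is to prove the two assertions separately, treating the first as a special case that follows from the second together with the formula $\Trd(x\bar x) = \Nrd(x) + \Nrd(x) - \ldots$; actually the cleanest route is to establish the general identity $\tr(\hat\phi \circ \rho \circ \phi) = \deg(\phi)\cdot \tr(\rho)$ first, and then deduce the multiplication-by-$\ell^n$ statement by taking $\rho = \mathrm{id}_{E'}$. For the general identity, I would pass through the Deuring correspondence (Theorem~\ref{thm:deuringcorrespondence}): the degree and trace of an isogeny correspond to the reduced norm and reduced trace in $B_{p,\infty}$, and taking duals corresponds to the canonical involution. So it suffices to prove the purely algebraic statement: for any elements $\phi, \rho$ in a quaternion algebra $B$ over $\Q$, one has $\Trd(\bar\phi\,\rho\,\phi) = \Nrd(\phi)\,\Trd(\rho)$.

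For that algebraic identity, first I would reduce to the case where $\phi$ is invertible (if $\phi = 0$ both sides are $0$; an isogeny is nonzero, so $\Nrd(\phi) \neq 0$ and $\phi$ is a unit in $B$). Then $\bar\phi = \Nrd(\phi)\,\phi^{-1}$, so $\bar\phi\,\rho\,\phi = \Nrd(\phi)\,\phi^{-1}\rho\,\phi$, and since the reduced trace is invariant under conjugation (it is the trace of left-multiplication on $B$, or equivalently a coefficient of the reduced characteristic polynomial, which conjugation preserves), we get $\Trd(\bar\phi\,\rho\,\phi) = \Nrd(\phi)\,\Trd(\phi^{-1}\rho\,\phi) = \Nrd(\phi)\,\Trd(\rho)$. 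Translating back through Deuring gives $\tr(\hat\phi\circ\rho\circ\phi) = \deg(\phi)\cdot\tr(\rho)$. One small point to check is that $\hat\phi\circ\rho\circ\phi$ really is an endomorphism of $E$ (it is: $\phi\colon E\to E'$, $\rho\colon E'\to E'$, $\hat\phi\colon E'\to E$), so its trace is defined.

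For the first sentence, apply the identity with $E' = E$ replaced by the target of the $n$-fold composition $\psi = \phi_n\circ\cdots\circ\phi_1$ (degree $\ell^n$) and $\rho = \mathrm{id}$: the cycle in question is $\hat\psi\circ\psi = \hat\phi_1\circ\cdots\circ\hat\phi_n\circ\phi_n\circ\cdots\circ\phi_1$, which equals $[\ell^n]$ on $E$, and indeed $\tr([\ell^n]) = 2\ell^n$ since $[\ell^n]$ corresponds to the scalar $\ell^n \in \Q \subseteq B_{p,\infty}$ with $\Trd(\ell^n) = 2\ell^n$. Alternatively one can cite directly that $\hat\psi\psi = [\deg\psi] = [\ell^n]$, which is the defining property of the dual isogeny recalled in \S\ref{sec: isogeny graphs}.

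I do not anticipate a serious obstacle here: the only thing to be careful about is making the passage through the Deuring correspondence rigorous, i.e.\ invoking precisely that $\tr$ and $\deg$ on $\End(E)\otimes\Q$ coincide with $\Trd$ and $\Nrd$ on $B_{p,\infty}$ and that duality is the involution, all of which is recorded in the discussion following Theorem~\ref{thm:deuringcorrespondence}. A reader who prefers to stay on the geometric side can instead note that $\tr(\mu) = \deg(\mu) + 1 - \deg(1-\mu)$ for $\mu\in\End(E)$ and expand $\deg(1 - \hat\phi\rho\phi)$ using bilinearity of the degree pairing, but the quaternion-algebra argument is shorter and I would present that one.
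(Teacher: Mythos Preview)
Your algebraic identity $\Trd(\bar\phi\,\rho\,\phi)=\Nrd(\phi)\,\Trd(\rho)$ in a quaternion algebra is correct, and the conjugation-invariance argument is fine. The gap is in the reduction step: you treat $\phi$ and $\rho$ as elements of the \emph{same} quaternion algebra $B$, but in the lemma $\phi\in\Hom(E,E')$ is an isogeny between possibly distinct curves, not an endomorphism. The Deuring correspondence identifies $\End(E)\otimes\Q$ and $\End(E')\otimes\Q$ separately with $B_{p,\infty}$, but $\phi$ does not live in either copy, so the sentence ``it suffices to prove the purely algebraic statement: for any elements $\phi,\rho$ in a quaternion algebra $B$'' is not justified as written.

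The paper handles exactly this point by citing Waterhouse: the map
\[
\End(E')\otimes\Q \longrightarrow \End(E)\otimes\Q,\qquad \rho\otimes 1\longmapsto \widehat{\phi}\rho\phi\otimes\tfrac{1}{\deg\phi}
\]
is an isomorphism of quaternion algebras, and such isomorphisms preserve the reduced trace; the identity follows at once. Your conjugation argument is, morally, the proof that quaternion-algebra isomorphisms preserve $\Trd$, so the content is the same once the setup is corrected. If you prefer to avoid the structural citation, a one-line geometric computation also works: using $\tr(\mu)=\mu+\widehat{\mu}$ and $\widehat{\widehat{\phi}\rho\phi}=\widehat{\phi}\widehat{\rho}\phi$, one has $\widehat{\phi}\rho\phi+\widehat{\phi}\widehat{\rho}\phi=\widehat{\phi}[\tr\rho]\phi=[\tr\rho]\,\widehat{\phi}\phi=[\tr\rho\cdot\deg\phi]$. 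Your derivation of the first sentence from the second with $\rho=\mathrm{id}$ is fine.
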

\begin{proof}
Let $\phi: E\to E'$ be an isogeny of supersingular elliptic curves. By
Proposition 3.9 of \cite{waterhouse}, the map 
\begin{align*}
\iota\otimes \text{id}: \End(E')\otimes \QQ &\to \End(E)\otimes \QQ \\
\rho\otimes 1 &\mapsto \widehat{\phi}\rho\phi\otimes\frac{1}{\deg(\phi)}
\end{align*}
is an isomorphism of quaternion algebras. It follows that
$\tr(\widehat{\phi}\rho\phi)=\deg(\phi) \tr(\rho)$.
 \end{proof}
\section{A condition for linear independence}\label{sec: sufficient conditions}
In this section, we prove a necessary and sufficient condition for two
endomorphisms $\alpha$ and $\beta$ to be linearly independent. To
prove this we need the notion of a {\em cycle which has no
  backtracking}. We first show that this notion is equivalent to the
corresponding endomorphism being primitive. Then, in
Theorem~\ref{thm:independent endos}, we characterize when two cycles
with no backtracking are linearly independent. To do this we use the
fact that if two endomorphisms are linearly dependent, then they
generate a subring of a quadratic imaginary field, and in particular,
they must commute. As a corollary, we obtain that two cycles through a
vertex $E(j)$ that do not have the same vertex set must be linearly independent.

\begin{definition}
An isogeny $\phi:E \to E'$ is {\em primitive} if it does not
factor through $[n]: E\to E$ for any natural number
$n>1$. 
\end{definition}
\begin{remark}
An isogeny $\phi:E\to E'$ is primitive if $\ker(\phi)$ does
not contain $E[n]$ for any $n>1$. 
\end{remark}
 \begin{definition}
Suppose $a_1,a_2$ are edges in $G(p,\ell)$ whose chosen
representatives are $\ell$-isogenies
$\phi: E(j) \to E(j')$, $\psi: E(j')\to E(j)$. We say that $a_2$ is
dual to $a_1$ if
$\hat{\phi} \in \Aut(E(j))\psi$. 
A cycle $\{a_1,\ldots,a_e\}$ in
$G(p,\ell)$ {\em has no backtracking} if $a_{i+1}$ is not dual to
$a_i$ for $i=1,\ldots,e-1$. 
\end{definition}
\begin{remark}
  Let $\{a_1,\ldots,a_e\}$ be a path in $G(p,\ell)$ and let
  $\phi_i: E(j_i)\to E(j_{i+1})$ be the chosen isogeny representing $a_i$ for
  $i=1,\ldots,e$. Suppose that $a_{k+1}$ is dual to $a_k$ for some
  $1\leq k\leq e-1$. Then we claim that the isogeny
\[
\phi = \phi_e\circ\cdots\circ\phi_1
\]
will not be primitive. Since $a_{k+1}$ is dual to $a_k$, there
exists $\rho\in \Aut(E(j_k))$ such that
$\phi_{k} = \widehat{\phi_{k+1}}\rho$. Then
$\phi_{k+1}\circ \phi_k = [\ell] \rho$, so $\phi$ factors through
$[\ell]$.
\end{remark}

Our definition of a cycle with no backtracking is less restrictive
than the notion of a simple cycle in \cite{Kohel}, which additionally
requires that there are no repeated vertices in the
cycle. Proposition~82 of~\cite{Kohel} shows that simple cycles in
$G(p,\ell)$ through $E(j)$ give rise to primitive endomorphisms. We
strengthen this result, proving in Lemma~\ref{lemma:nobacktracking}
below that cycles through $E(j)$ with no backtracking correspond
exactly to primitive endomorphisms.

Given a path in $G(p,\ell)$ of length $e$ between $j$ and $j'$,
there is an isogeny $\phi: E(j)\to E(j')$ of degree $\ell^e$ obtained
by composing isogenies representing the edges in the path. If this
path has no backtracking, the kernel of $\phi$ is a cyclic subgroup of
order $\ell^e$ in $E(j)[\ell^e]$. Conversely, given an
isogeny $\phi: E(j)\to E(j')$ with cyclic kernel of order $\ell^e$, 
there is a corresponding path in $G(p,\ell)$.
\begin{proposition}\label{prop: isog to path}
Suppose that $\phi: E(j)\to E(j')$ is an isogeny with cyclic kernel of
order $\ell^e$. There is a unique path in $G(p,\ell)$ such that the
factorization of $\phi$ into a chain of $\ell$-isogenies corresponds
to the edges in the path, and the path has no backtracking. 
\end{proposition}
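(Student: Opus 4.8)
The plan is to exploit the fact that a cyclic $\ell$-group has a \emph{unique} chain of subgroups, which will simultaneously pin down the sequence of curves and the labelled edges. Write $H=\ker\phi$; by hypothesis $H$ is cyclic of order $\ell^e$, so it admits a unique chain $0=G_0\subsetneq G_1\subsetneq\cdots\subsetneq G_e=H$ with $\#G_i=\ell^i$. First I would, for each $i$, choose a separable isogeny $\psi_i\colon E(j)\to E^{(i)}$ with kernel $G_i$ (so $E^{(0)}=E(j)$, and since $\ker\psi_e=H=\ker\phi$ we get $E^{(e)}\cong E(j')$); because $G_{i-1}\subseteq G_i$, the isogeny $\psi_i$ factors uniquely as $\lambda_i\circ\psi_{i-1}$ with $\lambda_i\colon E^{(i-1)}\to E^{(i)}$ an isogeny whose kernel $\psi_{i-1}(G_i)\cong G_i/G_{i-1}$ has order $\ell$. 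Each $\lambda_i$ is then an $\ell$-isogeny and determines an edge $a_i$ of $G(p,\ell)$ from $j(E^{(i-1)})$ to $j(E^{(i)})$, namely the edge labelled by the cyclic subgroup $\psi_{i-1}(G_i)\subseteq E^{(i-1)}[\ell]$. Since $\phi$ and $\lambda_e\circ\cdots\circ\lambda_1$ have the same kernel $H$, they agree under the identification of isogenies with equal kernel, so $\{a_1,\dots,a_e\}$ is a path whose associated factorization of $\phi$ into a chain of $\ell$-isogenies is $\lambda_e\circ\cdots\circ\lambda_1$.

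Next I would check that this path has no backtracking. If $a_{k+1}$ were dual to $a_k$ for some $1\le k\le e-1$, then the Remark preceding the statement shows $\lambda_{k+1}\circ\lambda_k$ factors through $[\ell]\colon E^{(k-1)}\to E^{(k-1)}$, so $E^{(k-1)}[\ell]\subseteq\ker(\lambda_{k+1}\circ\lambda_k)$. On the other hand $\ker(\lambda_{k+1}\circ\lambda_k)=\psi_{k-1}(\ker\psi_{k+1})=\psi_{k-1}(G_{k+1})$ is a homomorphic image of the cyclic group $G_{k+1}$, hence cyclic, whereas $E^{(k-1)}[\ell]\cong(\Z/\ell\Z)^2$ is not cyclic --- a contradiction. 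This step is precisely where the hypothesis that $\ker\phi$ is cyclic enters.

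For uniqueness, I would take an arbitrary path $\{b_1,\dots,b_f\}$ in $G(p,\ell)$ whose composition of chosen edge-representatives $\mu_f\circ\cdots\circ\mu_1$ has the same kernel as $\phi$; comparing degrees gives $\ell^f=\deg\phi=\ell^e$, so $f=e$. For each $i$, the partial composite $\mu_i\circ\cdots\circ\mu_1$ has kernel a subgroup of $H$ of order $\ell^i$, which must be $G_i$ since $H$ is cyclic. An induction on $i$ then shows that the starting vertex of $b_i$ equals $j(E^{(i-1)})$ and that $\ker\mu_i$, as a cyclic order-$\ell$ subgroup of $E^{(i-1)}[\ell]$, equals $\psi_{i-1}(G_i)=\ker\lambda_i$; since an edge of $G(p,\ell)$ out of a fixed vertex is determined by its kernel, this forces $b_i=a_i$.

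The only genuinely substantive input is the observation that the subgroup lattice of a cyclic $\ell$-group is totally ordered, after which existence, no-backtracking, and uniqueness all fall out, so I do not anticipate a real obstacle. The one place that needs care is the bookkeeping relating the abstract quotient curves $E^{(i)}$, the vertex curves $E(j(E^{(i)}))$, and the chosen edge-representatives, which differ from the $\lambda_i$ by automorphisms; phrasing the argument entirely in terms of kernels (cyclic order-$\ell$ subgroups of the $\ell$-torsion) and $j$-invariants keeps this manageable, and the extra automorphisms at $j=0,1728$ are harmless because an edge out of a vertex is still pinned down by its kernel.
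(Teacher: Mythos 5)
Your proof is correct and rests on the same essential observation as the paper's: the cyclic kernel $H$ has a unique filtration by subgroups $G_i$ of order $\ell^i$, these determine the edges, and no-backtracking follows because a cyclic kernel cannot contain $E[\ell]\cong(\Z/\ell\Z)^2$. The only difference is organizational: the paper runs an induction on $e$, peeling off one $\ell$-isogeny at a time and checking the no-backtracking condition only at the first junction (the rest absorbed by the inductive hypothesis), whereas you construct the whole chain $E^{(0)}\to\cdots\to E^{(e)}$ at once from the subgroup lattice, verify no-backtracking uniformly at each junction, and spell out the uniqueness argument (matching partial-composite kernels against the $G_i$) more explicitly than the paper does.
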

\begin{proof}
  The proof is by induction on $e$. If $e=1$, there is a unique edge
  corresponding to the isogeny $\phi:E(j)\to E(j')$, because each edge
  starting at $E(j)$ corresponds to a unique cyclic subgroup of
  $E(j)[\ell]$. Now suppose that the kernel of $\phi:E(j)\to E(j')$ is
  generated by a point $P$ of $E(j)$ of order $\ell^{e}$. There is an
  edge in $G(p,\ell)$ from $E(j)$ to another vertex $E(j_1)$ which is
  labeled by $\phi_1: E(j)\to E(j_1)$ and whose kernel is
  $\langle [\ell^{e-1}]P\rangle$. Then because $\phi([\ell^{e-1}]P)=0$, we
  have a factorization $\phi:=\psi\circ \phi_1$. Then
  $\psi: E(j_1)\to E(j')$ has degree $\ell^{e-1}$ and its kernel is cyclic
  of order $\ell^{e-1}$, generated by $\phi_1(P)$. Then there is a path of
  length $e-1$ between $E(j_1)$ and $E(j')$ with no backtracking by the
  inductive hypothesis. By concatenating with the edge corresponding
  to $\phi_1$, we have a path of length $e$ between $E(j)$ and
  $E(j')$. Note that the first edge in the path for $\psi$ can not be
  dual to the edge for $\phi_1$, because otherwise
  $E(j)[\ell]\subseteq \ker\phi$, which is cyclic by assumption.
\end{proof}

Given a path $C$ in $G(p,\ell)$ starting at $E(j)$, the {\em isogeny
  corresponding to $C$} is the isogeny obtained by composing the
isogenies represented the edges along the path. Conversely, given an
isogeny $\phi:E(j)\to E(j')$ with cyclic kernel, the {\em path
  corresponding to $\phi$} is the path constructed as
above. We remark that it is the kernel of an isogeny which
  determines the path in $G(p,\ell)$, so two distinct primitive
  isogenies will determine the same path if they have the same
  kernel. This path is only unique because we fix an isogeny
  representing each edge.

\begin{lemma}\label{lemma:nobacktracking}
Let $\{a_1,\ldots,a_e\}$ be a cycle in $G(p,\ell)$ through the vertex $E(j)$ with
corresponding endomorphism $\alpha\in \End(E(j))$. If the cycle has no backtracking, then 
the corresponding endomorphism $\alpha\in \End(E(j))$ is
primitive. Conversely, if $\alpha\in \End(E(j))$ is primitive and
$\deg(\alpha)=\ell^e$ for some $e\in \mathbb{N}$, the cycle in
$G(p,\ell)$ corresponding to $\alpha$ has no backtracking. 
\end{lemma}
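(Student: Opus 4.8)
The plan is to prove both implications by working with kernels rather than with the isogenies themselves, using the correspondence between cyclic isogenies and paths with no backtracking that was already established in Proposition~\ref{prop: isog to path} and the remark preceding it. For the forward direction, suppose $\{a_1,\dots,a_e\}$ is a cycle through $E(j)$ with no backtracking. Composing the chosen isogenies $\phi_i\colon E(j_i)\to E(j_{i+1})$ representing the edges gives the endomorphism $\alpha = \phi_e\circ\cdots\circ\phi_1$ of degree $\ell^e$. I would argue that because the path has no backtracking, $\ker\alpha$ is a \emph{cyclic} subgroup of $E(j)[\ell^e]$ of order $\ell^e$: this is exactly the claim made (in the converse direction) in the discussion before Proposition~\ref{prop: isog to path}, and one can prove it by induction on $e$, noting that the inductive step fails to produce a cyclic kernel precisely when consecutive edges are dual (as in the remark after the definition of backtracking, where $\phi_{k+1}\circ\phi_k = [\ell]\rho$ forces $E(j_k)[\ell]\subseteq\ker$). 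An isogeny with cyclic kernel cannot factor through $[n]$ for $n>1$, since $\ker[n]=E(j)[n]$ contains the full $n$-torsion, which is noncyclic for $n>1$; hence $\alpha$ is primitive.

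For the converse, suppose $\alpha\in\End(E(j))$ is primitive of degree $\ell^e$. By the remark after the definition of ``primitive,'' $\ker\alpha$ does not contain $E(j)[n]$ for any $n>1$; in particular it does not contain $E(j)[\ell]$. A subgroup of $E(j)[\ell^e]\cong(\Z/\ell^e\Z)^2$ of order $\ell^e$ that does not contain the $\ell$-torsion subgroup $E(j)[\ell]$ must be cyclic — this is a short exercise in the structure of finite abelian $\ell$-groups of rank at most $2$. So $\ker\alpha$ is cyclic of order $\ell^e$, and Proposition~\ref{prop: isog to path} applies: the path in $G(p,\ell)$ corresponding to $\alpha$ (i.e.\ to the factorization of $\alpha$ into $\ell$-isogenies dictated by its kernel) has no backtracking. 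Since $\alpha$ is an endomorphism, this path is a cycle through $E(j)$, and it is the cycle corresponding to $\alpha$.

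The one point requiring care — and the main (mild) obstacle — is the bookkeeping linking ``the cycle corresponding to $\alpha$'' in the converse statement to the data $\{a_1,\dots,a_e\}$: because we have fixed a representative isogeny for each edge, the endomorphism $\alpha$ determines a well-defined path only through its kernel, and two primitive endomorphisms with the same kernel (differing by an automorphism of $E(j)$) give the same path. I would state explicitly that ``the cycle corresponding to $\alpha$'' means the path produced by Proposition~\ref{prop: isog to path} applied to $\alpha$ viewed as an isogeny with cyclic kernel, matching the terminology fixed just before the lemma, so that no circularity arises. The two group-theoretic facts invoked — that a path with no backtracking yields a cyclic kernel, and that an order-$\ell^e$ subgroup of $(\Z/\ell^e)^2$ omitting the $\ell$-torsion is cyclic — are elementary and can be dispatched in a line each, the first by the induction already sketched in the remark after the backtracking definition and in Proposition~\ref{prop: isog to path}, the second by a direct structure-theory argument.
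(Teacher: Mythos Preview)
Your argument is correct and is in fact more self-contained than the paper's. The paper's proof simply cites two external results: for the forward direction it invokes Proposition~82 of Kohel's thesis (noting that Kohel's argument does not require the cycle to be simple), and for the converse it cites Proposition~10 of \cite{EHLMP} to obtain that a primitive $\alpha$ of degree $\ell^e$ has cyclic kernel, then applies Proposition~\ref{prop: isog to path} exactly as you do. You instead supply direct proofs of both cited facts: the forward direction via ``no backtracking $\Rightarrow$ cyclic kernel $\Rightarrow$ primitive,'' and the converse via the elementary observation that a non-cyclic subgroup of $E[\ell^e]\cong(\Z/\ell^e\Z)^2$ of order $\ell^e$ necessarily contains all of $E[\ell]$, contradicting primitivity. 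Your route avoids the external dependencies at the cost of spelling out two short group-theoretic lemmas; the paper's route is terser but less self-contained. The final step in the converse---invoking Proposition~\ref{prop: isog to path}---is identical in both.
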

\begin{proof}
The first statement is proved as Proposition~82 in~\cite{Kohel}. His
proof does not use the assumption that there are no repeated vertices
in the cycle. Now
assume that $\alpha\in \End(E(j))$ is primitive and
$\deg(\alpha)=\ell^e$. Then by Proposition~10 of~\cite{EHLMP}, the
kernel of $\alpha$ is cyclic, generated by
$P\in E[\ell^e]$. By Proposition~\ref{prop: isog to path}, the cycle
in $G(p,\ell)$ corresponding to $\alpha$ has no backtracking. 

\end{proof}

Suppose $\alpha\in \End(E(j))$ is an endomorphism of degree
$\ell^e$. We wish to describe what information we can infer about the
order $\Z[\alpha]$ of $\QQ(\alpha)$ from the cycle corresponding to
$\alpha$ in $G(p,\ell)$. We will show that we can detect when
$\Z[\alpha]$ is maximal at a prime above $\ell$. 

\begin{lemma}\label{lemma: nonzero trace}
  Let $\alpha \in \End(E(j))$ be a primitive endomorphism
  corresponding to a cycle $\{a_1,\ldots,a_e\}$ in $G(p,\ell)$ which
  begins at $E(j)$. Then $a_1$ is dual to $a_e$ if and only if $\tr(\alpha) \equiv
  0 \pmod{\ell}$. 
\end{lemma}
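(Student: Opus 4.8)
The plan is to translate the backtracking/no-backtracking dichotomy at the "seam" of the cycle into a statement about whether $\alpha$ factors through $[\ell]$ after composing with suitable isogenies, and then read off the trace condition from the norm form on the imaginary quadratic order $\Z[\alpha]$. Write $\alpha = \phi_e \circ \cdots \circ \phi_1$ as the composition of the chosen $\ell$-isogenies along the edges $a_1,\dots,a_e$, so $\phi_1 \colon E(j) \to E(j_1)$ and $\phi_e \colon E(j_{e-1}) \to E(j)$. Since the cycle has no backtracking at the interior vertices (it is primitive by Lemma~\ref{lemma:nobacktracking}), the only place an obstruction to "cyclicity of $\ker$ at the seam" can occur is between $a_e$ and $a_1$. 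The key observation is that $a_1$ is dual to $a_e$ precisely when $\phi_1 \circ \phi_e \in \Aut(E(j_{e-1}))\,[\ell]$, i.e.\ when the length-$e$ path, closed up into a length-$e$ \emph{cyclic} walk, fails to have no backtracking at exactly one spot; equivalently, $\langle \alpha \rangle$ together with its interior structure shows that $\widehat{\phi_1}$ and $\phi_e$ have the same kernel up to automorphism.

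First I would make precise the equivalence "$a_1$ dual to $a_e$ $\iff$ $\widehat{\alpha}\alpha$-type decomposition reveals a factor of $[\ell]$." Concretely: $a_1$ is dual to $a_e$ means $\widehat{\phi_e} \in \Aut(E(j_{e-1}))\phi_1'$, where $\phi_1' \colon E(j_{e-1})\to E(j_1)$ is the edge $a_1$ viewed starting from... — wait, more carefully, both $a_1$ and $a_e$ are edges touching $E(j)$: $a_1$ leaves $E(j)$, $a_e$ enters $E(j)$. "$a_1$ dual to $a_e$" should be interpreted as $\widehat{\phi_e}$ (which leaves $E(j)$) being, up to an automorphism of $E(j)$, the edge $a_1$; that is, $\ker \widehat{\phi_e} = \ker \phi_1$ as subgroups of $E(j)[\ell]$. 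Now $\ker\phi_1 = \langle [\ell^{e-1}]P\rangle$ where $P$ generates $\ker\alpha$ (using primitivity, so $\ker\alpha$ is cyclic of order $\ell^e$ by Proposition~10 of~\cite{EHLMP}), and $\ker\widehat{\phi_e}$ is the image under $\alpha$... hmm, rather: $\widehat{\phi_e} = \widehat{\phi_1}\circ\cdots\circ\widehat{\phi_{e-1}}\circ\widehat{\alpha}$ up to the appropriate identifications, and $\ker\widehat{\phi_e} = [\ell^{e-1}](\ker\widehat\alpha)$. Since $\ker\widehat\alpha = \alpha(E[\ell^e])$-type description, one shows $\ker\widehat{\phi_e} = \langle [\ell^{e-1}]\alpha(Q)\rangle$ for a suitable point $Q$ with $[\ell]Q$ related to $P$. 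The upshot I want is: $a_1$ dual to $a_e$ $\iff$ $\alpha(E(j)[\ell]) \subseteq \ker\phi_1 = \langle[\ell^{e-1}]P\rangle$, i.e.\ $\alpha$ maps the $\ell$-torsion into the first step of its own kernel.

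Next I would convert this into a congruence on $\tr\alpha$. Recall $\alpha \in \End(E(j))$ generates the imaginary quadratic order $\Z[\alpha] \subseteq \OO_K$ with $K = \Q(\alpha)$, where $\alpha$ satisfies $\alpha^2 - t\alpha + \ell^e = 0$ with $t = \tr(\alpha)$. Now $\ell \mid t$ $\iff$ $\ell \mid \alpha^2$ in $\Z[\alpha]$ (from the relation $\alpha^2 = t\alpha - \ell^e$), $\iff$ the ideal $(\alpha)$ is divisible by a prime of $\Z[\alpha]$ above $\ell$ that also divides $(\ell) = (\alpha,\widehat\alpha)$-related structure — more cleanly, $\ell \mid t$ $\iff$ $\alpha$ and $\widehat\alpha$ "share" the prime above $\ell$, i.e.\ $\ker\alpha \cap \ker\widehat\alpha \neq 0$ inside $E[\ell]$. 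And $\ker\alpha\cap\ker\widehat\alpha \neq 0$, combined with the factorization structure, is exactly the statement that the first $\ell$-isogeny in $\alpha$ (edge $a_1$) and the first $\ell$-isogeny in $\widehat\alpha$ (which is $\widehat{\phi_e}$, i.e.\ the dual of edge $a_e$) have a common kernel — that is, $a_1$ is dual to $a_e$. So the chain is: $a_1$ dual to $a_e$ $\iff$ $\ker\phi_1 = \ker\widehat{\phi_e}$ $\iff$ $\ker\alpha[\ell] = \ker\widehat\alpha[\ell]$ (the $\ell$-torsion of each kernel) $\iff$ $\ell^2 \mid \Nrd$ decomposition $\iff$ $\ell \mid \tr\alpha$.

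The main obstacle I expect is the bookkeeping in the middle step: correctly identifying the "first edge of $\widehat\alpha$" with "the dual of the last edge of $\alpha$" (this requires care with the $\Aut(E(j'))$-ambiguity in the chosen edge representatives, and with the fact that $\widehat{\phi_e\cdots\phi_1} = \widehat{\phi_1}\cdots\widehat{\phi_e}$), and pinning down exactly which $\ell$-torsion subgroups are in play so that "common first step" becomes the precise ideal-theoretic statement $\ell \mid \alpha^2$ in $\Z[\alpha]$. A clean way to organize this: show directly that $a_1$ dual to $a_e$ forces $\ker\alpha \supseteq$ a subgroup on which $\alpha$ acts like $[\ell]\circ(\text{something})$, hence $\alpha$ factors as $\alpha = \psi \circ [\ell] \circ \chi^{-1}$... no, rather that $\widehat{\phi_e}\circ\alpha\circ\widehat{\phi_1}$-type composite is divisible by $[\ell]$, giving via $\tr(\widehat\phi\rho\phi) = \deg\phi\cdot\tr\rho$ (Lemma~\ref{lemma:traceofconjugate}) and $\tr([\ell]\gamma) = \ell\tr\gamma$ the divisibility $\ell \mid \tr\alpha$; conversely $\ell\mid\tr\alpha$ makes $\Z[\alpha]$ non-maximal at $\ell$ or $\alpha^2 \equiv 0$, forcing the kernel overlap and hence backtracking at the seam. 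I would present it in the cleaner of these two forms once the factorization lemma above is nailed down.
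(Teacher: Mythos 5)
Your final sketch lands on the paper's proof, so the approach is right; the issue is that you wander through several sub-routes and the one to commit to is not the one you spend the most words on. For the direction $(a_1$ dual to $a_e) \Rightarrow \ell \mid \tr\alpha$: once $a_1$ is dual to $a_e$, one writes $\phi_e = \widehat{\phi_1}\,u$ for some $u\in\Aut(E(j))$, so $\alpha = \widehat{\phi_1}\,\alpha'\,\phi_1$ with $\alpha'\in\End(E(j_1))$, and Lemma~\ref{lemma:traceofconjugate} gives $\tr\alpha = \ell\cdot\tr\alpha'$; this is exactly what you propose. For the converse, commit to the mod-$\ell$ characteristic polynomial route, which is the paper's: on $E[\ell]$ the endomorphism $\alpha$ satisfies $x^2 - \tr(\alpha)x + \deg\alpha \equiv x^2 \pmod\ell$ because $\deg\alpha = \ell^e$, so $\ell\mid\tr\alpha$ forces $E[\ell]\subseteq\ker(\alpha^2)$, hence $\alpha^2$ is imprimitive; by Lemma~\ref{lemma:nobacktracking} the doubled cycle $\{a_1,\dots,a_e,a_1,\dots,a_e\}$ must backtrack, and since the original cycle has no backtracking the only possible location is the seam, so $a_1$ is dual to $a_e$. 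The ideal-theoretic detour through $\Z[\alpha]$ is precisely what to avoid here: $\ell\mid\tr\alpha$ is the regime in which $\ell$ may divide the conductor of $\Z[\alpha]$ (compare Lemma~\ref{lemma: ell-maximal}, which guarantees coprimality only when $a_1$ is \emph{not} dual to $a_e$), so the primes of $\Z[\alpha]$ above $\ell$ need not be invertible and the statements you gesture at about $(\alpha)$ and $(\widehat\alpha)$ sharing a prime above $\ell$ require justification you did not supply and the paper does not need.
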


\begin{proof}
  The endomorphism $\alpha$ determines an endomorphism
  $A = \alpha|_{E[\ell]}$ of $E[\ell]$. If
  $\tr(\alpha)\equiv 0 \pmod{\ell}$, then the characteristic
  polynomial of $A$ is $x^2$. Thus $E[\ell]\subseteq \ker(\alpha^2)$,
  so $\alpha^2$ is not primitive. Lemma~\ref{lemma:nobacktracking}
  implies that the cycle $\{a_1,\ldots,a_e,a_1,\ldots,a_e\}$ in
  $G(p,\ell)$ has backtracking, because the endomorphism
  corresponding to this cycle is $\alpha^2$. We must have that $a_1$ is dual to $a_e$ because $\alpha$ has no
  backtracking. 

  Conversely, assume $a_1$ is dual to $a_e$. Suppose that $a_e$ is an
  edge from the vertex $E(j')$, and let $\phi_1: E(j)\to E(j')$ be the
  isogeny corresponding to $a_1$ and let $\phi_e: E(j')\to E(j)$ be
  the isogeny corresponding to $a_e$. Then $\phi_e = \hat{\phi_1}u$
  for some $u\in \Aut(E(j))$. Thus
  $\alpha= \widehat{\phi_1} \alpha'\phi_1$ where $\alpha'$ is an
  endomorphism of $E(j')$. By
  Proposition~\ref{lemma:traceofconjugate},
  $\tr(\alpha)\equiv 0 \pmod{\ell}$.
\end{proof}

This lets us conclude the following. 

\begin{lemma}\label{lemma: ell-maximal}
  Let $\{a_1,\ldots,a_e\}$ be a cycle in $G(p,\ell)$ with no
  backtracking and such that $a_1$ is not dual to $a_e$. Suppose $a_1$
  is an edge originating from $E(j)$. In the case that the cycle is a
  self-loop $a_1$ at $E(j)$, we assume that $a_1$ is not dual to
  itself.  Let $\alpha\in \End(E(j))$ be the endomorphism
  corresponding to the cycle. Then the conductor of the quadratic
  order $\Z[\alpha]$ in $\QQ(\alpha)$ is coprime to $\ell$.
\end{lemma}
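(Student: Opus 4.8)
The strategy is to reduce the statement about the conductor of $\Z[\alpha]$ to the mod-$\ell$ behavior of the trace and norm of $\alpha$, which Lemma~\ref{lemma: nonzero trace} controls, and then combine this with the explicit description of when a quadratic order $\Z[\alpha]$ fails to be maximal at a prime above $\ell$. The key observation is that $\alpha$ satisfies its characteristic equation $\alpha^2 - t\alpha + n = 0$, where $t = \tr(\alpha)$ and $n = \deg(\alpha) = \ell^e$. Since $\alpha$ corresponds to a cycle with no backtracking, Lemma~\ref{lemma:nobacktracking} tells us $\alpha$ is primitive, and since $a_1$ is not dual to $a_e$, Lemma~\ref{lemma: nonzero trace} gives $t = \tr(\alpha) \not\equiv 0 \pmod \ell$. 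The discriminant of the characteristic polynomial is $\Delta = t^2 - 4\ell^e$, and the conductor $f$ of $\Z[\alpha]$ in the maximal order $\OO_{\QQ(\alpha)}$ satisfies $\Delta = f^2 \cdot d_K$ where $d_K$ is the fundamental discriminant. So it suffices to show $\ell \nmid f$, equivalently that $\ell^2 \nmid \Delta$ unless $\ell \mid d_K$ in a compatible way — more precisely, it suffices to show that $v_\ell(\Delta)$ is not large enough to force $\ell \mid f$.

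First I would handle the generic case $e \geq 1$ with $\ell$ odd: here $v_\ell(t^2 - 4\ell^e) = v_\ell(t^2) = 0$ since $\ell \nmid t$, so $\Delta$ is an $\ell$-adic unit, hence $\ell \nmid f$ trivially. For $\ell = 2$ one has to be slightly more careful because of the factor of $4$: $\Delta = t^2 - 4\cdot 2^e = t^2 - 2^{e+2}$, and since $t$ is odd, $t^2 \equiv 1 \pmod 8$, so $v_2(\Delta) = 0$ when $e = 0$ (not our case, as $e \geq 1$), and when $e \geq 1$ we get $\Delta \equiv 1 \pmod 4$ when $e+2 \geq 2$, actually $v_2(\Delta) = 0$ throughout since $t^2$ is odd and $2^{e+2}$ is even, so $\Delta$ is odd. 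Thus again $2 \nmid f$. Either way, the key point is that $\ell \nmid t$ forces $\ell \nmid \Delta$, since $\ell \mid \ell^e$ (using $e \geq 1$), so $\Z[\alpha]$ is already maximal at $\ell$, i.e., its conductor is coprime to $\ell$.

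The one subtlety, and what I expect to be the main thing to get right rather than the main obstacle, is the degenerate case where $e = 0$ would make $\ell^e = 1$ and the argument changes, but the hypotheses exclude degenerate cycles (a self-loop is allowed, with $e = 1$, and we assume it is not dual to itself, which is exactly the $a_1$-not-dual-to-$a_e$ condition specialized to $e=1$); so $e \geq 1$ always and $\ell \mid \ell^e$. A second point to verify cleanly is the relation between the conductor of $\Z[\alpha]$ and $v_\ell(t^2 - 4\ell^e)$: one should note $\Z[\alpha] \cong \Z[x]/(x^2 - tx + \ell^e)$ as the characteristic polynomial is the minimal polynomial precisely because $\alpha \notin \Z$ (which holds since $\alpha$ is a nonscalar endomorphism — its kernel is a nontrivial cyclic group of $\ell$-power order, so it is not multiplication by an integer), and the conductor of such an order is the largest $f$ with $(t^2 - 4\ell^e)/f^2$ a discriminant, whose $\ell$-valuation is $\lfloor v_\ell(t^2 - 4\ell^e)/2 \rfloor$ up to the usual $\ell = 2$ correction; since $v_\ell(t^2 - 4\ell^e) = 0$, this valuation is $0$. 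Assembling these observations gives the claim, and I would present it as: $\alpha$ primitive and $a_1$ not dual to $a_e$ imply $\ell \nmid \tr(\alpha)$ by Lemma~\ref{lemma: nonzero trace}; then $\disc(\Z[\alpha]) = \tr(\alpha)^2 - 4\ell^e$ is coprime to $\ell$; hence the conductor of $\Z[\alpha]$ is coprime to $\ell$.
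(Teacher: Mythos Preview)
Your proposal is correct and follows essentially the same approach as the paper: use Lemma~\ref{lemma: nonzero trace} to get $\ell\nmid\tr(\alpha)$, observe that the discriminant $\tr(\alpha)^2-4\ell^e$ is then coprime to $\ell$, and conclude that the conductor of $\Z[\alpha]$ (which divides the square part of this discriminant) is coprime to $\ell$. Your write-up is considerably more detailed than the paper's three-sentence proof, but the mathematical content is identical.
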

\begin{proof}
  As $\alpha$ is primitive, it determines a quadratic imaginary
  extension $\QQ(\alpha)$ of $\QQ$. The discriminant of $\alpha$ is
  $\tr(\alpha)^2-4\ell^e$, which is coprime to $\ell$ by
  Lemma~\ref{lemma: nonzero trace}. Thus the conductor of
  $\Z[\alpha]$, which divides the square part of the discriminant of
  $\alpha$, is also coprime to $\ell$.
\end{proof}

\begin{lemma}\label{lemma: ell splits}
Suppose that $\alpha \in \End(E(j))$ corresponds to a cycle
$\{a_1,\ldots,a_e\}$ in
$G(p,\ell)$ with no backtracking and such that $a_1$ is not dual to
$a_e$. Let $K=\QQ(\alpha)$. Then $\ell$ splits completely in $\OO_K$
as $\ell\OO_K = \pp_1\pp_2$, and $\alpha \Z[\alpha] = (\pp_i\cap \Z[\alpha])^e$ for
$i=1$ or $2$. 
\end{lemma}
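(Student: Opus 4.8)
Write $t=\tr(\alpha)$. Since $\alpha$ corresponds to a cycle with no backtracking whose first and last edges are not dual, it is primitive (Lemma~\ref{lemma:nobacktracking}), $\deg(\alpha)=\ell^e$, and $\ell\nmid t$ by Lemma~\ref{lemma: nonzero trace}; moreover $K=\Q(\alpha)$ is imaginary quadratic (as noted in the proof of Lemma~\ref{lemma: ell-maximal}), and $m(x)=x^2-tx+\ell^e$ is the minimal polynomial of $\alpha$. The plan is to read off both assertions from the factorization of $m(x)$ over $\Q_\ell$, and then to descend from $\OO_K$ to $\Z[\alpha]$ using that the conductor of $\Z[\alpha]$ is prime to $\ell$ (Lemma~\ref{lemma: ell-maximal}).

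First I would factor $m$ $\ell$-adically. Modulo $\ell$ one has $m(x)\equiv x(x-t)\pmod\ell$, and the two factors are coprime in $\F_\ell[x]$ because $t$ is a unit mod $\ell$. Hensel's lemma lifts this to $m(x)=(x-u)(x-v)$ in $\Z_\ell[x]$ with $u\equiv 0$ and $v\equiv t\pmod\ell$; then $v\in\Z_\ell^\times$, and $uv=\ell^e$ forces $v_\ell(u)=e$, $v_\ell(v)=0$. Hence $K\otimes_\Q\Q_\ell\cong\Q_\ell\times\Q_\ell$, so $\ell$ splits completely in $\OO_K$, say $\ell\OO_K=\pp_1\pp_2$ with $\pp_1\neq\pp_2$; choose the labelling so that the completion $K\to K_{\pp_1}\cong\Q_\ell$ sends $\alpha\mapsto u$ and $K\to K_{\pp_2}\cong\Q_\ell$ sends $\alpha\mapsto v$. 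Then $v_{\pp_1}(\alpha)=e$ and $v_{\pp_2}(\alpha)=0$, and since $\lvert N_{K/\Q}(\alpha)\rvert=\ell^e$ the ideal $\alpha\OO_K$ involves only primes above $\ell$; these valuations therefore determine it, $\alpha\OO_K=\pp_1^e$.

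Finally I would descend to $\Z[\alpha]$. By Lemma~\ref{lemma: ell-maximal} the orders $\Z[\alpha]$ and $\OO_K$ have the same localization at every prime above $\ell$; let $\mathfrak{q}_i=\pp_i\cap\Z[\alpha]$. Multiplication by $\alpha$ on $\Z[\alpha]$ has determinant $\pm N_{K/\Q}(\alpha)=\pm\ell^e$, so $[\Z[\alpha]:\alpha\Z[\alpha]]=\ell^e$ and hence $\ell^e\Z[\alpha]\subseteq\alpha\Z[\alpha]$, i.e.\ $\alpha\Z[\alpha]$ is supported only on primes of $\Z[\alpha]$ above $\ell$. Localizing at $\ell$ and using $\Z[\alpha]_{(\ell)}=(\OO_K)_{(\ell)}$ together with $\alpha\OO_K=\pp_1^e$ then gives $\alpha\Z[\alpha]=\mathfrak{q}_1^e=(\pp_1\cap\Z[\alpha])^e$, which is the claim with $i=1$.

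The one delicate point is this last descent: one needs $\alpha\Z[\alpha]$ to factor as a pure power of a single prime of the possibly non-maximal order $\Z[\alpha]$, and this is precisely what the $\ell$-maximality of $\Z[\alpha]$ supplies — away from $\ell$ the identity $\alpha\overline\alpha=\ell^e$ confines everything, and at $\ell$ the order is already Dedekind, so the factorization $\alpha\OO_K=\pp_1^e$ transports verbatim to $\Z[\alpha]$. Everything else is a formal consequence of the two inputs $\ell\nmid\tr(\alpha)$ and $\ell\nmid\mathrm{cond}(\Z[\alpha])$ established in the preceding lemmas.
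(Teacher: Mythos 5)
Your proof is correct, and it takes a genuinely different route from the paper's. The paper establishes that $\ell$ splits by a three-way case analysis on the ramification type of $\ell$ in $K$: if $\ell$ ramifies, then $\alpha^2\OO_K=\ell^e\OO_K$ forces $\alpha^2$ to factor through $[\ell^e]$; if $\ell$ is inert, then $\alpha\OO_K=\ell^{e/2}\OO_K$ forces $\alpha$ to factor through $[\ell]$; each of these contradicts primitivity (Lemma~\ref{lemma:nobacktracking}). It then shows $\alpha\OO_K=\pp_i^e$ by a further contradiction: if both $\pp_1$ and $\pp_2$ divided $\alpha\OO_K$, one could again extract a factor of $[\ell]$. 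Your argument replaces this entire case analysis with a single $\ell$-adic computation: since $\ell\nmid t$, the minimal polynomial $x^2-tx+\ell^e$ factors modulo $\ell$ into coprime linear factors, Hensel's lemma lifts this to $\Z_\ell$, and the valuations $v_\ell(u)=e$, $v_\ell(v)=0$ read off the split condition and the prime factorization $\alpha\OO_K=\pp_1^e$ at once. This is cleaner in that it makes the logical dependency transparent — the split condition and the one-sided factorization are direct consequences of $\ell\nmid\tr(\alpha)$ and $N(\alpha)=\ell^e$, with no further appeal to primitivity of powers of $\alpha$ — whereas the paper's argument keeps the isogeny-theoretic phrasing (factoring through multiplication maps) throughout. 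Your final descent from $\OO_K$ to $\Z[\alpha]$ via coprimality of the conductor to $\ell$ is also slightly more carefully articulated than the paper's implicit transfer; the standard bijection between ideals coprime to the conductor in the two orders is exactly what is needed, and you correctly observe that $\alpha\Z[\alpha]$ is coprime to the conductor because it is supported only above $\ell$.
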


\begin{proof}

Since $a_1$ is not dual to $a_e$, the conductor of $\Z[\alpha]$ is
coprime to $\ell$ by Lemmas~\ref{lemma: nonzero trace} and~\ref{lemma:
  ell-maximal}. Let $\pp$ be a prime of $\OO_K$ above $\ell$. If
$\ell$ ramifies in $K$, then the factorization $\alpha^2\OO_K =
\pp^{2e} = \ell^e\OO_K$ implies that $\alpha^2\Z[\alpha] =
\ell^e\Z[\alpha]$. But then $\alpha^2 = [\ell^e]\gamma$ for some
$\gamma\in \Z[\alpha]\subseteq \End(E(j))$. On the other hand,
$\alpha^2$ must be primitive because  the assumptions that $\alpha$ is primitive and $a_e$
is not dual to $a_1$ imply that the cycle for $\alpha^2$ has no
backtracking. This implies that $\ell$ cannot ramify in $K$. If $\ell$ is inert, it follows that
$e$ is even and 
$\alpha\Z[\alpha]=\ell^{e/2}\Z[\alpha]$, again contradicting the
assumption that $\alpha$ is primitive. 

We conclude that $\ell$ must
split completely in $K$, so let $\ell\OO_K = \pp_1\pp_2$ be the
factorization of $\ell\OO_K$. We now claim that the ideal
$\alpha\OO_K$ factors as
$\alpha\OO_K= \mathfrak{p}_1^{e}$ or $\alpha\OO_K= \mathfrak{p}_2^{e}$. 

 If the claim does not hold, then $\alpha\OO_K =
\mathfrak{p}_1^r\mathfrak{p}_2^s$ with $r,s >0$. Without loss of
generality we may assume that $r >s$. Then $\alpha\OO_K =
(\mathfrak{p}_1\mathfrak{p}_2)^s(\mathfrak{p}_1)^{r-s}=
(\ell)^s(\mathfrak{p}_1)^{r-s}$. Then in $\Z[\alpha]$, we have the
factorization
\[
\alpha\Z[\alpha] = (\ell)^s(\mathfrak{p}_1\cap\Z[\alpha])^{r-s}.
\]
This implies that $\alpha = [\ell] \gamma$ for some
$\gamma\in \End(E(j))$, but by Lemma~\ref{lemma:nobacktracking}, this
contradicts the assumption that $\alpha$ has no backtracking. \end{proof}


\begin{theorem}\label{thm:independent endos}
  Suppose that two cycles with no backtracking 
 pass through
  $E(j)$ and that at least one cycle satisfies the hypotheses of
  Lemma~\ref{lemma: ell splits}. Denote the corresponding endomorphisms of $E(j)$ by
  $\alpha,\beta$. Suppose further that $\alpha$ and $\beta$
  commute. Then there is a third cycle with no backtracking passing
  through $E(j)$ which corresponds to an endomorphism
  $\gamma\in \End(E(j))$ and two automorphisms $u,v\in \Aut(E(j))$
  which commute with $\gamma$ such that $\alpha=u\gamma^a$ and either
  $\beta=v\gamma^b$ or $\beta=v\widehat{\gamma^b}$. In particular, the
  cycle for $\alpha$ is just the cycle for $\gamma$ repeated $a$
  times, and the cycle for $\beta$ is the cycle for $\gamma$ or
  $\widehat{\gamma}$ repeated $b$ times.
\end{theorem}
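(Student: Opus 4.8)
The plan is to reduce the statement to a computation in a single imaginary quadratic order. Without loss of generality assume the cycle for $\alpha$ satisfies the hypotheses of Lemma~\ref{lemma: ell splits}. Since $\alpha$ is primitive of degree $\ell^e$ with $e\geq 1$, it is not a rational multiple of the identity, so $K:=\QQ(\alpha)$ is an imaginary quadratic field (as in the proof of Lemma~\ref{lemma: ell-maximal}), and $K$ is a maximal commutative subalgebra of $B_{p,\infty}=\End(E(j))\otimes\QQ$. By the double centralizer theorem the centralizer of $K$ in $B_{p,\infty}$ is $K$ itself, so the hypothesis that $\beta$ commutes with $\alpha$ forces $\beta\in K$. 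Hence both $\alpha$ and $\beta$ lie in the order $\OO:=\End(E(j))\cap K$ of $K$. Because $\Z[\alpha]\subseteq\OO\subseteq\OO_K$ and, by Lemma~\ref{lemma: ell-maximal}, the conductor of $\Z[\alpha]$ is coprime to $\ell$, the index $[\OO_K:\OO]$ divides $[\OO_K:\Z[\alpha]]$ and so is coprime to $\ell$; thus $\OO$ is maximal at $\ell$. Writing $\ell\OO_K=\pp_1\pp_2$ as in Lemma~\ref{lemma: ell splits}, the ideals $\mathfrak{q}_i:=\pp_i\cap\OO$ are therefore invertible, satisfy $\mathfrak{q}_1\mathfrak{q}_2=\ell\OO$ and $\overline{\mathfrak{q}_1}=\mathfrak{q}_2$, and invertible $\OO$-ideals coprime to the conductor factor uniquely into $\mathfrak{q}_1,\mathfrak{q}_2$ and primes away from $\ell$.

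Next I would record the relevant ideal factorizations. Extending $\alpha\Z[\alpha]=(\pp_1\cap\Z[\alpha])^e$ (Lemma~\ref{lemma: ell splits}, after relabelling) from $\Z[\alpha]$ to $\OO$ gives $\alpha\OO=\mathfrak{q}_1^{\,e}$. Let $f$ be the length of the cycle for $\beta$, so $\Nrd(\beta)=\ell^f$ and $\beta\OO\cdot\overline{\beta\OO}=\ell^f\OO=(\mathfrak{q}_1\mathfrak{q}_2)^f$, which forces $\beta\OO=\mathfrak{q}_1^{\,r}\mathfrak{q}_2^{\,s}$ with $r+s=f$. If $r$ and $s$ were both positive then $\beta\OO\subseteq\mathfrak{q}_1\mathfrak{q}_2=\ell\OO$, so $\beta/\ell\in\OO\subseteq\End(E(j))$ and $\beta$ factors through $[\ell]$, contradicting Lemma~\ref{lemma:nobacktracking} since the cycle for $\beta$ has no backtracking. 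Hence $r=0$ or $s=0$, and after replacing $\beta$ by $\widehat\beta$ if necessary (still primitive of degree $\ell^f$, hence with a cycle with no backtracking by Lemma~\ref{lemma:nobacktracking}) we may assume $\beta\OO=\mathfrak{q}_1^{\,f}$.

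Now set $d:=\gcd(e,f)$. Since $\mathfrak{q}_1^{\,e}$ and $\mathfrak{q}_1^{\,f}$ are both principal, the order of $[\mathfrak{q}_1]$ in $\Pic(\OO)$ divides $e$ and $f$, hence divides $d$, so $\mathfrak{q}_1^{\,d}$ is principal: write $\mathfrak{q}_1^{\,d}=\gamma\OO$ with $\gamma\in\OO\subseteq\End(E(j))$. Then $\deg\gamma=\Nrd(\gamma)=\ell^d$, and $\gamma$ is primitive, for if $\gamma=[\ell]\delta$ then $\gamma\OO\subseteq\ell\OO=\mathfrak{q}_1\mathfrak{q}_2$, contradicting $\mathfrak{q}_2\nmid\mathfrak{q}_1^{\,d}$; so by Lemma~\ref{lemma:nobacktracking} the cycle through $E(j)$ corresponding to $\gamma$ has no backtracking. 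From $\alpha\OO=\mathfrak{q}_1^{\,e}=\gamma^{e/d}\OO$ and $\beta\OO=\mathfrak{q}_1^{\,f}=\gamma^{f/d}\OO$ we obtain $\alpha=u\gamma^{a}$ and $\beta=v\gamma^{b}$ with $a=e/d$, $b=f/d$ and $u,v\in\OO^\times$. The units of $\OO$ are roots of unity of $K$, hence invertible endomorphisms of $E(j)$, i.e.\ elements of $\Aut(E(j))$, and they commute with $\gamma$ since $\OO$ is commutative. Undoing the possible replacement of $\beta$ by $\widehat\beta$ turns $\beta=v\gamma^{b}$ into $\beta=\widehat v\,\widehat{\gamma^{b}}$ with $\widehat v\in\Aut(E(j))$, which gives the two cases. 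Finally, as $u$ (resp.\ $v$) is an automorphism, $u\gamma^{a}$ has the same kernel as $\gamma^{a}$ and hence determines the same path in $G(p,\ell)$, namely the cycle for $\gamma$ traversed $a$ times; similarly the cycle for $\beta$ is that of $\gamma$ or $\widehat\gamma$ traversed $b$ times.

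I expect the main obstacle to be the bookkeeping with the non-maximal order $\OO$: one must verify that $\OO$ is maximal at $\ell$, that $\mathfrak{q}_1,\mathfrak{q}_2$ are invertible with the stated multiplicative behaviour (so that the descent of Lemma~\ref{lemma: ell splits} to $\OO$ and all the ideal identities above are legitimate), and that the order of $[\mathfrak{q}_1]$ in $\Pic(\OO)$ divides $\gcd(e,f)$ rather than merely $\operatorname{lcm}(e,f)$. Once $\beta$ has been placed in $K$ by the centralizer argument, the remaining steps are essentially formal manipulations of invertible ideals and their generators.
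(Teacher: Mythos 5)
Your proposal is correct and mirrors the paper's proof: place $\beta$ in $K=\QQ(\alpha)$ using commutativity, work in an order of $K$ inside $\End(E(j))$ whose conductor is coprime to $\ell$, show that $\alpha$ and (after replacing $\beta$ by $\widehat\beta$ if necessary) $\beta$ generate powers of the same prime above $\ell$, and extract $\gamma$ from the gcd of the exponents. The only cosmetic differences are your choice $\OO=\End(E(j))\cap K$ where the paper uses $\Z[\alpha]+\Z[\beta]$, and your class-group phrasing of the gcd step where the paper constructs $\gamma=\alpha^m\beta^n$ via a B\'ezout identity $d=im+jn$; your write-up is also somewhat more explicit about why $\gamma$ is primitive and its cycle has no backtracking.
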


\begin{proof}
  Assume that the cycle for $\alpha$ satisfies the assumption that its
  first edge is not dual to its last edge. Then the conductor of
  $\Z[\alpha]$ is coprime to $\ell$.  Since $\alpha$ and $\beta$
  commute, we must have $\beta\in \QQ(\alpha)$. Let $\OO$ be the order
  of $\QQ(\alpha)$ whose conductor is the greatest common divisor of
  the conductors of $\Z[\alpha]$ and $\Z[\beta]$. Then
  $\OO= \Z[\alpha]+\Z[\beta]\subseteq
  \End(E(j))$ and
 the conductor of $\OO$ is coprime to
  $\ell$. By Lemma~\ref{lemma: ell splits}, $\ell$ splits
  completely in $K$; let $\pp_1,\pp_2$ be the primes above
  $\ell$. Then without loss of generality, we have the factorization
  $\alpha \OO = (\pp_1 \cap \OO)^i$ of $\alpha\OO$ into primes of
  $\OO$ by the same argument as in Lemma~\ref{lemma: ell splits}.
  


Observe that since $\beta\in \OO$, 
$\widehat{\beta} = \tr(\beta) - \beta \in \OO$. After possibly exchanging $\beta$ with its dual
$\hat\beta$, we get with the same argument that $\beta\OO =
(\mathfrak{p}_1\cap \OO)^j$. 
 Now let $d=\gcd(i,j)$, which implies that there exist $m,n\in \Z $ such
  that $d=im+jn$ and hence
\[
(\mathfrak{p_1}\cap \OO)^d = (\mathfrak{p_1}\cap\OO)^{im+jn}=\alpha^m\beta^n\OO. 
\]
Set $\gamma=\alpha^m\beta^n\in K$. Then 
\[
\gamma\OO = (\mathfrak{p}_1\cap \OO)^d
\]
implies $\gamma\in \OO$ and that $\gamma$ must be primitive.
Filtering the kernel of $\gamma $ yields a cycle. Write $i=da$ and
$j=db$ for $a,b\in \N $. Then $\gamma^a\OO=\alpha\OO$, so there exists
$u\in\OO^*\subseteq \Aut(E(j))$ such that $\alpha = u\gamma^a$. We see
that the cycle for $\alpha $ is just the cycle for $\gamma $ repeated
$a$ times. Similarly, the cycle for $\beta $ is the cycle for
$\gamma $ repeated $b$ times.
\end{proof} 

We can state a more general result about when two cycles can give rise to
commuting endomorphisms. 

\begin{corollary}
  Suppose that $P=\{a_1,\ldots,a_m\}$ is a path in $G(p,\ell)$ without
  backtracking between $E(j)$ and $E(j')$ which does not pass through
  $E(0)$ or $E(1728)$. Suppose that
  $C = \{a_{m+1},\ldots,a_{m+e}\}$ is a cycle beginning at $E(j')$ satisfying the
  assumptions of Lemma~\ref{lemma: ell splits}. Let $\widehat{P}$ be a
  path
  $\{a_{m+e+1},\ldots,a_{2m+e}\}$ without backtracking such that $a_{k}$ is dual to
  $a_{m+e+k}$ for $1\leq k \leq m$. Let $\alpha\in \End(E(j))$ be the
  endomorphism corresponding to the cycle $\{a_1,\ldots,a_{2m+e}\}$,
  the concatenation of $P$, $C$, and $\widehat{P}$.  Now let $\beta$ be the endomorphism for another cycle in $G(p,\ell)$ without
  backtracking which starts at $E(j)$, and assume $\alpha$ and $\beta$ commute. Then
    there exist automorphisms $u_1,u_2\in
    \Aut(E(j))$, an $\ell$-power isogeny $\phi: E(j)\to
    E(j')$, an endomorphism $\gamma\in
    \End(E(j'))$, automorphisms $v_1,v_2\in \Aut(E(j'))$ which commute
    with $\gamma$, and positive integers $a,b$ such that $ \alpha = u_1
    \circ \widehat{\phi} \circ v_1\gamma^a\circ \phi$ and $\beta = u_2
    \circ \widehat{\phi}\circ v_2\gamma^b\circ \phi$ or $\beta =u_2 \circ
    \widehat{\phi}\circ v_2\widehat{\gamma}^b \circ \phi$.
\end{corollary}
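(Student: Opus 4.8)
The plan is to ``un-conjugate'' $\alpha$ and $\beta$ through a common $\ell$-power isogeny to $E(j')$ and then quote Theorem~\ref{thm:independent endos}. Write $\phi_{0}\colon E(j)\to E(j')$ for the degree-$\ell^{m}$ isogeny obtained by composing the chosen edge-isogenies along $P$, and let $\rho\in\End(E(j'))$ be the endomorphism attached to $C$. Because each edge of $\widehat P$ is dual to the corresponding edge of $P$, the isogeny along $\widehat P$ is $u^{-1}\widehat{\phi_{0}}$ for some $u\in\Aut(E(j))$, and since $P$ avoids $E(0)$ and $E(1728)$ we have $\Aut(E(j))=\{\pm1\}$; composing the three legs of the cycle gives the honest identity
\[
\alpha\;=\;u\,\widehat{\phi_{0}}\,\rho\,\phi_{0},\qquad \deg\alpha=\ell^{2m+e}.
\]
By Lemma~\ref{lemma:nobacktracking} $\rho$ is primitive, and as $C$ satisfies the hypotheses of Lemma~\ref{lemma: ell splits}, the prime $\ell$ splits in $K:=\Q(\rho)=\Q(\alpha)$, and (Lemmas~\ref{lemma: nonzero trace} and~\ref{lemma: ell-maximal}) the quadratic order $\End(E(j'))\cap K$ has conductor prime to $\ell$. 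Conjugation by $\phi_{0}$, namely $\delta\mapsto\ell^{-m}\widehat{\phi_{0}}\,\delta\,\phi_{0}$, is an isomorphism of quaternion algebras $\End(E(j'))\otimes\Q\xrightarrow{\ \sim\ }\End(E(j))\otimes\Q$ (Lemma~\ref{lemma:traceofconjugate}, cf.\ \cite{waterhouse}) sending $\rho$ to $\pm\alpha/\ell^{m}$, hence $K$ onto $\Q(\alpha)$; in particular it preserves reduced norms and reduced traces.

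Since $\alpha$ and $\beta$ commute and $\alpha\notin\Z$, we get $\beta\in\Q(\alpha)$, so its preimage under this conjugation lies in $K$, commutes with $\rho$, has reduced norm $\deg\beta$ (a power of $\ell$), and has reduced trace $\tr\beta\in\Z$. The crux is to descend $\beta$ all the way to $E(j')$: I would show that $\alpha$ and $\beta$ both factor as $\alpha=u_{1}\widehat\phi\,\rho'\,\phi$ and $\beta=u_{2}\widehat\phi\,\sigma'\,\phi$ for a single $\ell$-power isogeny $\phi\colon E(j)\to E(j')$, automorphisms $u_{1},u_{2}\in\Aut(E(j))=\{\pm1\}$, and endomorphisms $\rho',\sigma'\in\End(E(j'))$ with $\rho'$ primitive and still satisfying the hypotheses of Lemma~\ref{lemma: ell splits} and with $\sigma'$ primitive. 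The input is purely local: away from $\ell$ the isogenies $\phi_{0},\widehat{\phi_{0}}$ are isomorphisms on Tate modules, so the conjugation is $q$-integral there; at $\ell$, writing the relevant elements in the two components of $K\otimes\Q_{\ell}\cong\Q_{\ell}\times\Q_{\ell}$, a component of negative valuation would force a reduced trace of negative $\ell$-valuation, which is impossible, so by the $\ell$-maximality of $\End(E(j'))\cap K$ everything stays integral; finally the no-backtracking of the concatenated cycle for $\alpha$ together with the fact that $\tr\rho\not\equiv0\pmod\ell$ control how $\ker\phi_{0}$ and $\ker\widehat{\phi_{0}}$ reduce against the eigenlines of $\rho$ modulo $\ell$, and this is what selects the isogeny $\phi$ (a compression of $\phi_{0}$) through which $\beta$ also factors. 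I expect this $\ell$-adic bookkeeping to be the main obstacle.

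Granting these factorizations, $\rho'$ and $\sigma'$ are commuting endomorphisms of $E(j')$: $\rho'$ comes from a cycle without backtracking satisfying Lemma~\ref{lemma: ell splits}, and $\sigma'$, being primitive, comes from a cycle without backtracking (Lemma~\ref{lemma:nobacktracking}). Applying Theorem~\ref{thm:independent endos} on $E(j')$, with $\rho'$ playing the role of its ``$\alpha$'', produces a cycle without backtracking through $E(j')$ with endomorphism $\gamma$, automorphisms $v_{1},v_{2}\in\Aut(E(j'))$ commuting with $\gamma$, and positive integers $a,b$ such that $\rho'=v_{1}\gamma^{a}$ and either $\sigma'=v_{2}\gamma^{b}$ or $\sigma'=v_{2}\widehat{\gamma}^{\,b}$. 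Substituting into $\alpha=u_{1}\widehat\phi\,\rho'\,\phi$ and $\beta=u_{2}\widehat\phi\,\sigma'\,\phi$ yields $\alpha=u_{1}\widehat\phi\,v_{1}\gamma^{a}\,\phi$ and $\beta=u_{2}\widehat\phi\,v_{2}\gamma^{b}\,\phi$ (or with $\widehat{\gamma}^{\,b}$), as claimed.
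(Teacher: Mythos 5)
The setup is right: writing $\alpha=u\,\widehat{\phi_{0}}\,\rho\,\phi_{0}$ with $\phi_0$ the degree-$\ell^m$ isogeny along $P$ and $\rho$ the endomorphism of $C$, observing $\beta\in\Q(\alpha)$, and planning to apply Theorem~\ref{thm:independent endos} on $E(j')$ once $\alpha$ and $\beta$ are ``un-conjugated.'' But the step that you yourself flag as ``the main obstacle'' --- factoring $\beta$ as $u_2\,\widehat{\phi}\,\sigma'\,\phi$ for a common $\phi$ --- is exactly the content of the corollary, and the sketch you offer does not establish it. The asserted conclusion is an \emph{integral} composition of isogenies, i.e.\ $\ker\phi\subseteq\ker\beta$, which says precisely that the cycle for $\beta$ literally begins with the edges of $P$ (or a truncation). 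Your $\ell$-adic argument, however, is aimed at showing that $\tfrac{1}{\ell^{m}}\phi_{0}\beta\widehat{\phi_{0}}$ lands in $\End(E(j'))$ (integrality of a conjugate), which is a much weaker statement: integrality of $\tfrac{1}{\ell^{m}}\phi_{0}\beta\widehat{\phi_{0}}$ only says $\beta$ \emph{preserves} $\ker\phi_{0}$, not that it kills it, and does not by itself produce the required factorization $\beta=u_2\widehat{\phi}\sigma'\phi$ or tell you which ``compression'' $\phi$ to take. The phrases about valuations and eigenlines do not resolve this; they gesture at a local computation without isolating the combinatorial fact that actually needs proving.

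The paper proves that fact combinatorially, by induction on $m$. Writing $\beta$'s cycle as $\{a_1',\dots,a_{e'}'\}$, they extract the largest $n$ with $a_k'$ dual to $a_{e'-k+1}'$ for $k\le n$, reduce (WLOG) to $m\le n$, and then show by contradiction --- using that $\beta\alpha$ cannot be primitive and that $P$ avoids $E(0),E(1728)$ so $\Aut=\{\pm1\}$ --- that $a_1=a_1'$. This lets them peel off the outermost $\ell$-isogeny from both cycles and induct, eventually landing at $E(j')$ where Theorem~\ref{thm:independent endos} applies with $\phi=\phi_0$ (no compression is needed). They must also handle a degenerate case you do not address, where $\beta$'s inner cycle is a single self-loop dual to itself; in that case they derive a contradiction with commutativity rather than a factorization. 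So the route is genuinely different (lattice/$\ell$-adic versus edge-by-edge combinatorics), but as written yours has a gap at the decisive step and does not engage with the degenerate configurations that the paper is careful about.
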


\begin{proof}
 Let $\alpha' \in \End(E(j'))$ be the cycle corresponding to $C$. Let the cycle for $\beta$ be $\{a_1',\ldots,a_{e'}'\}$. We can assume there is a positive integer $n$ such that $a_k'$ is dual to $a_{e'-k+1}'$ for $1 \leq k \leq n$, but $a_{n+1}'$ is not dual to $a_{e'-n}'$. We can assume such an index exists because if not, $a_1'$ is not dual to $a_{e'}'$, and we could then apply the previous theorem to $\beta$. Write $f=e'-2n$. Then we must have $f\geq 1$, because otherwise $\beta$ will not be primitive. We then have two cases to consider: the cycle $\{a_{n+1}',\ldots,a_{n+f}'\}$ satisfies the assumptions of Lemma~\ref{lemma: ell-maximal}, or $f=1$ and $a_{n+1}'$ is a self-loop which is dual to itself. We begin by considering the first case. We can assume that $m\leq n$, because otherwise we could swap the roles of $\alpha$ and $\beta$. 
 
 We will proceed by induction on $m$. Assume first that
 $m=1$. 
Let $\beta'$ correspond to
$\{a_2',\ldots,a_{2n+f-1}'\}$. 
If the cycle $\{a_1,\ldots,a_{e+2},a_1',\ldots,a_{2n+f}'\}$ has
backtracking, it follows that $a_1'$ is dual to $a_{e+2}$. As the
path $P=\{a_1\}$ does not pass through $E(0)$ or $E(1728)$, it follows that
$\phi_{e+2} = \widehat{\psi_1}$ or $\phi_{e+2} = \widehat{\psi_1}\circ
[-1]$. Additionally, since $a_1$ is dual to $a_{e+2}$, $\phi_{e+2} = \widehat{\phi_1}$ or
$\phi_{e+2}=\widehat{\phi_1}\circ [-1]$. In any case, the equality
$\alpha\beta = \beta\alpha$ implies that
$\alpha'\beta'=\beta'\alpha'$. We can now apply
Theorem~\ref{thm:independent endos}. If $m>1$, the corollary follows by applying
the same argument to $\alpha'$ and $\beta'$. 

We will now show that $\beta\alpha$ can not be primitive. Assume that
$\beta\alpha$ is primitive. Then its kernel is cyclic and thus
contains a unique subgroup $H$ of order $\ell$ with  $H \subset E(j)[\ell]$. Then
$H=\ker(\phi_1)$. On the other hand, the equality
$\alpha\beta = \beta\alpha$ implies that $H=\ker(\psi_1)$, so we
conclude $\phi_1 = \psi_1$ (here we use that $\phi_1$ and $\psi_1$ are
fixed representatives of edges in $G(p,\ell)$). This contradicts the assumption that
$\beta\alpha$ is primitive, since $a_{1}$ is dual to $a_e$ and
$a_1' = a_1$. 

Now we consider the case that $f=1$; we will show that in this case, $\beta$ does not commute with $\alpha$. Consider first the case that the cycle for $\beta$ is just $\{a_1'\}$, a single self-loop which is dual to itself. Then $a_1=a_1'$ by the same argument as above, by considering whether $\alpha\beta$ is primitive or not. 
Then the path $\{a_2,\ldots,a_{2m+e-1}\}$ is also a cycle beginning at $E(j)$, and its corresponding endomorphism also commutes with $\beta$. By induction we conclude then that $\beta$ also commutes with $\{a_{m+1},\ldots,a_{m+e}\}$, which is impossible by Theorem~\ref{thm:independent endos}. 

If now, in the cycle for $\beta$, we have $n<m$, we can use induction to conclude that $a_k=a_k'$ for $1\leq k\leq n$, and then reduce to the case that $\beta$ is a single self-loop dual to itself. 

Thus we conclude that $m\leq n$. Again by using $\alpha\beta=\beta\alpha$, we find that  $a_k=a_k'$ for $1\leq k \leq m$, and we can reduce to the case of Theorem~\ref{thm:independent endos}. 
\end{proof}

\begin{corollary}
  Suppose that two cycles $C_1$ and $C_2$ through $E(j)$ have no
  backtracking and that $C_1$ passes through a vertex through which
  $C_2$ does not pass. Suppose also that one cycle does not contain a
  self-loop which is dual to itself. Further assume that neither cycle
  passes through $E(0)$ or $E(1728)$. Then the corresponding
  endomorphisms in $\End(E(j))$ are linearly independent.
\end{corollary}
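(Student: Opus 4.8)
The plan is to argue by contradiction, reducing to the previous corollary. Write $\alpha,\beta\in\End(E(j))$ for the endomorphisms attached to $C_1,C_2$, and suppose they are linearly dependent. Since $C_1$ and $C_2$ have no backtracking, $\alpha$ and $\beta$ are primitive by Lemma~\ref{lemma:nobacktracking}, and in particular nonzero, so a linear relation makes $\beta$ a rational multiple of $\alpha$; hence $\alpha$ and $\beta$ commute. By hypothesis one of $C_1,C_2$ contains no self-loop that is dual to itself: call it $\mathcal{C}$, call the other $\mathcal{C}'$, and relabel so that $\alpha$ corresponds to $\mathcal{C}$ and $\beta$ to $\mathcal{C}'$.

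First I would put $\mathcal{C}$ into the shape demanded by the previous corollary, namely a concatenation $P\cdot C\cdot\widehat{P}$ in which $P$ is a no-backtracking path out of $E(j)$, $\widehat{P}$ is the reversed dual path of $P$ (so it has the same vertex set as $P$), and $C$ is a no-backtracking cycle whose first edge is not dual to its last edge --- equivalently, $C$ satisfies the hypotheses of Lemma~\ref{lemma: ell splits}. To produce this, take $P=\{a_1,\dots,a_m\}$ with $m\ge 0$ maximal such that, for every $k\le m$, the $k$-th edge of $\mathcal{C}$ from the start is dual to the $k$-th edge of $\mathcal{C}$ from the end; the leftover middle stretch $C$ is the desired cycle. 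It is nonempty, since if it were empty the two innermost stripped edges would be consecutive and dual, contradicting that $\mathcal{C}$ has no backtracking; its first edge is not dual to its last by maximality of $m$; and if $C$ has length one it is a self-loop (forced, since $\mathcal{C}$ is a cycle at $E(j)$) which is not dual to itself, because $\mathcal{C}$ contains no such loop. Finally $P$, being a subpath of $\mathcal{C}$, avoids $E(0)$ and $E(1728)$, as required.

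Next I would apply the previous corollary to $\alpha$, presented via this decomposition, and to $\beta$, using that $\alpha\beta=\beta\alpha$ and that $\beta$ arises from a no-backtracking cycle at $E(j)$. Unwinding its proof, there is an isogeny $\phi\colon E(j)\to E(j')$ (the isogeny of $P$) and a primitive $\gamma\in\End(E(j'))$ so that $\mathcal{C}$ is the concatenation of $P$, the cycle of $\gamma$ repeated $a$ times, and $\widehat{P}$, while $\mathcal{C}'$ is the concatenation of $P$, the cycle of $\gamma$ or of $\widehat{\gamma}$ repeated $b$ times, and $\widehat{P}$. Since repeating a cycle and reversing a cycle (the cycle of $\widehat{\gamma}$ is that of $\gamma$ run backwards) leave the vertex set unchanged, both $\mathcal{C}$ and $\mathcal{C}'$ pass through exactly the vertices of $P$ together with the vertices of the cycle of $\gamma$. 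Hence $C_1$ and $C_2$ have the same vertex set, contradicting the hypothesis that $C_1$ passes through a vertex not lying on $C_2$; therefore $\alpha$ and $\beta$ must be linearly independent.

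The step I expect to be the real work is the second paragraph: verifying that stripping dual edge-pairs off the two ends of an arbitrary no-backtracking cycle lands exactly in the configuration governed by the previous corollary --- in particular, that the hypothesis excluding a self-loop dual to itself is precisely what rules out the degenerate leftover --- and keeping the short case analysis (inner cycle of length $\ge 2$ versus a single self-loop) straight. Once the decomposition is secured, the remainder is a direct appeal to the previous corollary together with the observation that vertex sets are insensitive to repeating and reversing cycles.
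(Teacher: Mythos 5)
Your argument is correct, and it is exactly the derivation the paper intends but leaves unstated: the corollary appears without proof, and the route via the preceding corollary --- pass contrapositively to commutativity, strip dual edge-pairs from both ends of the cycle without a self-loop dual to itself so as to land in the $P\cdot C\cdot\widehat{P}$ configuration with $C$ satisfying Lemma~\ref{lemma: ell splits}, then read off from the structural conclusion that both cycles traverse exactly the vertices of $P$ and of the cycle of $\gamma$ --- is the intended one. Your handling of the edge cases (the inner cycle $C$ cannot be empty since the no-backtracking hypothesis bars the innermost pair from being dual, and a length-one $C$ is a self-loop not dual to itself by the standing hypothesis) is precisely the bookkeeping the paper skips. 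One small phrasing point: the paper's notion of linear dependence for $\alpha,\beta$ is dependence of $\{1,\alpha,\beta\}$ over $\Q$, which gives $\beta\in\Q(\alpha)$ rather than $\beta\in\Q\alpha$ as you wrote; but either way $\alpha$ and $\beta$ commute, which is the only fact you use, so this does not affect the argument.
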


\section{An obstruction to generating the full endomorphism ring}\label{sec: obstructions}

If $C$ is a cycle in $G(p,\ell)$ which passes through $E(j_1)$ and
$E(j_2)$, then we can view it as starting at $E(j_1)$ or $E(j_2)$ and
thus it corresponds to an endomorphism $\alpha\in \End(E(j_1))$ or
$\alpha'\in \End(E(j_2))$. This suggests the following: suppose we have two
cycles which have a path between $E(j_1)$ and $E(j_2)$ in common. Then we can view them as
endomorphisms of each vertex. These endomorphisms generate an order
$\mathcal{O}$ 
contained in
the intersection of $\End(E(j_1))$ and  
$\frac{1}{\ell^e}\hat{\phi}\End(E(j_2))\phi$ where $\phi$ corresponds
to the common path. These are two maximal orders inside of 
$\End(E(j_1))\otimes \QQ$ and thus the two cycles can not generate a maximal
order. However, this does not hold if
$\End(E(j_1))\simeq \End(E(j_2))$, i.e., $j_1$ is a Galois conjugate of $j_2$.
This is formalized in the
following theorem.

\begin{theorem}\label{thm:nonmaximal order}
	Suppose two cycles in $G(p,\ell)$ both contain the same path between two
        vertices $E(j_1)$ and $E(j_2)$. Let $\alpha$ and
        $\beta$ be the corresponding
        endomorphisms of $E(j_1)$. If the path between $E(j_1)$ and
        $E(j_2)$ passes through additional vertices, or if $j_1^{p}\neq j_2$,
        then $\{1,\alpha,\beta,\alpha\beta\}$ is not a basis
	for $\End(E(j_1))$.
\end{theorem}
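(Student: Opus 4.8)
The plan is to realize $\mathcal{O}\colonequals\langle 1,\alpha,\beta,\alpha\beta\rangle$ as a suborder of $\End(E(j_1))\cap\mathcal{O}'$, where $\mathcal{O}'$ is a maximal order of $B_{p,\infty}$ attached to $E(j_2)$, and then to show that under the hypotheses this intersection is proper in $\End(E(j_1))$ --- either because $\mathcal{O}'\neq\End(E(j_1))$, or because $\alpha$ and $\beta$ both factor through $[\ell]$. For the setup, let $\phi\colon E(j_1)\to E(j_2)$ be the degree-$\ell^{e}$ isogeny obtained by composing the edges of the common path. Reading each cycle from its occurrence of $E(j_1)$ at the start of the common path, we have $\alpha=\psi\circ\phi$ and $\beta=\psi'\circ\phi$ with $\psi,\psi'\colon E(j_2)\to E(j_1)$, and reading the same cycles from $E(j_2)$ gives $\alpha'=\phi\circ\psi$ and $\beta'=\phi\circ\psi'$ in $\End(E(j_2))$. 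Since $\widehat\phi\circ\phi=[\ell^{e}]$,
\[
\alpha=\tfrac{1}{\ell^{e}}\,\widehat\phi\circ\alpha'\circ\phi,\qquad
\beta=\tfrac{1}{\ell^{e}}\,\widehat\phi\circ\beta'\circ\phi,\qquad
\alpha\beta=\tfrac{1}{\ell^{e}}\,\widehat\phi\circ(\alpha'\beta')\circ\phi,
\]
and also $1=\tfrac{1}{\ell^{e}}\,\widehat\phi\circ\phi$. By Lemma~\ref{lemma:traceofconjugate} the map $\rho\mapsto\tfrac{1}{\ell^{e}}\widehat\phi\,\rho\,\phi$ is an isomorphism of quaternion algebras $\End(E(j_2))\otimes\Q\isomto\End(E(j_1))\otimes\Q=B_{p,\infty}$, so its image $\mathcal{O}'\colonequals\tfrac{1}{\ell^{e}}\widehat\phi\,\End(E(j_2))\,\phi$ is again a maximal order, and the identities above show $\mathcal{O}\subseteq\mathcal{O}'\cap\End(E(j_1))$.

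If $\mathcal{O}'\neq\End(E(j_1))$, then $\mathcal{O}'$ and $\End(E(j_1))$ are distinct maximal orders of $B_{p,\infty}$, neither properly contained in the other, so $\mathcal{O}'\cap\End(E(j_1))$ is a proper suborder of $\End(E(j_1))$; hence $\mathcal{O}\subsetneq\End(E(j_1))$ and $\{1,\alpha,\beta,\alpha\beta\}$ is not a basis. This certainly holds when $j_1^{p}\neq j_2$: then $E(j_1)$ and $E(j_2)$ are not Galois conjugate, so by the Deuring correspondence (recalled just before the theorem) $\End(E(j_1))\not\cong\End(E(j_2))\cong\mathcal{O}'$; it also holds whenever the common path has no backtracking, and more generally whenever $\phi$ is not an isomorphism composed with a power of $[\ell]$ (in particular whenever $j_1\neq j_2$).

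It remains to treat the case $\mathcal{O}'=\End(E(j_1))$. Identify $\mathcal{O}'$ with the right order of the kernel ideal $I_\phi=\Hom(E(j_2),E(j_1))\circ\phi$, a left $\End(E(j_1))$-ideal with $\Nrd(I_\phi)=\deg\phi=\ell^{e}$. The equality $\mathcal{O}'=\End(E(j_1))$ says that $I_\phi$ is two-sided; since $B_{p,\infty}$ is ramified only at $p$ and $\infty$, the integral two-sided ideals of a maximal order $\OO_0$ are exactly the $\mathfrak P^{a}n\OO_0$ with $a\in\{0,1\}$, $n\ge1$ and $\mathfrak P^{2}=p\OO_0$, so comparing reduced norms and using $\ell\neq p$ forces $a=0$, $e$ even, and $I_\phi=\ell^{e/2}\End(E(j_1))$, i.e.\ $\ker\phi=E(j_1)[\ell^{e/2}]$. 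Thus $\phi$ is $[\ell^{e/2}]$ composed with an isomorphism $E(j_1)\isomto E(j_2)$, so $j_1=j_2$; since the common path is nontrivial and $e$ is even, $e\ge2$, so $\phi$ --- and therefore $\alpha=\psi\circ\phi$ and $\beta=\psi'\circ\phi$ --- factors through $[\ell]$. Writing $\alpha=\ell\widetilde\alpha$ and $\beta=\ell\widetilde\beta$ with $\widetilde\alpha,\widetilde\beta\in\End(E(j_1))$,
\[
\mathcal{O}=\Z+\ell\Z\,\widetilde\alpha+\ell\Z\,\widetilde\beta+\ell^{2}\Z\,\widetilde\alpha\widetilde\beta\ \subseteq\ \Z+\Z\,\widetilde\alpha+\Z\,\widetilde\beta+\Z\,\widetilde\alpha\widetilde\beta\ \subseteq\ \End(E(j_1)),
\]
and the first inclusion has index at least $\ell^{4}$ (or else $\mathcal{O}$ has $\Z$-rank $<4$), so once more $\mathcal{O}\subsetneq\End(E(j_1))$.

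The step I expect to be the main obstacle is the case $\mathcal{O}'=\End(E(j_1))$: a priori these two maximal orders can coincide --- precisely the degeneracy flagged before the theorem when $j_1$ and $j_2$ are Galois conjugate --- and the real work is the kernel-ideal / two-sided-ideal computation in $B_{p,\infty}$ forcing $\phi$ to be essentially a power of $[\ell]$, together with checking carefully what ``two cycles contain the same path'' permits, so that the stated hypotheses (the path passes through additional vertices, or $j_1^{p}\neq j_2$) really do exclude the configurations in which $\{1,\alpha,\beta,\alpha\beta\}$ could still be a basis.
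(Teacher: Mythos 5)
Your proof is correct, but it takes a genuinely different route from the paper's. The paper argues by contradiction: assuming $\{1,\alpha,\beta,\alpha\beta\}$ is a basis of $\End(E(j_1))$, it first reduces to the case $j_1^p\neq j_2$ by shortening the common path, then uses Lemma~\ref{lemma:traceofconjugate} to show the Gram matrix of $\{1,\alpha,\beta,\alpha\beta\}$ equals that of $\{1,\alpha',\beta',\alpha'\beta'\}\subseteq\End(E(j_2))$, invokes Nebe's isometry criterion \cite[Cor.~4.4]{Neb98} to conclude $\langle1,\alpha',\beta',\alpha'\beta'\rangle\cong\End(E(j_1))$, and then contradicts the Deuring correspondence. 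Your argument instead places $\mathcal{O}$ inside $\End(E(j_1))\cap\mathcal{O}'$ directly and splits into two cases. The distinct-orders case is quick; the coincidence case $\mathcal{O}'=\End(E(j_1))$ is handled by the kernel-ideal/two-sided-ideal calculus in $B_{p,\infty}$, which forces $\phi$ to be $[\ell^{e/2}]$ composed with an isomorphism — machinery the paper never develops, but which yields a cleaner dichotomy. What your approach buys: it sidesteps the paper's preliminary reduction ``replace $j_2$ by an earlier vertex to ensure $j_1^p\neq j_2$,'' which is in fact not always possible (a path $j_1\to j_1^p\to j_1\to j_1^p$ has no intermediate vertex outside $\{j_1,j_1^p\}$), whereas your two-sided-ideal argument handles exactly these configurations (odd $e$ gives an immediate contradiction, even $e\ge2$ forces $j_1=j_2$ and $\alpha,\beta$ divisible by $\ell$). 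Indeed, your proof shows the conclusion holds whenever $j_1\neq j_2$, a slightly stronger statement than the theorem as written. The trade-off is that you import the facts that $\Nrd(I_\phi)=\deg\phi$, that every integral left ideal of $\End(E)$ is a kernel ideal with $E[I(\ker\phi)]=\ker\phi$, and the classification of two-sided ideals of a maximal order of $B_{p,\infty}$; these are standard (Deuring, Waterhouse, \cite{Voight}) but would need to be cited in the paper's framework.
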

\begin{proof}
  We can assume that $j_1^{p}\neq
  j_2$, by replacing $j_2$ with an earlier vertex in the path if necessary. Let the path from
  $E(j_1)$ to $E(j_2)$ be correspond to the isogeny $\phi: E(j_1)\to
  E(j_2)$. By assumption, we can write $\alpha=\alpha_1\phi$ and write
  $\beta=\beta_1\phi$. Let
  $\alpha'=\phi\alpha_1$ and
  $\beta'=\phi\beta_1$ be the corresponding endomorphisms of
  $E(j_2)$. Assume towards contradiction that $\langle
  1,\alpha,\beta,\alpha\beta\rangle = \End(E(j_1))$. Denote the lists 
\begin{align*}
\{x_0,x_1,x_2,x_3\} &= \{1,\alpha,\beta,\alpha\beta\} \\ 
\{y_0,y_1,y_2,y_3\} &=\{1,\alpha',\beta',\alpha'\beta'\}.
\end{align*}
We now show that
$\tr(x_i\widehat{x_j})=\tr(y_i\widehat{y_j})$ for
$i,j=0,\ldots,3.$ Observe that
$[\deg\phi](x_i\widehat{x_j})=\widehat{\phi}y_i\widehat{y_j}\phi$, so
\[
\deg(\phi)\tr(x_i\widehat{x_j}) =
\tr(\widehat{\phi}y_i\widehat{y_j}\phi).
\]
On the other hand, we 
use Lemma~\ref{lemma:traceofconjugate} to compute
\[
\tr(\widehat{\phi}y_i\widehat{y_j}\phi)=\deg(\phi)\tr(y_i\widehat{y_j}).
\]

This implies that the embedding 
\begin{align*}
\End(E(j_2)) &\hookrightarrow \End(E(j_1))\otimes \QQ\\
\rho &\mapsto \widehat{\phi} \rho \phi  \otimes \frac{1}{\deg\phi} 
\end{align*}
maps $\langle 1,\alpha',\beta',\alpha'\beta'\rangle$ to an order
isomorphic to $\End(E(j_1))$ by \cite[Corollary 4.4]{Neb98}. But this violates Deuring's
correspondence.
\end{proof}

One might conjecture that two cycles in $G(p,\ell)$ which only
intersect at one vertex $E(j)$ generate $\End(E(j))$, but the example
in the following section shows this might not be true. In particular,
there is an example of two cycles which generate an order $\OO$ which
is not maximal, but there is a unique maximal order containing
$\OO$.



\section{Examples}\label{sec: examples}

We used the software package Magma to perform most of the computations
required to compute the endomorphism rings of supersingular elliptic
curves in characteristic $p$ with $p\in \{31,101,103\}$. In all cases we worked
with the $2$-isogeny graph. We started with the supersingular
$j$-invariants and found models for the elliptic curves $E(j)$ 
as in Equation \ref{ellcurve} that we transformed into ones of the form
$y^2=x^3+ax+b$ for some $A,B\in \F_{p^2}$. Then for every $E(j)$ we
computed the $2$-torsion points
 to generate its $2$-isogenies, as in Section~\ref{sec: defs and notation}.\\
By Theorem~\ref{thm:deuringcorrespondence} and
Lemma~\ref{lem:b_p_infty} we know that $\End(E(j))$ corresponds to a
maximal order in $B_{p,\infty}$.

For each vertex corresponding to $E(j)$, we select cycles in the
$2$-isogeny graph that satisfy the conditions of
Theorem~\ref{thm:independent endos} and compute their traces and
norms. Then we find elements of $B_{p,\infty}$ with these traces and norms
and verify that they generate a maximal order.

	\begin{example}[$p=31$]\label{ex:p=31}
		Let $p=31$. The unique quaternion algebra ramified at $p$ and $\infty$ is 
		
		\begin{equation*}
			B_{31,\infty}=\Q+\Q i+\Q j + \Q ij,
		\end{equation*}
		where $i^2=-1$ and $j^2=-31$.

		There are three $j$-invariants corresponding to
                isomorphism classes of supersingular elliptic curves
                over $\F_{p^2}$, namely $2$, $4$ and $23$. Figure~\ref{fig:isogenygraph_31_labeled} shows the $2$-isogeny graph with labeled
                edges.

		\begin{figure}[H]
			\centering
			\begin{tikzpicture}[->,>=stealth',shorten >=1pt,node distance=1.8cm, show background rectangle,auto ]
			\node[circle, minimum width=22pt,draw ](23) at (0,0) {$23$};
			\node[circle, minimum width=22pt,draw](2) at (3,0) {$2$};
			\node[circle, minimum width=22pt,draw](4) at (6,0) {$4$};
			
			\path
			(23) edge [loop left] node {$e_{23} $}(23)
			edge [bend left] node {$e_{23,2}' $}(2)
			edge  node {$e_{23,2} $}(2)
			(2) edge [bend left] node {$e_{2,23} $}(23)
			edge [loop below] node {$e_{2}$}(2)
			edge [bend left] node{$e_{2,4} $} (4)
			(4) edge [bend left] node[label=]{$e_{4,2} $} (2)
			edge [loop above] node{$e_{4} $} (4)
			edge [loop below] node{$e_{4}' $} (4)
			;

			\end{tikzpicture}
			\caption{$2$-isogeny graph for $p=31$.}
			\label{fig:isogenygraph_31_labeled}
		\end{figure}

		Table~\ref{table:p=31} contains, for each vertex, two
                cycles that correspond to elements that generate a
                maximal order in $B_{b,\infty}$. Hence these two
                cycles must generate the full endomorphism ring.

		\begin{table}[H]
			\begin{tabular}{ccccc}
				\textbf{Vertex} & \textbf{Cycle} & \textbf{Trace} & \textbf{Norm} \\ 
				\hline 
				\multirow{2}{*}	{$2$} & $e_2$ & $0$ & $2$   \\ 
				
				& $e_{2,4}e_4e_{4,2}$ & $2$ & $8$ \\ 
				\hline 
				\multirow{2}{*}	{$4$} &  $e_4$ & $1$ & $2$  \\ 
				
				& $e_{4,2}e_2e_{2,4}$ & $0$ & $8$&\\ 
				\hline 
				\multirow{2}{*}{$23$}		&$e_{23}$  & $2$ & $2$ \\ 
				&$e_{23,2}e_2e_{2,23}$ &$-1$  &$8$  \\ \hline		
			\end{tabular} \caption{}
			\label{table:p=31}
		\end{table}

		With this data we are able to generate the maximal orders that correspond to each endomorphism ring:
		\begin{align*}
			&\End(E({23}))\cong \left\langle1, -i,  -\frac{1}{2}i+\frac{1}{2}ij,\frac{1}{2}-\frac{1}{2}j \right\rangle,\\	
			&\End(E(2))\cong \left\langle 1,\frac{1}{4}i \frac{1}{4}ij,2i,\frac{1}{2}-\frac{1}{2}j\right\rangle,\\
			&\End(E(4)) \cong \left\langle 1, \frac{1}{2}+\frac{1}{6}i+\frac{1}{6}j-\frac{1}{6}ij,\frac{5}{6}i+\frac{1}{3}j+\frac{1}{6}ij,-\frac{13}{6}i+\frac{1}{3}j+\frac{1}{6}ij \right\rangle.\\
		\end{align*}

	\end{example}

	\begin{example}	[$p=103$]\label{ex:p=103}
		
		Let $p=103$. The unique quaternion algebra ramified at $p$ and $\infty$ is 
		
		\begin{equation*}
			B_{103,\infty}=\Q+\Q i+\Q j + \Q ij,
		\end{equation*}
		where $i^2=-1$ and $j^2=-103$.

		The supersingular $j$-invariants over $\F_{p^2}$ are
                $23,24,69,34,80$, and four defined over
                $\F_{p^2}-\F_p$: $\alpha$, $\beta$ and their
                conjugates. Figure~\ref{fig:isogenygraph_103_labeled}
                shows the $2$-isogeny graph.
		\begin{figure}[H]
			\centering
			\begin{tikzpicture}[->,>=stealth',shorten >=2pt,show background rectangle]
			\node[circle, minimum width=22pt,draw ](80) at (-1,0) {$80$};
			\node[circle, minimum width=22pt,draw](24) at (-1,4) {$24$};
			\node[circle, minimum width=22pt,draw](23) at (2,2) {$23$};
			\node[circle, minimum width=22pt,draw](69) at (4,2) {$69$};
			\node[circle, minimum width=22pt,draw](34) at (6,2) {$34$};
			\node[circle, minimum width=22pt,draw](a) at (9,4) {$\alpha$};
			\node[circle, minimum width=22pt,draw](a1) at (9,0) {$\overline{\alpha}$};
			\node[circle, minimum width=22pt,draw](b) at (12,4) {$\beta$};
			\node[circle, minimum width=22pt,draw](b1) at (12,0) {$\overline{\beta}$};

			\path
			(24)	edge [loop below] node[left]{$e_{24} $} ()
			edge [loop left] node[above]{$e_{24}'$}()
			(80) 	edge [loop left] node[below]{$e_{80}$}()
			edge [bend left=40] node[rotate= 33,midway,above]{$e_{80,23}'$}(23)
			(69)	edge [loop above] node[]{$e_{69}$}()
			(b)		edge [bend left=40] node[above,rotate=-90] {$e_{\beta,\overline{\beta}}'$}(b1)
			(b1) 	edge [bend left=40] node[below, rotate=-90] {$e_{\overline{\beta},\beta}'$}(b)			
			;
			\DoubleLine{b}{b1}{->}{}{<-}{}
			\draw[transparent] (b)-- node[opaque,above, rotate=-90]{$e_{\beta,\overline{\beta}}$} (b1); 
			\draw[transparent] (b)-- node[opaque,below, rotate=-90]{$e_{\overline{\beta},\beta}$} (b1); 
			\DoubleLine{a}{a1}{->}{}{<-}{}
			\draw[transparent] (a)-- node[above, opaque,rotate=-90]{$e_{\alpha,\overline{\alpha}}$} (a1); 
			\draw[transparent] (a)-- node[below, opaque,rotate=-90]{$e_{\overline{\alpha},\alpha}$} (a1);
			\DoubleLine{24}{23}{->}{}{<-}{}
			\draw[transparent] (24)-- node[above, opaque,rotate=-33]{$e_{24,23}$} (23);
			\draw[transparent] (24)-- node[below, opaque,rotate=-33]{$e_{23,24}$} (23);
			\DoubleLine{80}{23}{->}{}{<-}{}
			\draw[transparent] (80)-- node[above, opaque,rotate=33]{$e_{80,23}$} (23);
			\draw[transparent] (80)-- node[below, opaque,rotate=33]{$e_{23,80}$} (23);
			\DoubleLine{23}{69}{->}{$e_{23,69}$}{<-}{$e_{69,23}$}
			
			\DoubleLine{69}{34}{->}{$e_{69,34}$}{<-}{$e_{34,69}$}
			
			\DoubleLine{a}{34}{->}{}{<-}{}
			\draw[transparent] (a)-- node[below, opaque,rotate=30]{$e_{\alpha,34}$} (34);
			\draw[transparent] (a)-- node[above, opaque,rotate=30]{$e_{34,\alpha}$} (34);
			\DoubleLine{a}{b}{->}{$e_{\alpha,\beta}$}{<-}{$e_{\beta,\alpha}$}
			
			\DoubleLine{34}{a1}{->}{}{<-}{}
			\draw[transparent] (34)-- node[below, opaque,rotate=-30]{$e_{\overline{\alpha},34}$} (a1);
			\draw[transparent] (34)-- node[above, opaque,rotate=-30]{$e_{34,\overline{\alpha}}$} (a1);
			\DoubleLine{a1}{b1}{->}{$e_{\overline{\alpha},\overline{\beta}}$}{<-}{$e_{\overline{\beta},\overline{\alpha}}$}
			\end{tikzpicture}
			\caption{$2$-isogeny graph for $p=103$.}
			\label{fig:isogenygraph_103_labeled}
		\end{figure}

		After several computations, we were able to find
                generators for the maximal orders corresponding to all
                the endomorphism rings of supersingular curves $E(j)$ where
                $j\in \F_{{103^2}}$. Table~\ref{table:p=103} contains, for
                each such vertex two cycles that correspond to
                elements that generate the maximal order.

		\begin{table}[H]
			\begin{tabular}{ccccc}
				\textbf{Vertex} & \textbf{Cycle} & \textbf{Trace} & \textbf{Norm} \\ 
				\hline 
				\multirow{2}{*} {$34$}	 & $e_{34,\overline{\alpha}}e_{\overline{\alpha},\alpha}e_{\alpha,34}$& $-3$ & $8$ \\ 
				&  $e_{34,69}e_{69}e_{69,34}$ & $0$ & $8 $ \\ 
				\hline 
				
				\multirow{2}{*}	{$69$} &  $e_{69}$ & $0$ & $2$ \\ 
				& $e_{69,34}e_{34,\alpha}e_{\alpha,\overline{\alpha}}e_{\overline{\alpha},34}e_{34,69}$ & $-6$ & $ 32$ \\ 
				\hline 
				
				\multirow{2}{*} {$23$}	 & $e_{23,24}e_{24}e_{24,23}$ & $2$ & $8$ \\ 
				& $e_{23,80}e_{80}e_{80,23}$ & $-4$ & $ 8$ \\ 
				\hline 
				
				\multirow{2}{*}{$80$}	 & $e_{80}$ & $2$ & $2$ \\ 
				&  $e_{80,23}e_{23,69}e_{69}e_{69,23}e_{23,80}$ & $0$ & $ 32$ \\ 
				\hline 
				
				\multirow{2}{*}{$24$}	 & $e_{24}$ & $-1$ & $2$\\ 
				&  $e_{24,23}e_{23,69}e_{69}e_{69,23}e_{23,24}$ & $0$ & $32 $ \\ 
				\hline 		
				
			\end{tabular} \caption{}
\label{table:p=103}
		\end{table}
		In the case of the vertex $\alpha$, we found an example of two cycles that do not share an additional vertex but that do not generate a maximal order. For instance, the cycles
		\begin{align*}
			&e_{\alpha,\beta}e_{\beta,\overline{\beta}}'e_{\overline{\beta},\beta}'e_{\beta,\alpha}\\
			&e_{\alpha,34}e_{34,69}e_{69}e_{69,34}e_{34,\alpha}
		\end{align*}
			generate the order
                $\mathcal{O}=\left\langle 1, -\frac{1}{2} +
                  \frac{17}{6}i - \frac{1}{6}j + \frac{1}{6}ij,
                  -\frac{5}{2}i + \frac{1}{2} ij, -\frac{1}{2} -
                  \frac{22}{3} i - \frac{11}{6} j - \frac{2}{3} ij
                \right\rangle$ and there is a unique maximal order
                containing it, hence this corresponds to
                $\End(E({\alpha}))\cong\End(E({\overline{\alpha}}))$. Finally,
                there is only one maximal order remaining in
                $B_{103,\infty}$, which by
                Theorem~\ref{thm:deuringcorrespondence} is isomorphic
                to the endomorphism rings of $E({\beta})$ and
                $E({\overline{\beta}})$.
		
		The endomorphism rings are then isomorphic to the following maximal orders:
		\begin{align*}
			&	\End(E({80}))\cong\left\langle 1,i, \frac{1}{2}i+\frac{1}{2}ij,\frac{1}{2}+\frac{1}{2}j  \right\rangle, \\
			&	\End(E({23}))\cong\left\langle 1, 2i,\frac{3}{4}i+\frac{1}{4}ij,\frac{1}{2}-\frac{1}{2j} \right\rangle, \\
			&	\End(E({34}))\cong\left\langle 1, \frac{17}{14}i+\frac{1}{14}ij  \frac{15}{7}i-\frac{2}{7}ij,\frac{1}{2}-\frac{1}{2}j\right\rangle, \\
			&	\End(E({69}))\cong\left\langle 1,\frac{1}{2}+\frac{1}{7}i+\frac{3}{14}j,\frac{1}{2}-\frac{16}{7}i+\frac{1}{14}j,\frac{1}{2}-\frac{17}{14}i-\frac{1}{14}j-\frac{1}{2}ij\right\rangle, \\
			&	\End(E({24}))\cong\left\langle 1,\frac{1}{2}+\frac{3}{8}i+\frac{1}{8}ij,\frac{1}{2}-\frac{29}{8}i+\frac{1}{8}ij,-\frac{13}{8}i+\frac{1}{2}j+\frac{1}{8}ij\right\rangle. \\
			&	\End(E({\alpha}))\cong \left\langle -1,-\frac{1}{2}+\frac{1}{6}i-\frac{1}{6}j-\frac{1}{6}ij,3i, \frac{5}{6}i-\frac{1}{3}j+\frac{1}{6}ij   \right\rangle,\\
			&	\End(E({\beta}))\cong\left\langle 1, \frac{1}{2}+\frac{13}{10}i+\frac{1}{10}j-\frac{1}{10}ij, -\frac{12}{5}i+\frac{1}{5}j-\frac{1}{5}ij,\frac{1}{2}-\frac{3}{5}i+\frac{3}{10}j+\frac{1}{5}ij   \right\rangle.\\
		\end{align*}

	\end{example}

	\begin{example}	[$p=101$]\label{ex:p=101}
		
		Let $p=101$. The unique quaternion algebra ramified at $p$ and $\infty$ is 
		
		\begin{equation*}
			B_{101,\infty}=\Q+\Q i+\Q j + \Q ij,
		\end{equation*}
		where $i^2=-2$ and $j^2=-101$.

		The supersingular $j$-invariants over $\F_{101^2}$ are
                $64,0,21,57,3,59,66$, and two additional ones, which
                we denote by $\alpha$ and $\overline{\alpha}$, are
                defined over
                $\F_{p^2}-\F_p$. Figure~\ref{fig:isogenygraph_101_labeled}
                shows the $2$-isogeny graph.
		
		\begin{figure}[H]
			\centering
			\begin{tikzpicture}[->,>=stealth',shorten >=2pt,show background rectangle]
			
			
			\node[circle, minimum width=22pt,draw ](0) at (0,3) {$0$};
			\node[circle, minimum width=22pt,draw ](66) at (2.5,3) {$66$};
			\node[circle, minimum width=22pt,draw ](21) at (5,3) {$21$};
			\node[circle, minimum width=22pt,draw ](57) at (7.5,3) {$57$};
			\node[circle, minimum width=22pt,draw ](64) at (10,3) {$64$};
			\node[circle, minimum width=22pt,draw ](3) at (12.5,3) {$3$};
			\node[circle, minimum width=22pt,draw ](59) at (15,3) {$59$};
			\node[circle, minimum width=22pt,draw](a1) at (5,6) {$\alpha$};
			
			\node[circle, minimum width=22pt,draw](a2) at (5,0) {$\overline{\alpha}$};
			
			\draw[transparent] (66)--node [above, opaque, rotate=50]{$e_{66,\alpha}$}(a1);
			\draw[transparent] (66)--node [below, opaque,rotate=50]{$e_{\alpha,66}$}(a1);
			
			\draw[transparent] (a1)--node [above, opaque,  rotate=-50]{$e_{\alpha,57} $}(57);
			\draw[transparent] (a1)--node [below, opaque, rotate=-50]{$e_{57,\alpha} $}(57);
			
			\draw[transparent] (a1)--node [above, opaque, rotate=-90]{$e_{\alpha,21} $}(21);
			\draw[transparent] (a1)--node [below, opaque, rotate=-90]{$ e_{21,\alpha}$}(21);
			
			\draw[transparent] (21)--node [above, opaque, rotate=-90]{$e_{21,\overline{\alpha}} $}(a2);
			\draw[transparent] (21)--node [below, opaque, rotate=-90]{$e_{\overline{\alpha},21} $}(a2);
			
			\draw[transparent] (a2)--node [above, opaque, rotate=50]{$e_{\overline{\alpha},57} $}(57);
			\draw[transparent] (a2)--node [below, opaque, rotate=50]{$e_{57,\overline{\alpha}} $}(57);
			
			\draw[transparent] (a2)--node [above, opaque, rotate=-50]{$e_{66,\overline{\alpha}} $}(66);
			\draw[transparent] (a2)--node [below, opaque, rotate=-50]{$e_{\overline{\alpha},66} $}(66);
			
			\DoubleLine{0}{66}{->}{$e_{0,66}'$}{->}{$e_{0,66}$}
			\DoubleLine{66}{a1}{->}{}{<-}{}
			\DoubleLine{66}{a2}{->}{}{<-}{}
			\DoubleLine{a1}{57}{->}{}{<-}{}
			\DoubleLine{a2}{57}{->}{}{<-}{}
			\DoubleLine{a1}{21}{->}{}{<-}{}
			\DoubleLine{a2}{21}{->}{}{<-}{}
			\DoubleLine{57}{64}{->}{$e_{57,64}$}{<-}{$e_{64,57}$}
			\DoubleLine{64}{3}{->}{$e_{64,3}$}{<-}{$e_{3,64}$}
			\DoubleLine{3}{59}{->}{$e_{3,59}$}{<-}{$e_{59,3}$}
			\path
			
			(0) 	edge 	[bend left=50] node[above] {$e_{0,66}''$} (66)
			(66) 	edge 	[bend left=50] node[below] {$e_{66,0}$} (0)		
			
			(21) edge [loop right] node[right]{$e_{21}$}() 
			(59) edge [loop below] node[below]{$e_{59}'$}() 
			edge [loop above] node[above]{$e_{59}$}() 
			
			(64) edge [bend left=50] node[above] {$e_{64,3}'$}(3)
			(3)		edge [bend left=50] node [below] {$e_{3,64}'$}(64)
			;
			
			\end{tikzpicture}
			\caption{$2$-isogeny graph for $p=101$.}
			\label{fig:isogenygraph_101_labeled}
		\end{figure}

		It was possible to find two cycles that generate the
                maximal order corresponding to $\End(E(j))$ where
                $j \in \{3, 59, 64, 66 \}$. Table~\ref{table:p=101}
                contains the data for these cycles.

		\begin{table}[H]
			\begin{tabular}{ccccc}
				\textbf{Vertex} & \textbf{Cycle}  & \textbf{Trace} & \textbf{Norm} \\ 
				\hline 
				\multirow{2}{*} {$ 3$}		  & $ e_{3,59}e_{59}e_{59,3}$& $ 2$ & $8$ \\ 
				  & $e_{3,64}e_{64,3}' $ & $-1 $ & $ 4$ \\ 
				\hline 
				\multirow{2}{*} {$ 59$}		 &  $ e_{59}$& $-1$ & $2$ \\ 
				&  $ e_{59,3}e_{3,64}e_{64,3}e_{3,59}$ & $ -8$ & $16 $ \\ 
				\hline 
				\multirow{2}{*} {$64 $}		  & $e_{64,57}e_{57,\alpha}e_{\alpha,66}e_{66,\overline{\alpha}}e_{\overline{\alpha},57}e_{57,64} $& $ 10$ & $64 $ \\ 
				 & $ e_{64,3}e_{3,64}'$ & $ -1$ & $4 $ \\ 
				\hline 
				
				\multirow{2}{*} {$ 66$}		 & $e_{66,0}e_{0,66} $& $ 2$ & $4 $ \\ 
				&  $e_{66,\alpha}e_{\alpha,57}e_{57,\overline{\alpha}}e_{\overline{\alpha},66}$ & $ 5$ & $16 $ \\ 
				\hline

			\end{tabular} \caption{}
			\label{table:p=101}
		\end{table}
		
		For the vertices $21,57,\alpha$ no two cycles
                were found that generate the full endomorphism
                ring. However, in each of these cases we were able to
                to generate an order from two cycles which happened to
                be contained in a unique maximal order. These cycles
                are listed in Table~\ref{table:p=101_notmax}.

		\begin{table}[H]
			\begin{tabular}{cccccc}
				\textbf{Vertex} & \textbf{Cycle} &  \textbf{Trace} & \textbf{Norm} \\ 
				\hline 
				\multirow{2}{*} {$21 $}		 &  $ e_{21}$& $0 $ & $2$ \\ 
				&  $e_{21,\alpha}e_{\alpha,66}e_{66,0}e_{0,66}'e_{66,\alpha}e_{\alpha,21} $ & $-8 $ & $64 $ \\ 
				\hline 
				\multirow{2}{*} {$57$}		 &  $e_{57,64}e_{64,3}e_{3,59}e_{59}e_{59,3}e_{3,64}e_{64,57} $& $ -8$ & $128$ \\ 
				&   $e_{57,\alpha}e_{\alpha,66}e_{66,\overline{\alpha}}e_{\overline{\alpha},37} $& $-5$ & $16 $ \\ 
				\hline 
				
				\multirow{2}{*} {$\alpha $}		 &  $ e_{\alpha,21}e_{21}e_{21,\overline{\alpha}}e_{\overline{\alpha},57}e_{57,\alpha}$& $5 $ & $32$ \\ 
				&  $e_{\alpha,66}e_{66,0}e_{0,66}'e_{66,\alpha}$  & $ 4$ & $ 16$ \\ 
				\hline

			\end{tabular} \caption{}
			\label{table:p=101_notmax}
		\end{table}
		By Theorem~\ref{thm:nonmaximal order}, no two cycles
                through $j=0$ generate a maximal order, but it is possible
                to determine which one corresponds to the endomorphism
                ring of $E(0)$ once we ruled out the other seven. The
                endomorphism rings are then isomorphic to the
                following maximal orders:
		
		\begin{align*}
			\End(E(3)) &\cong \left\langle 1, \frac{1}{2} - \frac{13}{12}i + \frac{1}{12}ij,\frac{5}{6}i +\frac{1}{6}ij, \frac{5}{12}i -  \frac{1}{2}j +  \frac{1}{12}ij \right\rangle,\\
			\End(E({59})) &\cong \left\langle   1, \frac{1}{2} + \frac{5}{12}i - \frac{1}{12}ij, -\frac{13}{6}i - \frac{1}{6}ij, -\frac{13}{12}i + \frac{1}{2}j - \frac{1}{12}ij  \right\rangle,\\
			\End(E({64}))&\cong \bigg\langle   -1,
                          -\frac{1}{2} - \frac{3}{5}i -\frac{1}{10}j
                          +\frac{1}{10}ij, -\frac{1}{2} -
                          \frac{21}{20}i + \frac{1}{5}j +
                          \frac{1}{20}ij,  \\
&\quad\quad-\frac{67}{20}i - 1/10j - 3/20ij \bigg\rangle,\\
			\End(E({66}))&\cong  \left\langle  1, \frac{7}{10}i - \frac{1}{10}ij, \frac{1}{2} - \frac{29}{20}i - \frac{3}{20}ij, \frac{7}{20}i - \frac{1}{2}j - \frac{1}{20}ij \right\rangle,\\
			\End(E({21}))&\cong\left\langle  -1, i, -\frac{1}{2} + \frac{1}{4}i - \frac{1}{4}ij, -\frac{1}{2} + \frac{1}{2}i -\frac{1}{2}j \right\rangle,\\
			\End(E({57}))&\cong \left\langle 1, \frac{1}{2} - \frac{13}{28}i + \frac{1}{7}j + \frac{1}{28}ij, -\frac{53}{28}i - \frac{1}{14}j + \frac{3}{28}ij, \frac{1}{2} - \frac{11}{4}i -\frac{1}{4}ij  \right\rangle,\\
			\End(E(0))&\cong  \left\langle  -1, -\frac{1}{2} + \frac{7}{20}i + \frac{1}{20}ij, -\frac{1}{2} + \frac{9}{5}i + \frac{1}{2}j - \frac{1}{10}ij, -\frac{29}{20}i + \frac{1}{2}j + \frac{3}{20}ij \right\rangle,\\
			\End(E(\alpha))&\cong\End(E({\overline{\alpha}}))  \cong \left\langle -1, 2i, -\frac{1}{2} + \frac{3}{8}i + \frac{1}{4}j - \frac{1}{8}ij, -\frac{7}{8}i + \frac{1}{4}j + \frac{1}{8}ij \right\rangle.\\
		\end{align*}

	\end{example}

\section*{Appendix - Modified Schoof's algorithm for traces of arbitrary endomorphisms}
Let $E$ be an elliptic curve over a finite field $\F_q$ of
characteristic $p \neq 2,3$. The Frobenius endomorphism
$\phi \in \End_{\F_q}(E)$ takes any point $(x,y) \in E(\F_q)$ to
$(x^q, y^q)$; it satisfies the relation in $\End_{\F_q}(E)$, given by
\[
\phi^2 - t\phi + q = 0.
\]
Here, $t$ is called the trace of the Frobenius endomorphism, and it is related to the number of $\F_q$-points on $E$ via the relation
\[
\#E(\F_q) = q + 1 - t.
\]

Schoof's algorithm~\cite{Schoof} computes the trace of the Frobenius
endomorphism in $O(\log^9q)$ elementary operations (bit
operations). This algorithm has been improved in~\cite{SS15} to be
completed in $O(\log^5q \log\log q)$ operations.

We outline a modification of Schoof's algorithm that computes the
trace of any endomorphism $\alpha \in \End_{\F_q}(E)$ that corresponds
to a cycle in the $\ell$-isogeny graph, where $\ell \neq p$ is a
prime. That is, we assume that we are given a path of length $e$ in an
$\ell$-isogeny graph; this path can be represented as $e$ elliptic
curves $E_1, \ldots, E_e$ over $\F_q$ in short Weierstrass form,
defined over $\F_{p^2}$, together with the coordinates of the
$\ell$-torsion points $P_1, \ldots, P_e$ in $E_1, \ldots, E_e$,
respectively, that generate the kernel of order $\ell$ giving the
respective edges on the isogeny graph. By
Corollary~\ref{cor:isogenydefined} we can take this isogeny to be
defined over an extension of degree at most degree $6$ of $\F_{p^2}$
. If $\ell=O(\log p)$ and the path has length $e=O(\log p)$, which are
the parameters that are most interesting, we will show that the trace
of this endomorphism can be computed in $\tilde{O}(\log^7 p)$ time by
using a modified version of Schoof's algorithm (as well as V\'elu's
formula, to compute the explicit equation for the endomorphism).

The na\"ive computation of the composition of the $e$ isogenies via
V\'elu's formula yields a formula for the $\ell^e$-isogeny that
requires at least $O(\ell^e)$ elementary operations; in order to cut
down on the number of elementary operations required to compute the
explicit formula for the isogeny, we note that the explicit isogeny
formula is simpler on the set of $m$-torsion points for any $m$, by
taking the quotient modulo the division polynomials. Thus,
$\ell^e$-isogenies on $E[m]$ can be computed much more quickly, and
this is sufficient information to which one can apply Schoof's idea.

\subsection{Division polynomials}

 Let $f_{k}(X)$ denote the
$k$-th division polynomial of $E$. It is a polynomial whose roots are the $x$-coordinates of the nonzero
elements of the $k$-torsion subgroup of $E$. When $k$ is coprime to
$p$, the degree of $f_k$ is
$(k^2-1)/2$.
The division polynomials can be defined recursively and the complexity
of computing them is analyzed in \cite{SS15}.
%

Let $M(n)$ denote the number
of elementary operations required to multiply two $n$-bit integers. If
we choose to multiply two $n$-bit integers via long multiplication,
then $M(n) = O(n^2)$; if we multiply two numbers using the Fast
Fourier Transform (FFT), then $M(n) = O(n \log n \log \log n)$.

\begin{proposition}\label{P: complexitydivpoly}
Given a natural number $m>1$, the division polynomials $f_1, \ldots, f_m$ can be computed in $O(m M(m^2\log q))$ time. 
\end{proposition}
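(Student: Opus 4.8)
The plan is to reproduce the analysis of \cite{SS15}. I would compute $f_1, f_2, \dots, f_m$ bottom-up, one index at a time, using the classical recursion: $f_1,\dots,f_4$ are given by explicit closed formulas, and for $k\ge 5$ one has $f_{2n+1} = f_{n+2}f_n^3 - f_{n-1}f_{n+1}^3$ together with an analogous expression for $f_{2n}$ (after clearing the even-index factor $2y$ and using $y^2 = x^3+Ax+B$ to eliminate $y$). With that substitution each $f_k$ is, up to the harmless factor $2y$ for even $k$, a polynomial in $x$ over $\F_q$ of degree $O(k^2) = O(m^2)$. The first thing I would check is that this order of computation is valid: forming $f_{2n+1}$ or $f_{2n}$ uses only $f_{n-2},f_{n-1},f_n,f_{n+1},f_{n+2}$, whose largest index is at most about $m/2+2 \le m$ for $k\le m$, so each new $f_k$ is assembled purely from previously computed polynomials. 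I would also note that the cube terms $f_n^3$ have degree $3\cdot O((k/2)^2) < k^2$, so every polynomial produced while computing $f_k$ --- the cubes, the pairwise products, and their difference --- still has degree $O(m^2)$.

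The substance of the argument is the cost of a single step, which is $O(1)$ additions, cubings, and multiplications of polynomials in $\F_q[x]$ of degree $O(m^2)$; since a cubing is two multiplications it suffices to bound one multiplication. I would do this by Kronecker substitution (cf.\ \cite{SS15}): packing each of the $O(m^2)$ coefficients --- each representable in $O(\log q)$ bits --- into a slot wide enough to hold the coefficients of the product without carries, then evaluating both polynomials at the corresponding power of $2$, reduces the multiplication to a single multiplication of integers with $O(m^2\log q)$ bits, followed by reading off and reducing the $O(m^2)$ coefficients in $\F_q$. (Here one uses $\log m = O(\log q)$, as holds in the regime of interest; otherwise one replaces $\log q$ by $\log(mq)$.) This costs $O(M(m^2\log q))$; the $O(m^2)$ coefficientwise reductions cost $O(m^2 M(\log q)) = O(M(m^2\log q))$ and the additions even less, so both are subdominant. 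Hence one step costs $O(M(m^2\log q))$, and summing over $k = 1,\dots,m$ yields $O(m\,M(m^2\log q))$.

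The only place that genuinely needs care is this reduction of polynomial arithmetic over $\F_q$ to one integer multiplication, together with the degree bookkeeping: one must confirm that the cube terms in the recursion never push an intermediate degree past $O(m^2)$, and that the Kronecker slot width is large enough to prevent carries between coefficients. The remaining ingredients --- the explicit base cases, the index inequalities justifying the bottom-up order, and the subdominance of additions and reductions --- are routine.
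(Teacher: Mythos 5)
Your proposal is correct and takes essentially the same route as the paper, which simply cites the recursive structure of the division polynomials and \cite[Section 5.1]{SS15} for the complexity. You fill in the details the paper leaves implicit: the degree bookkeeping for the intermediate products, the reduction of polynomial multiplication over $\F_q$ to integer multiplication via Kronecker substitution, and the superadditivity of $M$ that makes the coefficientwise reductions subdominant. The one cosmetic difference is that the paper phrases the per-polynomial cost in terms of a ``double-and-add'' scheme for computing an individual $f_k$ in $O(M(k^2\log q))$ time, whereas you compute all of $f_1,\dots,f_m$ bottom-up and bound each step by $O(M(m^2\log q))$; since the proposition asks for the whole list anyway, your sequential approach is the natural one, and both give $O(mM(m^2\log q))$.
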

\begin{proof}
Using the recursive relations defining the division polynomials, $f_k$
can be computed in $O(M(k^2\log q))$ time by using a double-and-add
method. Thus $f_1,\ldots,f_m$ can be computed in $O(mM(m^2\log q))$
time; see~\cite[Section 5.1]{SS15}. 
\end{proof}

\subsection{V\'elu's formula}\label{sec:velusFormula}

We continue to work over $\F_{q}$; typically we will work over an
extension of $\F_{p^2}$ of degree at most $6$. If $E/\F_{q}$ is an elliptic
curve, then $E[\ell] \isom \Z/\ell\Z \times \Z/\ell\Z$ is defined over
a field extension of $\F_q$ of degree $O(\ell^2)$, since the
$\ell$-th division polynomial $f_{\ell}$ has degree $(\ell^2-1)/2$.

Since an $\ell$-isogeny has degree $\ell$ and $\ell$ is prime, its kernel must be
generated by a point $Q \in E[\ell]$. Thus we will represent each
directed edge $E(i) \to E(j)$ by a such a point
$Q$. 

In this subsection we provide a way to compute the explicit formulas
for the elements of $\End(E)$ that appear as cycles in the
supersingular $\ell$-isogeny graph in characteristic $p$; such an endomorphism is a composition of
$\ell$-isogenies, and it often suffices to know the explicit equations
for the sequence of $\ell$-isogenies. We are adapting the
work of V\'elu~\cite{Vel71}. 

\subsubsection{$\ell$-isogenies}
\label{SS: isogenies}
Let $\psi: E \to E'$ be an $\ell$-isogeny, and let
$F = \ker \psi = \langle Q \rangle$ with $Q \in E[\ell]$ so that
$E' \isom E/F$. For each point $P \in E(k)$, define two functions
$X, Y \in k(E)$ by:
\begin{equation} \label{E: veluisogeny}
X(P) = x(P) + \sum_{i=1}^{\ell-1} [x(P+iQ)-x(iQ)];\quad
Y(P) = y(P) + \sum_{i=1}^{\ell-1}[y(P+iQ)-y(iQ)].
\end{equation}
Then $X(P) = X(P+R)$ for any $R \in F = \langle Q \rangle$ by
definition, and similarly for $Y$. Thus, $X, Y \in k(E')$. 
In fact, $k(E') = k(X,Y)$. Then the isogeny $\psi: E \to E'$
given by the equation $P \mapsto (X(P),Y(P))$ is the unique
$\ell$-isogeny from $E$ to $E'$ given by the kernel $F$.

\subsubsection{Explicit equations}
Let $E: y^2 = x^3 + ax + b$ be the Weierstrass equation for $E$, and
$Q = (x(Q), y(Q)) \in E[\ell]$ be a generator of $F$. We provide an
explicit equation for $X$ and $Y$ in terms of $x$ and $y$. Instead of
computing all $\ell$ terms in the summation in \S \ref{SS: isogenies},
it is in fact easier to compute $(\ell-1)/2$ terms given by
\begin{equation}\label{E: veluisogenyxcoord}
x(P+iQ)-x(iQ)+x(P-iQ)-x(-iQ),
\end{equation}
where $i$ ranges between $1, \cdots, (\ell-1)/2$. These values are
obtained from the addition formula:
\[
x(P+iQ)-x(iQ)+x(P-iQ)-x(-iQ) = \frac{6x(iQ)^2+2a}{(x(P)-x(iQ))^2}  + \frac{4y(iQ)^2}{(x(P)-x(iQ))^3},
\]

and
\begin{align*}
y(P+iQ)&-y(iQ)+y(P-iQ)-y(-iQ) \\
&= \frac{-8y(P)y(iQ)^2}{(x(P)-x(iQ))^3} - \frac{(6x(iQ)^2+2a)(y(P)-y(iQ))}{(x(P)-x(iQ))^2} + \frac{(3x(iQ)^2+a)(-2y(iQ))}{(x(P)-x(iQ))^2}.
\end{align*}

One can also compute an explicit affine equation for the elliptic
curve $E' \isom E/F : Y^2 = X^3 + AX + B$ by considering the relation
between $X$ and $Y$:
\[
A = a-5\sum_{i=1}^{(\ell-1)/2} (6x(iQ)^2+2a), \quad 
B = b-7\sum_{i=1}^{(\ell-1)/2} (4y(iQ)^2 + x(iQ)(6x(iQ)^2+2a)).
\]

\subsubsection{Complexity of computing isogenies}
\begin{proposition}\label{P: veluisogeny}
  Let $E$ be an elliptic curve over $\F_q$ whose equation is given in
  Weierstrass form $y^2 = x^3 + ax + b$, along with
  $Q = (x(Q), y(Q)) \in E[\ell]$, a nontrivial $\ell$-torsion
  point. The explicit equation for the isogeny
  $\psi: E \to E/\langle Q \rangle$ and a Weierstrass equation for
  $E/\langle Q \rangle$ can be computed in $O(\ell^4 M(\log q))$ time.
\end{proposition}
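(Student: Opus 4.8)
The plan is to run V\'elu's formulas exactly as set up in \S\ref{SS: isogenies} and the subsequent ``Explicit equations'' paragraph, and to account for the cost of each arithmetic step. First I would compute the multiples $Q, 2Q, \ldots, \tfrac{\ell-1}{2}Q$ on $E$ by repeated point addition: this is $O(\ell)$ elliptic-curve group operations, each a bounded number of $\F_q$-multiplications together with one inversion, so $O(\ell\, M(\log q))$ time (the inversions are no more expensive than this, and can in any case be replaced by a single inversion plus $O(\ell)$ extra multiplications by the standard simultaneous-inversion trick). This yields every quantity $x(iQ),\, y(iQ)$ occurring in \eqref{E: veluisogenyxcoord} and in the displayed formulas for $A$ and $B$. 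The coefficients $A, B$ of a Weierstrass model $Y^2 = X^3 + AX + B$ for $E' \cong E/\langle Q\rangle$ are then each a sum of $(\ell-1)/2$ terms, each a fixed polynomial in $a, b, x(iQ), y(iQ)$ evaluated with $O(1)$ multiplications, adding only a further $O(\ell\, M(\log q))$.

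Next I would assemble the rational map $\psi\colon (x,y) \mapsto (X, Y)$. By \S\ref{SS: isogenies} and the explicit formulas, $X$ is $x$ plus a sum of $(\ell-1)/2$ rational functions of $x$, the $i$-th having denominator a power of $(x - x(iQ))$ and numerator of bounded degree, while $Y$ equals $y$ times a rational function of $x$ of the same shape; putting each over the natural common denominator $\prod_{i}(x-x(iQ))^{c}$ (with $c \le 3$), of degree $O(\ell)$, the numerators also have degree $O(\ell)$ because $\psi$ is a separable degree-$\ell$ map whose coordinate functions have poles only at the $\le \ell - 1$ nonzero points of $\ker\psi$. Forming these numerators is therefore $O(\ell)$ accumulations of products of $O(\ell)$ low-degree factors; each such product is built by $O(\ell)$ multiplications of a degree-$O(\ell)$ polynomial by a constant-degree one, i.e. $O(\ell^2)$ coefficient multiplications, and summing the $O(\ell)$ results is lower order, so the whole assembly (for both $X$ and $Y$) costs $O(\ell^3)$ coefficient multiplications, each an $\F_q$-multiplication costing $O(M(\log q))$. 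Altogether the algorithm runs in $O(\ell^3 M(\log q))$ time, which is within the claimed $O(\ell^4 M(\log q))$; the slack means one need not be careful about reducing numerators modulo the denominator, re-using partial products, or using fast polynomial arithmetic.

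The one point that needs care is exactly this degree bookkeeping: one must verify that the numerators of $X$ and of $Y/y$ have degree $O(\ell)$, so that every polynomial multiplication above has both factors of degree $O(\ell)$ and not larger --- this is the content of the pole-order statement and is where I would be explicit. Finally, if the generator $Q$ of the kernel is defined not over $\F_q$ but over an extension $\F_{q^{d}}$, every $\F_q$-multiplication above becomes an $\F_{q^d}$-multiplication of cost $O(M(d\log q))$; in the cases of interest $d$ is absolutely bounded (at most $6$, by Corollary~\ref{cor:isogenydefined}), so this is absorbed and the bound stands as stated.
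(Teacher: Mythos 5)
Your proof has a genuine gap in the cost accounting, and it comes from the last paragraph, where you claim the field of definition of $Q$ is an extension of bounded degree.

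Corollary~\ref{cor:isogenydefined} says that the \emph{isogeny} $\phi$ is defined over $\F_{p^{2d}}$ for some $d\le 6$. It does not say that a generator $Q$ of $\ker\phi$ has coordinates in $\F_{p^{2d}}$. Rationality of $\phi$ only forces $\langle Q\rangle$ to be a Galois-stable subgroup: a Frobenius can act on $\langle Q\rangle$ by an element of $(\Z/\ell\Z)^{\times}$, so even for the kernel of a rational isogeny the point $Q$ generically lives in an extension of degree on the order of $\ell-1$. Moreover, the proposition as stated takes $Q$ to be an \emph{arbitrary} nonzero point of $E[\ell]$, which is why the paragraph immediately preceding the proposition records that $E[\ell]$ is defined over an extension of $\F_q$ of degree $O(\ell^2)$ (the degree of $f_\ell$ is $(\ell^2-1)/2$). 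So $d$ is not $O(1)$; it is as large as $O(\ell^2)$, and cannot be ``absorbed.''

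This is exactly what produces the exponent $4$. The paper's own proof is much shorter than yours on the polynomial-assembly side and makes the field extension do all the work: computing the multiples $iQ$ needed in V\'elu's formulas takes $O(\ell)$ point additions, and each addition is done with coordinates in an extension of degree $O(\ell^2)$ over $\F_q$, which the paper prices at $O(\ell^3 M(\log q))$ per addition, giving $O(\ell^4 M(\log q))$ in total. In your accounting, every $\F_q$-multiplication in steps (1) and (2) should instead be an operation on field elements represented by $O(\ell^2)$ coefficients over $\F_q$; once you put that in, the $O(\ell^3 M(\log q))$ you obtain is no longer the true cost, and in fact does not even obviously stay below $O(\ell^4 M(\log q))$ without some care. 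Your degree bookkeeping on the numerators of $X$ and $Y/y$ (that each has degree $O(\ell)$) is correct and is more explicit than anything the paper says, but it does not repair the field-of-definition issue, which is the crux of the bound.
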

\begin{proof}
  Using V\'elu's formula, rational functions for $\psi$ can be
  computed using $\ell$ many additions in $E[\ell]$. We can add two
  points in $E[\ell]$ in $O(\ell^3M(\log q))$ time, since we are
  working over an extension of $\F_q$ of degree $O(\ell^2)$. Thus the
  total time to compute $\psi$ is $O(\ell^4M(\log q))$. Similarly,
  using V\'elu's formulas, we can compute an equation for
  $E/\langle Q \rangle $ in $O(\ell^4 \log q)$ time.
\end{proof}

Given an $\ell$-isogeny $\psi: E \to E':=E/\langle Q
\rangle$ whose kernel is generated by some $Q \in E[\ell]$, as well as
a prime $m \neq 2,p$, we are interested in the explicit formula
for the induced isogeny on the $m$-torsion points $\psi_m: E[m] \to
E'[m]$. If $E$ is defined by the equation $y^2 = x^3 + ax + b$, and
$f_m(x)$ is the $m$-th division polynomial for $E$, then $E[m] = \Spec
\F_q[x,y]/I$, where $I = \langle f_m(x), y^2 - (x^3 + ax + b)\rangle$;
thus, we may reduce the coordinates of the explicit formula for the
isogeny $\psi$ given by $(x,y) \mapsto (X(x,y), Y(x,y))$ modulo the
ideal $I$, and the resulting map $\psi_m$ agrees with $\psi$ on
$E[m]$.

\begin{proposition}\label{P: reduction}
  Keeping the notation of the discussion in the above paragraph,
  $\deg \psi_m = O(m^2)$, and $\psi_m$ can be computed in
  $O(M(d\log q)\log d)$ elementary operations, where
  $d \in \max \{\ell, m^2\}$.
\end{proposition}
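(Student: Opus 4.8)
The plan is to view $\psi_m$ as nothing more than the V\'elu rational functions for $\psi$ with the $x$-part of each coordinate reduced modulo the $m$-th division polynomial $f_m(x)$, and then to bound the cost of that reduction using fast polynomial arithmetic over $\F_q$. First I would record the shape of the explicit equations for $\psi$ from \S\ref{SS: isogenies} and the ``Explicit equations'' subsection: writing $\psi_F(x) = \prod_{i=1}^{(\ell-1)/2}\bigl(x - x(iQ)\bigr)$ for the kernel polynomial of $F = \langle Q \rangle$, V\'elu's formulas present $\psi$ as $(x,y) \mapsto \bigl(X(x),\, Y(x,y)\bigr)$ where $X(x) = N(x)/\psi_F(x)^2$ with $\deg N = O(\ell)$, and $Y(x,y) = y\, M(x)/\psi_F(x)^3$ with $\deg M = O(\ell)$; all of $N$, $M$, $\psi_F$ lie in $\F_q[x]$ and have degree $O(\ell)$. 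Since $E[m] = \Spec \F_q[x,y]/I$ with $I = (f_m(x),\, y^2 - (x^3 + ax + b))$ and $\deg f_m = (m^2-1)/2 = O(m^2)$, the induced map $\psi_m$ is obtained by reducing these functions modulo $I$; because $X$ involves only $x$ and $Y$ is $y$ times a rational function of $x$, this amounts to reducing $N$, $\psi_F^2$, $M$, $\psi_F^3$ modulo $f_m(x)$ and inverting the reduced denominators. In particular the numerator and denominator of each coordinate of $\psi_m$ may be taken of degree $< \deg f_m = O(m^2)$, which gives the first assertion.

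For the complexity, set $d = \max\{\ell, m^2\}$, so that every polynomial above has degree $O(d)$, and use fast (FFT-based) polynomial arithmetic over $\F_q$, for which arithmetic on degree-$O(d)$ polynomials costs $O(M(d\log q))$ bit operations (as in the proof of Proposition~\ref{P: complexitydivpoly} and \cite{SS15}). The computation has three steps. First, reduce $N$, $\psi_F^2$, $M$, $\psi_F^3$ modulo $f_m$: this is a bounded number of polynomial divisions of degree-$O(d)$ polynomials, at cost $O(M(d\log q))$. Second, invert the reduced denominators modulo $f_m$; here one checks that $\psi_F$ is a unit modulo $f_m$, because the roots of $\psi_F$ are $x$-coordinates of nonzero $\ell$-torsion points while those of $f_m$ are $x$-coordinates of nonzero $m$-torsion points, and since $\ell \neq m$ are primes no nonzero point is simultaneously $\ell$- and $m$-torsion, so $\gcd(\psi_F, f_m) = 1$ in $\F_q[x]$; the inverse is then computed by the fast extended Euclidean algorithm on degree-$O(d)$ polynomials at cost $O(M(d\log q)\log d)$. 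Third, multiply each reduced numerator by the corresponding inverted denominator and reduce modulo $f_m$, at cost $O(M(d\log q))$. Summing, the dominant term is the extended-gcd step, which yields the claimed bound $O(M(d\log q)\log d)$.

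The \textbf{main point that needs care} is the second step: one must justify that the V\'elu denominators are invertible modulo $f_m$ --- which follows from the coprimality of $\ell$- and $m$-torsion above together with the separability of $f_m$ when $m$ is coprime to $p$ --- and one must invoke the quasi-linear-times-$\log d$ complexity of the fast extended Euclidean algorithm over a finite field. The remaining bookkeeping is routine: the reduction in the first step is only nontrivial when $\ell \gtrsim m^2$, and one should note at the outset that $\psi$ is assumed to be given by its V\'elu equations (for instance as produced by Proposition~\ref{P: veluisogeny}), so that this proposition accounts only for the additional cost of passing from $\psi$ to its restriction $\psi_m$ on $E[m]$.
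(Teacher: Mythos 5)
Your proposal follows the same basic outline as the paper's proof — bound the degree of the V\'elu formulas at $O(\ell)$, observe that reducing modulo $I = (f_m, y^2 - (x^3+ax+b))$ brings the degree down to $O(m^2)$, and then bound the cost of that reduction — but the paper handles the complexity bound entirely by citing \cite[Lemma~9]{SS15}, whereas you derive it from scratch by splitting into a reduction step, an extended-Euclidean inversion of the denominator $\psi_F$ modulo $f_m$, and a final multiplication, with the $\log d$ factor coming from the fast extended gcd. This is a genuine (and informative) unpacking of what is a black-box citation in the paper, and it buys you an explicit accounting of where the $\log d$ comes from. One small point to be aware of: your invertibility argument ($\gcd(\psi_F, f_m)=1$) silently uses $m \neq \ell$, since for $m = \ell$ the $\ell$- and $m$-torsion are the same and the kernel polynomial of $\psi$ divides $f_m$. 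The paper's hypothesis on $m$ is only $m \neq 2, p$, but $m \neq \ell$ is needed regardless in the Schoof-style application (one simply omits $m = \ell$ from the product in~\eqref{eq: N as product}), so this is a restriction worth stating explicitly rather than a flaw. As an alternative that avoids the issue entirely, one can keep the coordinates of $\psi_m$ as reduced numerator--denominator pairs and never invert, postponing all comparisons in the Schoof loop to cross-multiplication; this is closer to the usual implementation and removes the need for the coprimality argument (and the $\log d$ then has to be accounted for elsewhere, as in~\cite{SS15}).
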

\begin{proof}
  First we estimate the degree of $\psi$: each expression
  $x(P+iQ)-x(iQ)+x(P-iQ)-x(-iQ)$ has at most a cubic denominator in
  $x$, and summing over $(\ell-1)/2$ terms means that
  $\deg X(P) \leq 3(\ell-1)/2$. Similarly, $\deg Y(P)$ is $O(\ell)$.

  Now, we may replace any appearance of $y^{2}$ with the equation of
  the elliptic curve $E: y^{2}=x^{3}+ax+b$ so that the expressions
  $x(\psi(P))$ and $y(\psi(P))$ involve only powers of $x$ and is
  possibly at most linear in $y$. Next, reduce modulo $f_{m}(x)$, so
  that that the degree of the resulting expression is bounded by
  $\deg f_{m} = O(m^2)$.

%

  Let $d$ be such that $d \in \max\{\ell, m^2\}$, so that
  $\deg x(\psi(P)), \deg y(\psi(P)), \deg f_{m} \leq O(d)$.  Then
  by~\cite[Lemma 9, p.
  315]{SS15}, 
  it takes $O(M(d\log q)\log d)$ elementary operations to compute the
  reduction of the isogeny formula modulo $f_m$.
\end{proof}

\subsection{Computing the trace on $m$-torsion points}\label{sec: traceFromNorm} 

Now we compute the trace of an endomorphism $\psi \in \End(E)$, where
$\psi$ appears as a cycle of length $e$ in the supersingular $\ell$-isogeny graph
in characteristic $p$. We present a modification of Schoof's algorithm in order to
accomplish this.

The endomorphism $\psi$ satisfies the equation
$x^2-\tr(\psi)x+\norm(\psi)$. There is a simple relationship between
$\tr(\psi)$ and $\norm(\psi)$:
\begin{lemma} \label{L: boundfortrace}
Let $\psi \in \End(E)$. Then $|\tr(\psi)| \leq 2 \norm(\psi)$.
\end{lemma}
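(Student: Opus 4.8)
The plan is to pass to the quaternion algebra $B_{p,\infty}$ via the Deuring correspondence, where $\psi$ corresponds to an element whose reduced trace and reduced norm agree (up to sign conventions) with $\tr(\psi)$ and $\norm(\psi)$. In $B_{p,\infty}$, write $\psi = a_1 + a_2 i + a_3 j + a_4 ij$, so that $\Trd(\psi) = 2a_1$ and $\Nrd(\psi) = a_1^2 - a a_2^2 - b a_3^2 + ab a_4^2$, where $B_{p,\infty} = H(a,b)$ with $a,b < 0$ (by Lemma~\ref{lem:b_p_infty}, we may always take $a,b$ negative). Since $\psi$ is an endomorphism of an elliptic curve, $\Nrd(\psi) = \deg(\psi) \geq 0$, and in fact $\Nrd$ restricted to $\End(E)$ is a positive definite quadratic form; this is the key structural input.

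The main computation is then elementary: because $a < 0$ and $b < 0$, the three terms $-a a_2^2$, $-b a_3^2$, and $ab a_4^2$ are all nonnegative, so $\Nrd(\psi) = a_1^2 + (\text{nonnegative}) \geq a_1^2$. Hence $|a_1| \leq \sqrt{\Nrd(\psi)}$, which gives $|\Trd(\psi)| = 2|a_1| \leq 2\sqrt{\norm(\psi)}$. One could even state the sharper bound $|\tr(\psi)| \leq 2\sqrt{\norm(\psi)}$ in this way. Since $\norm(\psi)$ is a nonnegative integer, $\sqrt{\norm(\psi)} \leq \norm(\psi)$ whenever $\norm(\psi) \geq 1$ (and the case $\norm(\psi) = 0$ forces $\psi = 0$, hence $\tr(\psi) = 0$, so the bound holds trivially). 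This yields the stated inequality $|\tr(\psi)| \leq 2\norm(\psi)$.

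Alternatively, and perhaps more cleanly, one can argue directly from the characteristic polynomial: $\psi$ satisfies $x^2 - \tr(\psi) x + \norm(\psi) = 0$, and since $\End(E)$ embeds in a quaternion algebra split at $\infty$, the two roots of this polynomial are complex conjugates of absolute value $\sqrt{\norm(\psi)}$ (equivalently, the polynomial has nonpositive discriminant because $\Nrd$ is positive definite on $\Z + \Z\psi$). Writing the roots as $re^{\pm i\theta}$ with $r = \sqrt{\norm(\psi)}$, we get $\tr(\psi) = 2r\cos\theta$, so $|\tr(\psi)| \leq 2r = 2\sqrt{\norm(\psi)} \leq 2\norm(\psi)$.

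There is no real obstacle here; the only point requiring care is the justification that $\Nrd$ is positive definite on $\End(E)$ (equivalently that $\deg$ is a positive definite quadratic form on $\End(E)$, which is standard, e.g.\ \cite[III.6]{AEC}) and keeping the sign conventions for $a,b$ straight so that the cross terms in $\Nrd$ are genuinely nonnegative. The weaker bound $2\norm(\psi)$ in the statement (rather than $2\sqrt{\norm(\psi)}$) presumably suffices for the application in the modified Schoof algorithm, where it is used to bound the number of primes $m$ for which one must compute $\tr(\psi) \bmod m$.
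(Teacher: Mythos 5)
Your proposal is correct in substance, but takes a different and somewhat heavier route than the paper, and contains two slips worth flagging. First, Lemma~\ref{lem:b_p_infty} does \emph{not} always give $a,b<0$: case~(ii) is $B_{p,\infty}=\left(\frac{-1,\,p}{\Q}\right)$ with $b=p>0$. It is a true fact that every definite quaternion algebra over $\Q$ has a representation $H(a,b)$ with $a,b<0$, but that is a separate statement, not what the cited lemma provides; as written, your coordinatewise computation does not go through directly for $p\equiv 3\pmod 4$ using the paper's chosen basis. Second, in your alternative argument you say $\End(E)$ embeds in a quaternion algebra \emph{split} at $\infty$—it should be \emph{ramified} at $\infty$: splitting at $\infty$ would give $M_2(\R)$, whose norm form is indefinite, which is the opposite of what you need. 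The parenthetical you give (positive definiteness of $\Nrd$ on $\Z+\Z\psi$) is the correct justification.

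By contrast, the paper avoids the quaternion algebra entirely: it splits into the case $\psi\in\Z$ (handled directly) and the case $\psi\notin\Z$, where $\Z[\psi]$ is an order in a quadratic imaginary field, so the characteristic polynomial $x^2-\tr(\psi)x+\norm(\psi)$ has negative discriminant and $|\tr(\psi)|\leq 2\sqrt{\norm(\psi)}$. Your second, ``alternative'' argument is essentially this same proof, rephrased via complex conjugate roots. Your first argument reaches the same positive-definiteness input by an explicit coordinate computation in $H(a,b)$, which is more work and (as written) requires fixing the sign issue above. One concrete advantage of the paper's formulation is that it applies uniformly to ordinary and supersingular $E$---the appendix states the lemma for an arbitrary elliptic curve over $\F_q$---whereas your coordinatewise argument presupposes the supersingular case.
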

\begin{proof} If $\psi$ is multiplication by some integer, then its
  characteristic polynomial is $x^2 \pm 2nx + n^2$, with $n \in \mathbb{N}$. Then
  $|\tr(\psi)| = 2n$, $\norm(\psi) =n^2$, and the statement of the
  lemma holds.

  If $\psi$
  is not multiplication by an integer, then $\Z[\psi]$ is an order in
  the ring of integers $\calO_K$ for some quadratic imaginary number
  field $K$. Hence we can fix an embedding $\iota: \Z[\psi]
  \hookrightarrow \calO_K$. Since $\iota(\psi)$ is imaginary, its
  characteristic polynomial $x^2 - \tr(\psi)x + \norm(\psi)$ must have
  discriminant $<0$, so $|\tr(\psi)| \leq 2\sqrt{\norm(\psi)}$. 
\end{proof}

As in Schoof's algorithm, we begin by looking for a bound $L$ such that
\begin{equation}\label{eq: N as product}
N:=\prod_{\substack{m\leq L \mbox{ prime } \\ m\neq 2,p}} m > 2\norm(\psi) = 2\ell^e,
\end{equation}
where the last equality follows from the fact that the cycle
corresponding to $\psi$ in the isogeny graph has length $e$, so
$\norm(\psi)=\ell^e$. By the Prime Number Theorem, we can take
$L=O(\log p)$ and there are $O(\log p / \log\log p)$ many primes less
than $L$. 


Let $m$ be a prime. Any $\psi \in \End(E)$ induces an endomorphism $\psi_m \in \End(E[m])$; if $\psi_m$
has characteristic polynomial $x^2 - t_mx + n_m$, then $t_m \equiv
\tr(\psi) \pmod{m}$. After computing $t \pmod{m}$ for each $m < L$, we
can compute $t \pmod{N}$ using the Chinese Remainder Theorem. The
bound in Lemma~\ref{L: boundfortrace} then lets us compute the
value of $\tr(\psi)$. Now, fix one such prime $m$.

\subsubsection{Computation of $\tr(\psi_m)$}\label{sec: explicit
  tmodn} 
Let $t_m \equiv \tr(\psi) \bmod m$. Then the relation
$\psi_{m}^{2}-t_{m}\psi_{m} + n_{m} = 0$ holds in $\End(E[m]):=\End(E)/(m)$.
Here, $n_m \equiv \norm(\psi_m) = \ell^e \bmod m$, with
$0 \leq n_m < m$.


Furthermore, by the results in Section~\ref{sec:velusFormula}, one has
an explicit formula for $\psi_m: E[m] \to E[m]$ by reducing the
explicit coordinates for $\psi$ modulo the ideal $I$ (using the
notation in the discussion before Proposition \ref{P: reduction}),
with $\deg \psi_m = O(m^2)$. Using the addition formulas for $E$, we
can compute the explicit formula for $\psi_m^2 + n_m$, and reduce it
modulo $I$. The main modification to Schoof's algorithm, as it is
described in~\cite[5.1]{SS15}, is to replace the Frobenius
endomorphism on $E[m]$ with $\psi_m$. Having computed $\psi_m^2+n_m$
and $\psi_m$, for $\tau$ with $0\leq\tau \leq m-1$ we compute
$\tau \psi_m$ until
\[
\psi_m^2+n_m = \tau \psi_m 
\]
in $\End(E[m])$. Then $\tau = t_m$. Having computed $t_m$ for sufficiently many primes, we recover $\tr \psi$ using the Chinese remainder theorem. 

\subsubsection{Complexity analysis for computing the trace}

\begin{proposition} \label{P: isogenycomposition} Let $E/\F_q$ be a
  supersingular elliptic curve. Let $\psi$ be an isogeny of $E$ of
  degree $\ell^e$, given as a chain $\phi_1,\ldots,\phi_e$ of
  $\ell$-isogenies, whose explicit formulas are given by V\'elu's
  formula. The explicit formula for $\psi_m$ can be computed in
  $O(edM(d\log q)\log d)$ time, where $d \in \max \{\ell, m^2\}$.
\end{proposition}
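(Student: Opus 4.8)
The plan is to obtain $\psi_m$ by composing the chain $\phi_1,\dots,\phi_e$ one $\ell$-isogeny at a time while reducing modulo the $m$-th division polynomial after \emph{every} composition. This is exactly the idea flagged before Proposition~\ref{P: reduction}: the composite $\ell^e$-isogeny has degree $\Theta(\ell^e)$, but since only its action on $E[m]$ is wanted, at each stage the current partial composite may be replaced by its reduction modulo the ideal $I=\langle f_m(x),\,y^2-(x^3+ax+b)\rangle$, after which all intermediate coordinate data again has $x$-degree $<\deg f_m=O(m^2)=O(d)$ and $y$-degree $\le 1$.

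First I would set up notation: label the chain as $E_1,\dots,E_{e+1}$ with $\phi_i\colon E_i\to E_{i+1}$, so that $\psi=\phi_e\circ\cdots\circ\phi_1\in\End(E_1)$ and $\psi_m$ is the induced endomorphism of $E_1[m]$. For each $i$ compute the $m$-th division polynomial $f_m^{(i)}$ of $E_i$; by the double-and-add recursion each costs $O(M(m^2\log q))=O(M(d\log q))$ (Proposition~\ref{P: complexitydivpoly}), hence $O(e\,M(d\log q))$ in total. Next, from the V\'elu formulas of \S\ref{sec:velusFormula} for $\phi_i$ form the reduced isogeny $(\phi_i)_m\colon E_i[m]\to E_{i+1}[m]$ by reducing its coordinate functions modulo $f_m^{(i)}$ and the Weierstrass relation of $E_i$; by Proposition~\ref{P: reduction} this is $O(M(d\log q)\log d)$ for each $i$. (If the cost of producing the raw V\'elu formulas is to be counted here rather than treated as given, Proposition~\ref{P: veluisogeny} adds a further $O(e\,\ell^4M(\log q))$, subsumed by the final bound in the regime $\ell=O(\log p)$; recall that by Corollary~\ref{cor:isogenydefined} all of this can be carried out over a fixed extension of $\F_{p^2}$ of degree at most $6$.)

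Then comes the main loop. Maintaining the partial composite $\Psi_k:=(\phi_k)_m\circ\cdots\circ(\phi_1)_m$ as its pair of coordinate functions on $E_1[m]=\Spec\F_q[x,y]/I_1$, of $x$-degree $O(d)$ and $y$-degree $\le 1$, pass from $\Psi_{k-1}$ to $\Psi_k=(\phi_k)_m\circ\Psi_{k-1}$ by substituting the coordinate functions of $\Psi_{k-1}$ into a polynomial representative of those of $(\phi_k)_m$ and reducing the result modulo $I_1$. Each such step is a modular composition of polynomials of degree $O(d)$: via Horner's rule it is $O(d)$ multiply-and-reduce steps — each a degree-$O(d)$ polynomial multiplication followed by a reduction modulo $f_m^{(1)}(x)$ and $y^2-(x^3+a_1x+b_1)$ of the kind bounded in \cite[Lemma~9, p.~315]{SS15} — for a cost of $O(d\,M(d\log q)\log d)$ per step. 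Iterating $e$ times and setting $\psi_m=\Psi_e$ yields the claimed bound $O(e\,d\,M(d\log q)\log d)$, which dominates the $O(e\,M(d\log q)\log d)$ contribution of the preparatory stages; the degree bound $\deg\psi_m=O(m^2)$ is Proposition~\ref{P: reduction} applied directly to $\psi$.

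The step needing genuine care — and the one real obstacle — is verifying that reduction after \emph{each} composition truly restores the $O(d)$ degree bound, so that the intermediate data does not grow over the course of the $e$ compositions; this is what turns an a priori $\Theta(\ell^e)$ computation into one polynomial in $e,\ell,m$ and $\log q$. Concretely one must check that substituting a representative of $x$-degree $<\deg f_m^{(1)}$ and $y$-degree $\le 1$ into the next reduced isogeny and reducing once more modulo $f_m^{(1)}(x)$ and $y^2-(x^3+a_1x+b_1)$ again produces such a representative, and one must pin down the modular-composition estimate — the source of the single extra factor of $d$ over the per-isogeny cost in Proposition~\ref{P: reduction}.
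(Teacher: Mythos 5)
Your proposal is correct and follows essentially the same route as the paper's proof: compute each reduced edge $(\phi_k)_m$, then compose them one at a time, reducing modulo the $m$-division-polynomial ideal after every composition so that intermediate degrees stay $O(d)$, with the per-composition cost coming from a modular composition of degree-$O(d)$ polynomials. Your accounting is in fact slightly more careful than the paper's (which quotes $O(dM(d\log q))$ per composition and then, apparently by typo, writes $O(edM(d\log q)\log q)$ for the total where the statement has $\log d$); your version correctly traces the extra $\log d$ to the reduction cost of Proposition~\ref{P: reduction}.
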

\begin{proof}

The
  expression for $\psi_m$ can be computed by computing $(\phi_k)_m$
  for $k=1,\ldots,e$, composing the rational maps, and reducing modulo
  $I$ at each step. 
The calculation of $f \circ g \bmod h$, where
$f, g, h\in \mathbb{F}_q[x]$ are polynomials of degree at most $d$,
takes $O(dM(d\log q))$ elementary operations using the na\"ive
approach.  Thus, computing $e$ of these
compositions, reducing modulo $f_m$ at each step, takes
$O(edM(d\log q) \log q)$ time. 
\end{proof}


We now wish to compute the trace of an endomorphism of $E$
corresponding to a cycle in $G(p,\ell)$. Since the diameter of
$G(p,\ell)$ is $O(\log p)$, we are interested in computing the trace
of a cycle of length $e=O(\log p)$ in $G(p,\ell)$. We are also
interested in the case where $\ell$ is a small prime, so we will take
$\ell=O(\log p)$. The resulting generalization of Schoof's algorithm
runs in time polynomial in $\log p$. We use $f(n)=\tilde{O}(g(n))$ to
mean that there exists $k$ such that $f(n)=O(g(n)\log^kn)$. 

\begin{lemma}\label{lemma:ell-torsion}
Let $E/ \mathbb{F}_{p^2}$ be a supersingular elliptic curve, and let
$\ell \neq p$ be a prime.
Let $n = \max\{ \lceil \log p\rceil, \ell\}$. Then a basis for
$E[\ell]$ can be computed in expected $\tilde{O}(n^4)$ time. 
\end{lemma}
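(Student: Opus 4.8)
The plan is to compute a basis for $E[\ell]$ by first producing a field over which all of $E[\ell]$ is rational, then sampling random $\ell$-torsion points until two independent ones are found. First I would recall from \S\ref{sec:velusFormula} that $E[\ell]$ is defined over an extension $\F_{q'}/\F_{p^2}$ with $[\F_{q'}:\F_{p^2}] = O(\ell^2)$, because the $\ell$-th division polynomial $f_\ell$ has degree $(\ell^2-1)/2$ and the $y$-coordinates lie in an at most quadratic extension of the splitting field of $f_\ell$; thus $\log q' = O(\ell^2 \log p) = O(n^2 n) = O(n^3)$ bits, and in any case $\log q' = O(n \cdot \ell) = O(n^2)$ once we observe $\ell \le n$, so the field arithmetic in $\F_{q'}$ costs $\tilde O(n^2)$ bit operations per operation. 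Constructing $\F_{q'}$ itself — i.e., finding an irreducible polynomial of the right degree over $\F_{p^2}$, or equivalently factoring $f_\ell$ over $\F_{p^2}$ — can be done in expected time polynomial in $\ell$ and $\log p$ by standard (randomized) polynomial factorization, which is $\tilde O(n^k)$ for a small constant $k$; here is where the ``expected'' in the statement enters.

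Next I would produce random points of $E[\ell]$. The clean way is to take a random point $R \in E(\F_{q'})$ by choosing a random $x \in \F_{q'}$, checking that $x^3 + Ax + B$ is a square (cost $\tilde O(\log q')$ via a square-root computation), and lifting to $R = (x,y)$; then set $P = [N/\ell] R$ where $N = \#E(\F_{q'})$. Since $E[\ell] \subseteq E(\F_{q'})$, the subgroup $E(\F_{q'})[\ell^\infty]$ surjects onto $E[\ell]$ under multiplication by the prime-to-$\ell$ part of $N$, so $P$ is a (possibly trivial) $\ell$-torsion point, uniformly distributed over some subgroup containing $E[\ell]$ after one more multiplication by the appropriate $\ell$-power; each such scalar multiplication is $O(\log N) = \tilde O(n^2)$ group operations, hence $\tilde O(n^4)$ bit operations. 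I would first obtain a nonzero $P_1 \in E[\ell]$ (a random attempt succeeds with probability $\ge 1 - 1/\ell \ge 1/2$), then repeatedly sample $P_2$ until $P_2 \notin \langle P_1\rangle$; since $\langle P_1 \rangle$ has index $\ell$ in $E[\ell]$, a random $\ell$-torsion point lies outside it with probability $1 - 1/\ell \ge 1/2$, so an expected $O(1)$ trials suffice, and independence of $\{P_1,P_2\}$ is checked by a Weil pairing computation or simply by verifying $P_2 \notin \{[i]P_1 : 0 \le i < \ell\}$ in $O(\ell)$ group operations. The total expected cost is $O(1)$ scalar multiplications in $E(\F_{q'})$ plus the one-time cost of building $\F_{q'}$ and computing $N$, all of which is $\tilde O(n^4)$.

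The main obstacle is the bookkeeping of the field-extension degree and the resulting bit-complexity estimate: one must be careful that $\log q' = O(\ell \log p)$ rather than $O(\ell^2 \log p)$ in the regime that matters, or absorb the discrepancy into the $\tilde O$ and the exponent $4$. Specifically, a single scalar multiplication by an integer of size $N \approx q'$ costs $O(\log q')$ doublings/additions, each of which is $O(1)$ multiplications in $\F_{q'}$, each costing $M(\log q') = \tilde O(\log q')$ bit operations; so one scalar multiplication is $\tilde O((\log q')^2)$ bit operations, and with $\log q' = \tilde O(\ell \log p) = \tilde O(n^2)$ this is $\tilde O(n^4)$, matching the claim. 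The remaining ingredients — randomized factorization of $f_\ell$ to build $\F_{q'}$, point-counting on $E$ over $\F_{q'}$ (which for a supersingular curve is immediate from Proposition~\ref{prop:endsdefined}: $\#E(\F_{p^{2d}}) = (p^d \mp 1)^2$, so $N$ is known in closed form once the small degree $d \in \{1,2,3,6\}$ and the sign are determined, hence computable in $\tilde O(n)$ time), and extraction of square roots in $\F_{q'}$ — are all standard and fit within the $\tilde O(n^4)$ budget, so the lemma follows.
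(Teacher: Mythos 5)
Your approach is genuinely different from the paper's. The paper avoids constructing the full torsion field $\F_{q'} = \F_{p^2}(E[\ell])$ at all: it uses Corollary~\ref{cor:isogenydefined} to argue that any kernel polynomial of a degree-$\ell$ isogeny from $E$ is defined over $\F_{p^{2d}}$ with $d\in\{1,2,3,6\}$ and has degree $(\ell-1)/2$, hence $f_\ell$ has an irreducible factor $h_1$ over $\F_{p^{2d}}$ of degree dividing $(\ell-1)/2$. It extracts such a small factor directly (distinct-degree factorization, not full factorization), works in $K=\F_{p^{2d}}[x]/(h_1)$ together with a quadratic extension to get one point $P_1$, then discards the kernel polynomial of $\langle P_1\rangle$ and repeats to get an independent $P_2$. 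This gives a deterministic handle on the degree of the field used — it is $O(\ell)$ over $\F_p$ by construction — whereas your approach needs a separate argument for that.

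That is where your proposal has a real gap. You acknowledge that the whole complexity estimate hinges on $\log q' = O(\ell\log p)$ rather than the generic $O(\ell^2\log p)$, and you offer two escape routes: either prove the sharper bound, or ``absorb the discrepancy into the $\tilde O$ and the exponent~$4$.'' The second route does not work: with $\log q' = O(\ell^2\log p) = O(n^3)$, a single scalar multiplication costs $\tilde O((\log q')^2) = \tilde O(n^6)$, which cannot be absorbed. The first route is the only one, and it is not generic: for an arbitrary elliptic curve the $\ell$-torsion field can have degree $\Theta(\ell^2)$ over the base. The sharper bound is a consequence of supersingularity — after base change to $\F_{p^{2d}}$, $d\in\{1,2,3,6\}$, the Frobenius is $[\pm p^d]$ (this is exactly what the proof of Proposition~\ref{prop:endsdefined} establishes), so it acts as a scalar on $E[\ell]$, its image in $\GL_2(\F_\ell)$ has order dividing $\ell-1$ (up to sign), and $[\F_{q'}:\F_{p^2}] = O(\ell)$. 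You invoke Proposition~\ref{prop:endsdefined} only to get the point count $\#E(\F_{p^{2dk}}) = (p^{dk}\mp 1)^2$, but you never draw the conclusion about the degree of $\F_{q'}$, which is what carries the complexity bound. A second, smaller gap: for the cofactor-clearing step to produce a nearly uniform point of $E[\ell]\cong(\Z/\ell)^2$, you need the $\ell$-Sylow subgroup of $E(\F_{q'})$ to be balanced, i.e.\ of the form $\Z/\ell^a\times\Z/\ell^a$; otherwise $\ell^{a-1}\cdot E(\F_{q'})[\ell^\infty]$ may be contained in a single cyclic line and you could never find an independent $P_2$. This again follows from the scalar-Frobenius description, $E(\F_{q'}) = \ker([\pm p^{dk}]-1) \cong (\Z/(p^{dk}\mp 1))^2$, but it should be stated. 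Once these two points are filled in using the supersingular structure you already cite, the rest of your argument — sampling by random $x$, cofactor clearing, an $O(1)$ expected number of trials, and a brute-force or Weil-pairing independence check — is sound and lands within the $\tilde O(n^4)$ budget.
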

\begin{proof}
  Let $E/\F_{p^2}$ be given by a Weierstrass equation
  $y^2=x^3+Ax+B$. Let $f_{\ell}$ be the $\ell$-th division polynomial
  for $E$. We claim that $f_{\ell}$ has an irreducible factor
  $h_1(x)\in \F_{p^{2d}}[x]$ (where $d=1,2,3$ or $6$) of degree
  dividing $(\ell-1)/2$. This follows from the fact that any kernel
  polynomial of a degree $\ell$ isogeny $\phi: E\to E'$ will divide
  $f_{\ell}$, and $\phi$ is defined over $\F_{p^{2d}}$ for $d$ as
  above by
  Corollary~\ref{cor:isogenydefined}.  The
  degree of $h_1(x)$ will divide $(\ell-1)/2$, the degree of the
  kernel polynomial of $\phi$. We can find such an $h_1$ dividing
  $f_{\ell}$ in expected $\tilde{O}(n^4)$ time (see Algorithm 14.8 and
  Exercise 14.15 in~\cite{GJ13}). Set $K:=\F_{p^{2d}}[x]/(h_1(x))$ and
  let $L/K$ be a quadratic extension. We can compute a square root $b$
  of $x^3+Ax+B$ in $L$ in expected $\tilde{O}(n^4)$ time. Then
  $P_1:=(a,b)\in E[\ell]$ has order $\ell$.

To find another basis element, we first compute the kernel polynomial
$g(x)\in \F_{p^2}[x]$ of the isogeny with kernel $\langle P_1\rangle$. The time required for this step is
dominated by the time to compute $P_1$.
Now we find another irreducible factor $h_2(x)$ of $f_{\ell}$ which is
coprime to $g_1(x)$, and repeat the process above to find another
point $P_2=(a_2,b_2)\in E[\ell]$. Then $P_2$ is necessarily
independent of $P_1$, because $a_2$ is not a root of $h_1$ and hence
$P_2$ is not in $\langle P_1 \rangle$. The pair $(P_1,P_2)$ is a basis
of $E[\ell]$.
\end{proof}

\begin{theorem}
  Let $E/\mathbb{F}_{p^2}$ be a supersingular elliptic and let
  $\ell\not=p$ be a prime. A sequence of $e$ many $\ell$-isogenies
  starting from $E$ can be computed in $\tilde{O}(n^5)$ time, where
  $n=\max\{ \lceil \log p\rceil, e, \ell\}$. The output is given as rational
  functions for each $\ell$-isogeny in the chain.
\end{theorem}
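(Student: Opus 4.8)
The plan is to build the chain one $\ell$-isogeny at a time while keeping every curve and isogeny that occurs over a single fixed finite field. I would first fix $q=p^{2d}$ with $d\in\{1,2,3,6\}$ so that, by Proposition~\ref{prop:endsdefined} and Corollary~\ref{cor:isogenydefined}, every supersingular curve over $\F_q$ has all of its endomorphisms defined over $\F_q$ and any $\ell$-isogeny between two supersingular curves over $\F_q$ is again defined over $\F_q$; concretely, once $\End_{\F_q}(E)=\End(E)$ one has $\pi_E=[\pm p^d]$, the same then holds for every curve $\F_q$-isogenous to $E$ (its Frobenius has characteristic polynomial $(x\mp p^d)^2$ and $\End$ of a supersingular curve has no nonzero nilpotents), and the claim follows. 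The point of this reduction is that $\log q=O(\log p)=O(n)$ throughout, and that the auxiliary field extensions introduced below to name torsion points do not accumulate from one step to the next.

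Next I would give the one-step subroutine on input a supersingular $E_i\colon y^2=x^3+A_ix+B_i$ over $\F_q$. (i) Compute the $\ell$-th division polynomial $f_\ell$; by Proposition~\ref{P: complexitydivpoly} and~\cite[\S5.1]{SS15} this costs $O(\ell\,M(\ell^2\log q))=\tilde O(n^4)$. (ii) As in the proof of Lemma~\ref{lemma:ell-torsion}, extract from $f_\ell$ an irreducible factor $h(x)\in\F_q[x]$ of degree dividing $(\ell-1)/2$ — such a factor is the kernel polynomial of some $\ell$-isogeny out of $E_i$ and exists by Corollary~\ref{cor:isogenydefined} — in expected $\tilde O(n^4)$ time via~\cite[Algorithm~14.8, Exercise~14.15]{GJ13}, then adjoin a root $x_0$ of $h$ and a square root of $x_0^3+A_ix_0+B_i$ to get a generator $Q_i$ of a cyclic order-$\ell$ subgroup $F_i$, defined over an extension $L_i/\F_q$ of degree $O(\ell)$, in expected $\tilde O(n^4)$ time. (iii) Apply the explicit V\'elu formulas of~\S\ref{sec:velusFormula} to $F_i=\langle Q_i\rangle$ to get rational functions for $\phi_i\colon E_i\to E_{i+1}:=E_i/F_i$ and a Weierstrass model of $E_{i+1}$; since $F_i$ is $\Gal(L_i/\F_q)$-stable, both $E_{i+1}$ and $\phi_i$ descend to $\F_q$, and since $F_i$ is generated by the single point $Q_i$, which lies in an extension of degree only $O(\ell)$ rather than the whole $E_i[\ell]$, this step costs $O(\ell^3M(\log q))=\tilde O(n^4)$ — a mild sharpening of Proposition~\ref{P: veluisogeny}, no larger than steps (i) and (ii).

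Then I would iterate this subroutine $e$ times, passing $E_{i+1}$ to stage $i+1$, and return the list $(\phi_1,\ldots,\phi_e)$ of rational functions. Each stage costs expected $\tilde O(n^4)$ and there are $e\le n$ of them, giving the asserted expected running time $\tilde O(n^5)$. If one additionally wants the chain to have no backtracking, it suffices to pick at stage $i$ a factor $h$ coprime to the kernel polynomial of $\widehat{\phi_{i-1}}$, which does not affect the estimate.

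The step I expect to be the main obstacle is the field-of-definition bookkeeping behind the first paragraph: a naive implementation would express each successive torsion generator over an extension whose degree multiplies across the $e$ stages, and one must verify — using the identity $\pi_{E_i}=[\pm p^d]$ together with Corollary~\ref{cor:isogenydefined} — that $\phi_i$ and $E_{i+1}$ always descend back to the fixed field $\F_q$, so that each stage really costs $\tilde O(n^4)$ and not that times an uncontrolled power of $\ell$. A minor additional point is that the factorization in step (ii) is Las Vegas, so the resulting bound is necessarily an expected-time bound.
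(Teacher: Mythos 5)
Your proof is correct and follows essentially the same approach as the paper: compute $f_\ell$, extract a generator of a cyclic order-$\ell$ subgroup from a small-degree irreducible factor of $f_\ell$ exactly as in the proof of Lemma~\ref{lemma:ell-torsion}, apply V\'elu's formulas, iterate $e$ times, and control the field of definition once and for all via Proposition~\ref{prop:endsdefined} and Corollary~\ref{cor:isogenydefined}. Your refinement in the V\'elu step is worth keeping: since the generator $Q_i$ lives in an extension of degree only $O(\ell)$ (degree at most $(\ell-1)/2$ for the $x$-coordinate, plus a square root for $y$), the V\'elu computation costs $O(\ell^3 M(\log q))=\tilde O(n^4)$, whereas Proposition~\ref{P: veluisogeny} is stated for an arbitrary $Q\in E[\ell]$ over a degree-$O(\ell^2)$ extension and as written only yields $O(\ell^4 M(\log q))=\tilde O(n^5)$ per isogeny. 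The paper's proof cites that proposition for an $\tilde O(n^4)$ per-step cost, which its statement does not literally supply, so your tighter analysis is what actually makes the per-step bound of $\tilde O(n^4)$, and hence the total of $\tilde O(n^5)$, come out cleanly. One minor wording slip: an irreducible factor $h$ of $f_\ell$ of degree properly dividing $(\ell-1)/2$ is a \emph{proper divisor} of a kernel polynomial rather than being one itself; this does not affect the algorithm, since a root of $h$ together with a square root of the cubic still yields a generator of the corresponding cyclic subgroup, which is all V\'elu needs.
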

\begin{proof}
  To compute an $\ell$-isogeny originating from an elliptic curve $E$,
  we first compute $E[\ell]$ by computing the $\ell$-th division
  polynomial $f_{\ell}$ using a double-and-add approach. This
  computation takes $O(M(\ell^2\log p))=\tilde{O}(n^3)$ time
  (see~\cite[Section 5.1]{SS15}). By Lemma~\ref{lemma:ell-torsion} we
  can compute a basis of $E[\ell]$ in expected $\tilde{O}(n^4)$ time.
  Now we can determine which subgroup of $E[\ell]$ corresponds to the
  next isogeny in the chain, if the path in $G(p, \ell)$ is specified
  by a sequence of $j$-invariants. We note that in this case the
  resulting isogeny is not determined by the $j$-invariants since
  there can be multiple edges in $G(p,\ell)$. We can compute the
  $\ell$-isogeny with kernel generated by $Q\in E[\ell]$ in
  $\tilde{O}(n^4)$ time by Proposition~\ref{P: veluisogeny}. We have
  to do $e$ many of these computations, which gives the total runtime
  of $\tilde{O}(n^5)$.
\end{proof}

\begin{theorem}
Let $\psi$ be an endomorphism of $E/\F_{p^2}$ given as a chain of
$\ell$-isogenies,
\[
\psi = \phi_e\circ \cdots \circ \phi_1,
\]
where each $\phi_k$ is specified by its rational functions and is
defined over $\F_q$. We note that we can take $\F_q$ to be an at most
degree $6$ extension of $\F_{p^2}$.  Let $n= \lceil\log p\rceil$ and
assume $e,\ell=O(n)$. Then  
the modified version of Schoof's algorithm computes $\tr \psi$ in
$\tilde{O}(n^7)$ time. 
\end{theorem}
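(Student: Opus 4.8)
The plan is to run Schoof's algorithm essentially unchanged, with the Frobenius endomorphism replaced by $\psi$ itself on each torsion subgroup, and then to add up the costs of the ingredients already established in this appendix. Two standard facts make the substitution legitimate: for a prime $m\neq p$ one has $E[m]\cong(\Z/m\Z)^2$, and the characteristic polynomial of the induced endomorphism $\psi_m\in\End(E[m])$ is the reduction modulo $m$ of $x^2-\tr(\psi)x+\norm(\psi)$, so $\tr(\psi_m)\equiv\tr(\psi)\pmod{m}$ and computing $\tr(\psi_m)$ for enough primes pins down $\tr(\psi)$ by the Chinese Remainder Theorem. Since $\psi$ is a chain of $e$ isogenies of degree $\ell$ we have $\norm(\psi)=\ell^e$, and by Lemma~\ref{L: boundfortrace}, $|\tr(\psi)|\le 2\norm(\psi)=2\ell^e$, so it suffices to recover $\tr(\psi)\bmod N$ for $N$ as in \eqref{eq: N as product}. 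Finally, by Corollary~\ref{cor:isogenydefined} the whole computation lives over an extension $\F_q$ of $\F_{p^2}$ of degree at most $6$, so $\log q=O(n)$.

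First I would fix the set of small primes. Because $e,\ell=O(n)$, we have $\log_2(\ell^e)=e\log_2\ell=O(n\log n)$, so by Chebyshev's estimate $\sum_{m\le L,\,m\text{ prime}}\log m=\Theta(L)$ the bound $L$ with $\prod_{m\le L,\,m\neq 2,p}m>2\ell^e$ may be taken $O(n\log n)$; there are then $O(n)$ primes $m$ to process, each of size $m=\tilde{O}(n)$. In the notation of Propositions~\ref{P: reduction} and~\ref{P: isogenycomposition}, the relevant degree parameter is $d:=\max\{\ell,m^2\}=\tilde{O}(n^2)$, and one multiplication of polynomials of degree $O(d)$ over $\F_q$ costs $M(d\log q)=\tilde{O}(n^3)$.

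Next I would bound the work done for a single prime $m$. Computing the division polynomials $f_1,\dots,f_m$ costs $O(m\,M(m^2\log q))=\tilde{O}(n^4)$ by Proposition~\ref{P: complexitydivpoly}. Reducing the $e$ given $\ell$-isogenies modulo the ideal $\langle f_m(x),\,y^2-(x^3+ax+b)\rangle$ and composing them, as in Proposition~\ref{P: reduction}, produces explicit rational functions for $\psi_m$ in $O(e\,d\,M(d\log q)\log d)=\tilde{O}(n^6)$ time by Proposition~\ref{P: isogenycomposition}; forming $[n_m]$ from the $f_k$ and computing $\psi_m^2+[n_m]$ reduced modulo that ideal is a further $\tilde{O}(n^5)$ by the same estimates. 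Then, following \S\ref{sec: explicit tmodn}, one computes $\tau\psi_m$ for $\tau=0,1,\dots,m-1$ incrementally---each new value is one group-law addition of the relevant rational maps on $E$ followed by a reduction, costing $\tilde{O}(M(d\log q))=\tilde{O}(n^3)$---and tests equality with $\psi_m^2+[n_m]$ until a match occurs, which gives $\tau=t_m\equiv\tr(\psi)\pmod{m}$; this search costs $\tilde{O}(n^4)$. The per-prime cost is thus dominated by the $\tilde{O}(n^6)$ for $\psi_m$, so over all $O(n)$ primes the total is $\tilde{O}(n^7)$; the closing CRT reconstruction of $\tr(\psi)\bmod N$ together with the sign and size comparison against $2\ell^e$ contribute only $\tilde{O}(n)$, yielding the claimed $\tilde{O}(n^7)$.

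The genuine obstacle---already isolated in Proposition~\ref{P: isogenycomposition}---is keeping the composite isogeny from exploding: a straight composition of $e$ maps of degree $\Theta(\ell^2)$ has degree $\ell^{\Theta(e)}$, which is exponential in $n$, so the entire approach rests on the observation that reducing modulo $f_m$ after each of the $e$ compositions holds the degree at $O(m^2)=\tilde{O}(n^2)$ while still producing a map that agrees with $\psi$ on $E[m]$. Granting that---and the routine checks that every polynomial operation here is on objects of degree $\tilde{O}(n^2)$ over a field $\F_q$ with $\log q=O(n)$---the theorem is just the bookkeeping above.
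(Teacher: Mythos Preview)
Your proof is correct and follows essentially the same approach as the paper's: reduce the isogeny chain modulo $f_m$ to obtain $\psi_m$ in $\tilde{O}(n^6)$ via Proposition~\ref{P: isogenycomposition}, then search for $t_m$ as in Schoof's algorithm, and repeat over $O(n)$ small primes before applying CRT. If anything, you are slightly more careful than the paper in noting that $L=O(n\log n)$ (rather than $O(n)$) and in itemizing the per-prime costs, but these refinements do not alter the argument or the final $\tilde{O}(n^7)$ bound.
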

\begin{proof}
 We follow the steps in our modification of Schoof's algorithm. Since
  $\norm \psi = \ell^e$, we first choose a
  bound $L = O(\log \ell^e)$. 

We can compute $\psi_m$ in time $\tilde{O}(n^6)$
time by Proposition~\ref{P: isogenycomposition}.
  For a prime $m < L$, we compute $\tr \psi_m$, the trace of the
  induced isogeny $\psi_m$ on $E[m]$, by reducing by the $m$-division
  polynomial $f_m$ whenever possible. 

Having computed $\psi_m$ and $\psi_m^2$, 
with the same argument as in the proof of Theorem~10 of~\cite{SS15}, we
can compute $t_m$ in $O((m+\log q)(M(m^2\log q)))$ time. This is
because once $\psi_m$ and $\psi_m^2$ are computed, the algorithm
proceeds the same way as Schoof's original algorithm.  We must repeat this $L=O(\log p) = O( n )$ times.

Once we compute $\tr \psi_m$ for each prime $m\not= p$ less than $L$,
we compute $\tr \psi$ using the Chinese Remainder Theorem.  This step
is dominated by the previous computations. Thus we have a total run
time of $\tilde{O}(n^7)$.
\end{proof}
\vskip 1cm
\newcommand{\etalchar}[1]{$^{#1}$}

\end{document}